\numberwithin{equation}{section}
\begin{document}

\title[Regular singular operators,  zeta-determinants]
{Regular singular Sturm-Liouville operators and their zeta-determinants}

\author{Matthias Lesch}
\address{Mathematisches Institut,
Universit\"at Bonn,
Endenicher Allee 60,
53115 Bonn,
Germany}

\email{ml@matthiaslesch.de, lesch@math.uni-bonn.de}
\urladdr{www.matthiaslesch.de, www.math.uni-bonn.de/people/lesch}

\author{Boris Vertman}
\address{Mathematisches Institut,
Universit\"at Bonn,
Endenicher Allee 60,
53115 Bonn,
Germany}
\email{vertman@math.uni-bonn.de}
\urladdr{www.math.uni-bonn.de/people/vertman}

\thanks{Both authors were partially supported by the 
        Hausdorff Center for Mathematics.}
\thanks{The second author thanks Stanford University for hospitality.}

\subjclass[2000]{58J52; 34B24}

\begin{abstract}
We consider Sturm-Liouville operators on the line segment $[0,1]$ with general
regular singular potentials and separated boundary conditions.  We establish
existence and a formula for the associated zeta-determinant in terms of the
Wronski-determinant of a fundamental system of solutions adapted to the
boundary conditions. This generalizes the earlier work of the first author,
treating general regular singular potentials but only the Dirichlet boundary
conditions at the singular end,  and the recent results  by Kirsten-Loya-Park
for general separated boundary conditions but only special regular singular
potentials.
\end{abstract}

\maketitle
\tableofcontents
\listoffigures

\section{Introduction and formulation of the result}

In this paper we will investigate the zeta-determinant of Sturm--Liouville
operators of the form
\[
H= -\frac{d^2}{dx^2} + \frac{\nu^2-1/4}{x^2} + \frac 1x V(x), \quad \Re \nu
\ge 0, x\in (0,1),
\]
the regularity assumptions on $V$ will be minimal.

Such operators are sometimes also called Bessel operators. $H$ is
the prototype of a differential expression with one regular singularity
and hence it appears naturally in the classical theory of ordinary
differential equations with regular singularities \cite{CodLev:TOD}. 
The physical relevance of $H$ stems from the fact that it arises
when separation of variables is used for the 
radial Schr\"odinger operator in Euclidean space.

Quite recently there has been a lot of interest in the inverse spectral theory
of $H$, see e.g. \cite{KST:WTT}, \cite{KST:IEP}, \cite{Car:BLT} and the
references therein.

Our motivation for looking at the zeta-determinant of $H$ comes from geometry:
Spectral geometry on manifolds with singularities has been initiated by
Cheeger in his seminal papers \cite{Che:SGS}, \cite{Che:SGSR}.  Manifolds with
conical singularities are an important case study for this general programme.
Separation of variables for the Laplacian on a cone leads to an infinite sum
of Bessel type operators like $H$ above.  Recently, there has been a revived
interest in extending the celebrated Cheeger-M\"uller Theorem \cite{Che:ATH},
\cite{Mul:ATT} on the equality of the analytic torsion and the Reidemeister
torsion to manifolds with conic singularities \cite{DaiHua:IRT},
\cite{deMHarSpr:RTA}, \cite{HarSpr:ATC}, \cite{Ver:ZDR}, \cite{Ver:MAR}.

The separation of variables mentioned above leads naturally to the problem
of determining the zeta-determinant of a single regular singular Sturm-Liouville operator
on the line segment $[0,1]$  with separated boundary conditions.
We only make minimal regularity assumptions on the potential. 
Nevertheless, we establish existence and a formula for the
associated zeta-determinant in terms of the  Wronskian of a
fundamental system of solutions adapted to the boundary conditions, see
Theorem \plref{thm:Main} below.

We emphasize that for the calculation of the analytic torsion or the
zeta-determinant on a cone additional considerations are necessary. This
is because on a cone one has to deal with an \emph{infinite} direct sum of operators
like $H$.

The fundamental results of Br\"uning-Seeley in \cite{BruSee:RSA}, \cite{BruSee:RES} guarantee the 
existence of zeta-determinants for regular singular Sturm-Liouville operators with 
Dirichlet boundary conditions at the singularity. However,
loc. cit. require the potential to be of the form $a(x)/x^2$ with $a(x)$ smooth up to $0$. 
For such operators Theorem \plref{thm:Main} was proved in \cite{Les:DRS} by the first author,
generalizing earlier results by Burghelea, Friedlander and Kappeler \cite{BFK:DEB} to the regular singular setting.

The method of \cite{Les:DRS} is limited to the Friedrichs extension at the singularity. 
In a recent series of papers Kirsten, Loya, and Park \cite{KLP:FDG}, \cite{KLP:EEP}, \cite{KLP:UPR} 
were able to calculate the zeta-determinant for an explicit example of a regular singular 
Sturm-Liouville operator with general self-adjoint boundary conditions; 
cf. also the subsequent discussion by the second author in \cite{Ver:ZDR} and
in the appendix to \cite{KLP:EEP}.
Their method, however, is based on an intricate analysis of Bessel functions and is therefore limited to their explicit potential.

The main result of this paper combines and generalizes these two results, however only for scalar valued potentials. 
Since we deal with a rather general class of potentials it is natural
that our method is closer to that of \cite{Les:DRS}. Special functions are used in this paper only implicitly as we
are using the formula from \cite{Les:DRS} for the zeta-determinant of the Friedrichs extension of the regular singular model 
operator $l_\nu=-\frac{d^2}{dx^2}+(\nu^2-1/4)/x^2$.

The paper is organized as follows.  In the remainder of this section  we will
introduce some notation, explain the basic concepts of regularized integrals
and zeta-determinants, and we will formulate our main result.  In Section
\ref{sec:FunSys} we derive the asymptotic behavior of a fundamental system for
$H$, slightly generalizing a result due to B\^ocher \cite{Boc:RSP}.

In Section \ref{s-a} we study the maximal domain of $H$ and its closed
extensions with separated boundary conditions. Let $H(\theta_0,\theta_1)$
be such an extension, $\theta_0,\theta_1$ stand for the boundary conditions at
$0,1$ resp. We give criteria under which it is possible to factorize
$H(\theta_0,\theta_1)$ into a product $D_1D_2$ of closed extensions of
first order regular singular differential operators.
We prove a comparison result for the Wronskians
of normalized fundamental solutions for $D_1D_2$ and $D_2D_1$. 

In Section \ref{sec:ResolventExpansion} we discuss the asymptotic expansion of
the resolvent trace.  We start with the Friedrichs extension. The resolvent of
the Friedrichs extension $L_\nu$ of the regular singular model operator
$l_\nu$ is explicitly known and rather well--behaved with respect to
perturbations of the form $X\ii V$. From \cite{BruSee:RSA} we only use the
result that the resolvent of the model operator $L_\nu$ has a complete
asymptotic expansion. The expansion of the resolvent of $L_\nu+X\ii V$
then follows by a perturbation analysis. Boundary conditions other than the
Friedrichs extension at $0$ are more subtle since the resolvent does not
absorb high enough negative powers of $x$. For the resolvent of general
boundary conditions we therefore use the factorization results of Section
\plref{s-a}.  In addition one needs to
treat compactly supported $L^2$-perturbations of factorizable operators.
For this we employ a standard method of pasting together local resolvents, 
cf. \cite[Appendix]{LMP:CCC}.

In Section \ref{q-section} we derive a variational formula for the dependence of the zeta-determinant
under variation of the potential. The method is well--known \cite{BFK:DEB}, \cite{Les:DRS}. However,
due to the low regularity assumptions on the potential and due to the singularity of the operator the
analysis becomes a little delicate. In particular we have to analyze the dependence of a normalized fundamental
system (and its asymptotic behavior near $0$) on the parameter.
At the end of Section \plref{q-section} we compile the established results to
a proof of the main Theorem \ref{thm:Main}.

\section*{Acknowledgements}

We are indebted to the referees who went far beyond the call of duty and
provided a comprehensive list of suggestions for improvements. We think
the paper has benefited a lot from these suggestions and we would like
to express our gratitude.

\subsection{Function and Distribution Spaces}\label{ss:notation}
Following the requests of several of the referees we are going to specify in detail
the notation for function and distribution spaces used throughout the paper. 

Let $I\subset \R$ be an interval, 
which may be of any of the possible forms $(a,b), [a,b), (a,b]$ or $[a,b]$ for
real numbers $a<b$. Let $I^\circ=I\setminus \{a,b\}$
denote the interior of $I$. 

For a map $f:I\to E$ into some vector space $E$ the support of $f$, denoted by $\supp f$, is defined as the closure in $I$ of $\{x\in I\mid f(x)\neq 0\}$
\begin{align}\label{support}
 \supp f:=\overline{\{x\in I\mid f(x)\neq 0\}}^{\, I}.
\end{align}
$\supp f$ is always closed in $I$ but not necessarily compact, since $I$ might be non-compact itself. 

For spaces of continuous, respectively differentiable complex-valued functions
we use the standard notation $C(I), C^k(I), C^{\infty}(I)$, cf. e.g. \cite[Sec. 1.1, 1.2]{Hoe:TAL}. 
The space $C^k_0(I), 0\leq k\leq \infty,$ denotes the subspace of those $f\in C^k(I)$ with 
compact support. 

The space $C^{\infty}_0(I^\circ)$ carries a natural locally convex topology and its dual space $\dom'(I^\circ)$ is called the space of distributions on $I^\circ$. 

For $T\in \dom'(I^\circ)$ one can define $\supp T$ \cite[Sec. 2.2]{Hoe:TAL}. 
For an \emph{arbitrary} subset $A\subset \R$ one now writes 
$\mathscr{E}'(A)=\{T\in \dom'(\R)\mid \supp T\subset A\}$, cf. \cite[Sec. 2.3]{Hoe:TAL}. For the half open interval $I=(a,b]$, e.g., $T\in \mathscr{E}'((a,b])$ if there is a 
$\delta >0$ such that $\supp T\subset (a+\delta, b]$.

For distributions it also makes sense to talk about restrictions. If $J\subset I$ are intervals and $T\in \dom'(I^\circ)$, we put $T|_J:=T\restriction C^{\infty}_0(J^\circ)$.

Let $F$ be a map which assigns to each interval $I\subset \R$ a subspace $F(I)\subset \dom'(I^\circ)$. Furthermore, assume that $F$ is compatible with restrictions in the following sense:
if $J\subset I$ are intervals and $f\in F(I)$, then $f|_J \in F(J)$. Then $F_\loc(I)$ denotes the space of $T\in \dom'(I^\circ)$ such that $T|_K\in F(K)$ for each compact interval
$K\subset I$. Furthermore, $F_\comp(I):=F_\loc(I)\cap \mathscr{E}'(I)$. 

\begin{example}
For an interval $I\subset \R$ we denote by $L^p(I), 1\leq p \leq \infty,$ the
Banach space of $p$-summable (equivalence classes modulo equality almost everywhere) 
functions with respect to Lebesque measure; for $f\in L^p(I)$ the norm is given by 
$$\|f\|_{L^p}:=\Bigl(\int_I |f(x)|^p dx\Bigr)^{1/p}.$$ 
$L^p(I)$ is naturally embedded into $\dom'(I^\circ)$ by identifying $f\in L^p(I)$ 
with the distribution 
\begin{align}\label{distribution}
 C^{\infty}_0(I^\circ)\owns \phi \mapsto \int_I f\cdot \phi;
\end{align}
needless to say that \eqref{distribution} is independent of the choice of a function 
representative of the class $f$. The support of $f\in L^p(I)$ is now defined as the 
\emph{closure in $\overline{I}$} of the support of the corresponding distribution in 
$\dom'(I^\circ)$. For continuous functions $C(I)\subset L^p(I)$ (each $L^p$-class has 
at most one continuous representative) the latter definition of support coincides with
\eqref{support}, assuming that $I$ is closed.

The assignment $I\mapsto L^p(I)$ is an example for the map $F$ discussed
above. Hence, $L^p_\comp(I)$ and $L^p_\loc(I)$ are
defined. Note that although $L^p(I)=L^p(\overline{I})$, we only have 
$$L^p_\comp(I) \subset L^p_\comp(\overline{I}), \quad
L^p_\loc(\overline{I}) \subset L^p_\loc({I}).$$
\end{example}

Sobolev spaces will only be used in the Hilbert space setting $p=2$. We write $H^k(I)$ for the Sobolev space $W^{k,2}(I)$ of those $f\in L^2(I)\subset \dom'(I^\circ)$, for which all weak distributional derivatives $\partial^jf, 1\leq j \leq k,$ taken a priori in $\dom'(I^\circ)$, are actually in $L^2(I)$.

\subsubsection{The Schatten Ideals} 
For a Hilbert space $\mathcal{H}$ we denote by $\mathcal{B}(\mathcal{H})$ the space of bounded 
and by $\mathcal{K}(\mathcal{H})$ the space of compact operators on $\mathcal{H}$. 
For $1\leq p<\infty$ let $\mathcal{B}^p(\mathcal{H})\subset \mathcal{K}(\mathcal{H})$ be the 
von Neumann-Schatten ideal of $p$-summable operators, cf. e.g. \cite[Sec. 3.4]{Ped:AN}. 
For $T\in \mathcal{B}^p(\mathcal{H})$ the $p$-norm is given by
\[
\|T\|_p:=\bigl(\Tr(T^*T)^{p/2}\bigr)^{1/p}=
\Bigl(\sum_{\lambda\in \spec T^*T}\lambda^{p/2}\Bigr)^{1/p}.
\]
Tr denotes the trace \cite[Sec. 3.4]{Ped:AN}. We will only need $p=1$ and $p=2$. Operators in 
$\mathcal{B}^1(\mathcal{H})$ are called trace class operators and elements of 
$\mathcal{B}^2(\mathcal{H})$ are called Hibert-Schmidt operators. To avoid possible confusion 
with the $L^p$-norm of functions, we write $\|\cdot\|_{\tr}$ for the trace norm 
$\|\cdot \|_1$ and $\|\cdot\|_{\HS}$ for the Hilbert-Schmidt norm $\|\cdot \|_2$.  

\subsubsection{Regularized Integrals}
Let us briefly recall the partie finie regularization, cf. \cite[Sec. 2.1]{Les:OFT}, 
\cite{Les:DRS} and \cite{LesTol:DOD}, of integrals on $\R_+:=[0,\infty)$. 
Let $f:(0,\infty)\to\C$ be a locally integrable function. Assume furthermore, that 
for $x\geq x_1$ we have a representation
\begin{align}\label{infty-asympt}
f(x)=\sum_{j=1}^N f^{\infty}_j x^{\A_j}+g(x),
\end{align}
with real numbers $\A_j$, numbered in descending order with $\A_N=-1$, and $g\in L^1[x_1,\infty)$. 
Then
\begin{equation}\label{regint-infty}
\begin{split}
 \int_{x_1}^R &f(x) dx\\
    &=:\sum_{j=1}^{N-1}\frac{f^{\infty}_j}{\A_j+1}R^{\A_j+1}+f^{\infty}_N\log R + 
       \regint_{x_1}^{\infty}f(x)dx +o(1),  \textup{ as }  R\to \infty.
\end{split}
\end{equation}
$o(1), R\to \infty,$ is the usual Landau notation for a function of $R$ whose
limit as $R\to \infty$ is zero; here we have explicitly
$o(1)=\int_{R}^{\infty}g$. The regularized integral
$\regint_{x_1}^{\infty}f(x)dx$ is therefore defined as the constant term in
the asymptotic expansion of $\int_{x_1}^Rf(x)dx$ as $R\to \infty$. 

If for $0<x\leq x_0$ we have a representation
\begin{align}\label{zero-asympt}
 f(x)=\sum_{j=1}^M f^{0}_j x^{\beta_j}+h(x),
\end{align}
with real numbers $\beta_1<\beta_2<\cdots <\beta_M=-1$, and $h\in L^1[0,x_0]$, then
\begin{align}\label{regint-zero}
 \begin{split}
 \int_{\delta}^{x_0} &f(x)dx  \\
     &=:-\sum_{j=1}^{M-1}\frac{f^{0}_j}{\beta_j+1}\delta^{\beta_j+1}-
      f^{0}_M\log \delta + \regint_{0}^{x_0}f(x)dx +o(1), \textup{ as } \delta\to 0,
     \end{split}     
\end{align}
and the regularized integral $\regint_{0}^{x_0}f(x)dx$ is defined as the
constant term in the asymptotic expansion of $\int_{\delta}^{x_0}f(x)dx$ as
$\delta\to 0$. 

Now assume that $f$ satisfies \eqref{infty-asympt} and \eqref{zero-asympt}.
Since $f$ is locally integrable, it is clear that \eqref{regint-infty} holds
for any $x_1>0$ and \eqref{regint-zero} holds for any $x_0>0$. One then puts
for any $c>0$
\begin{align}\label{regint}
\regint_{0}^{\infty}f(x)dx:=\regint_{0}^{c}f(x)dx+\regint_{c}^{\infty}f(x)dx,
\end{align}
and in fact the right hand side is independent of $c>0$.

\subsection{The zeta-determinant}\label{ss:ZetaDeterminant}
Let $H$ be a closed not necessarily self-adjoint operator acting on some Hilbert space with $\spec(-H)\cap \R_+$ finite, $0\not\in\spec H$.
We assume that the resolvent of $H$ is trace class,  and that for $z\in\R, z\ge z_0>\max \bigl(\spec(-H)\cap \R_+\bigr)$
\begin{equation}\label{eq:DetExp}
   \Tr( H+z)\ii= \frac{a}{\sqrt{z}}+\frac bz+R(z)
\end{equation} 
with
\begin{align}
&\lim_{z\to\infty} z R(z)=0,\label{eq:DetConA}\\
&\int_{z_0}^\infty |R(z)| dz<\infty.\label{eq:DetConB}
\end{align}
\begin{figure}[t]
\begin{center}
\begin{tikzpicture}
\draw (-2.5,0) -- (5,0);
\draw[->] (0,-2) -- (0,3);

\node at (1.5,0) {$\bullet$};
\node at (2.5,0) {$\bullet$};
\node at (3.5,0) {$\bullet$};
\node at (-1,0) {$\bullet$};
\node at (-2,0) {$\bullet$};
\node at (-3,0) {$\cdots$};

\node at (-1,1) {$\bullet$};
\node at (-1,2) {$\bullet$};
\node at (-2,1) {$\cdots$};
\node at (-2,2) {$\cdots$};
\node at (2,1) {$\bullet$};
\node at (1,1) {$\bullet$};
\node at (1,2) {$\bullet$};

\node at (0,1) {$\bullet$};

\node at (-1,-1) {$\bullet$};
\node at (-2,-1) {$\cdots$};

\draw[very thick] (5.8,0.1) node[below=10pt] {\large{$\Gamma$}} -- (6,0);
\draw[very thick] (5.8,-0.1) -- (6,0);


\draw[very thick] (0,0) -- (0.5,0);
\draw[very thick] (0.5,0) .. controls (1,0) and (1,0.5) .. (1.5,0.5);
\draw[very thick] (1.5,0.5) -- (3.5,0.5);
\draw[very thick] (3.5,0.5) .. controls (4,0.5) and (4,0) .. (4.5,0);
\draw[very thick] (4.5,0) -- (7,0);

\end{tikzpicture}
\end{center}
\caption{The contour of integration $\Gamma$.}
\label{fig:Contour}
\end{figure}
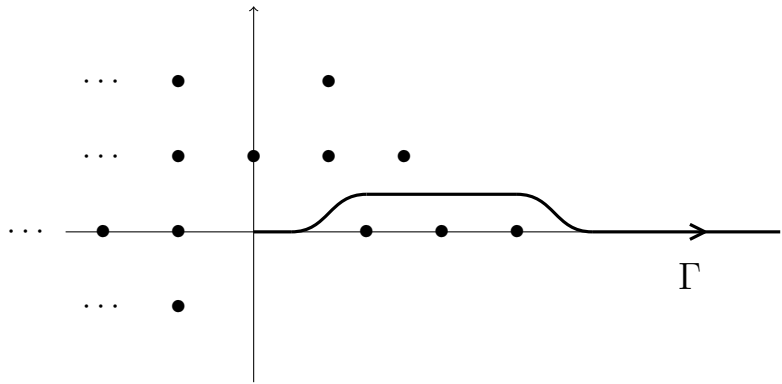

Let $\Gamma$ be the contour as sketched in Figure \plref{fig:Contour}, 
where the bullets indicate the eigenvalues of $-H$.
Fix a branch of the logarithm in the simply connected domain $\C\setminus \{-\Gamma\}$. 
Note that the previous definition \eqref{regint} of the regularized integral can easily be adapted 
to functions defined on the contour $\Gamma$, since there are $0<x_0<x_1<\infty$ such that $[0,x_0]$ 
and $[x_1,\infty)$ are contained in (the image of) $\Gamma$. Consider for fixed $s\in \C$ the function 
$$f_s(x):=x^{-s}\, \Tr( H+x )\ii.$$
In view of \eqref{eq:DetExp} and \eqref{eq:DetConB} it satisfies \eqref{infty-asympt} if $\Re s\geq 0$. 
Furthermore, since $H$ is assumed to be invertible, the function $x\mapsto \Tr(H+x)^{-1}$ is smooth 
up to $x=0$ and its Taylor expansion at $x=0$ shows that $f_s$ satisfies \eqref{zero-asympt} for all $s\in \C$. 

Exploiting the definition of $\regint_{\Gamma}$ it is now not hard to see,
cf. \cite[(2.30)]{LesTol:DOD}, that for $1<\Re s<2$ the zeta-function is given by 
\begin{equation}\label{eq:ZetaFunction}
     \zeta_H(s):=\sum_{\gl\in\spec H\setminus\{0\}} \gl^{-s}= \frac{\sin \pi s}{\pi} 
     \regint_\Gamma x^{-s} \, \Tr(H+x)\ii dx.
\end{equation}

Furthermore using the asymptotic expansions as $x\to \infty$ and $x\to 0$ of 
$x^{-s}\, \Tr(H+x)^{-1}$ one deduces that the right hand side of \eqref{eq:ZetaFunction} 
extends meromorphically to the half plane $\Re s > 0$, \cite[Prop. 2.1.2]{LesTol:DOD}. 
The identity \eqref{eq:ZetaFunction} persists except for the poles of the function 
$s\mapsto \frac{\pi}{\sin \pi s} \zeta_H(s)$. Thanks to \eqref{eq:DetConB} the function $\zeta_H$ is
differentiable from the right at $s=0$ and one puts
\begin{equation}\label{eq:ZetaPrimeZero}
\log\detz H:=-\zeta_H'(0)=- \regint_\Gamma \Tr(H+x)\ii dx.
\end{equation}

$\detz H$ is called the \emph{zeta-regularized determinant} of $H$. For non-invertible $H$
one puts $\detz H=0$. With this setting the function $z\mapsto \detz (H+z)$ is an entire holomorphic
function with zeroes exactly at the eigenvalues of $-H$. The multiplicity of a zero $z$ equals the
algebraic multiplicity of the eigenvalue $z$.

\subsection{A regular singular operator}
We now introduce the class of operators we are going to
study in this paper. Put
\begin{equation}\label{eq:ModelOperator}
l_\nu= -\frac{d^2}{dx^2} + \frac{\nu^2-1/4}{x^2}, \quad \Re \nu \ge 0,
\end{equation}
acting as an operator in the Hilbert space $L^2[0,1],$ a priori with domain $C^{\infty}_0(0,1)$. 
We will study perturbations of $l_\nu$ of the form
\begin{equation}\label{eq:PerturbationOperator}
H  = l_\nu+X\ii V,
\end{equation}
with suitable conditions on the operator $V$ to be specified below. $X$ denotes the function $X(x)=x$. 
We view $V$ resp. $X^{-1}V$
as a multiplication operator on functions on the unit interval. In order not to overburden the notation
we will in general not distinguish between a function $f$ and the operator of multiplication by $f$.

\begin{defn}\label{def:FunctionSpaces}
\textup{1. }
For an interval $I\subset \R$ we denote by $AC^k(I), k\geq 1, $ the space of 
$f\in C(I)\subset \dom'(I^\circ)$ such that  
$\partial^jf\in C(I), 0\leq j \leq k-1,$ and $\partial^kf\in L^1_\loc(I)$. $AC^1(I)=AC(I)$ 
is the well--known space of absolutely continuous functions. 
Note that for this definition it matters whether a boundary point $p\in \partial I$ belongs to $I$ or not.
 
\textup{2.} Denote by $\sV_\nu$ the space of those $V\in L^2_\loc(0,1)$ such that
\begin{align}
& V \cdot\llog\in L^1[0,1],            \quad \textup{if } \nu \neq 0, \label{eq:ML1003052}\\
& V \cdot\llogv{2}\in L^1[0,1],        \quad \textup{if } \ \nu=0.  \label{eq:ML1003053}
\end{align}
A natural norm on $\sV_\nu$ is given by 
\begin{align}
\|V\|_{\sV_\nu} &= \bigl\| \llog V\bigr\|_{L^1}, \quad \textup{if } \nu \neq 0, \\
\|V\|_{\sV_\nu} &= \bigl\| \llog^2 V\bigr\|_{L^1}, \quad \textup{if } \nu=0.
\end{align}
$\sV_\nu$ is a Fr\'echet space with seminorms $\|\cdot\|_{\sV_\nu}$ and $\|V_{\bigl|[1/n,1-1/n]}\|_{L^2}, n=2,3,\ldots$.

\textup{3.} Finally, let $\sA$ be the space of those $f\in AC^2(0,1)$ such that
$f',X f''\in \llog\ii L^1[0,1]$.
\end{defn}
Some of the results will hold under the weaker hypothesis $V\in \lsV_\nu\supset \sV_\nu$.
Unless said otherwise, function spaces consist of complex valued functions.

In Section \plref{sec:FunSys} we will prove the following refinement
of a Theorem of B\^{o}cher \cite{Boc:RSP} (Theorem \ref{5-a-t} and Proposition \plref{les-4}):

\begin{theorem} Let $V\in \sV_{\Re \nu}, H=l_\nu+X\ii V$, $\Re \nu\ge 0$, and let 
$\nu_1=\nu+1/2, \nu_2=-\nu+1/2$ be the characteristic exponents of the regular singular
point $0$ of the differential equation $H g=0$. Then there is a fundamental system $g_1,g_2$ of solutions 
to the equation $H g=0$ such that
\begin{align}
 g_1(x) =&  x^{\nu_1}\widetilde{g}_1(x),\label{eq:g1}\\
 g_2(x) =& \begin{cases} 
             -\frac{1}{2\nu}x^{\nu_2}\widetilde{g}_2(x),  &\textup{if } \nu \neq 0, \\
             \sqrt{x}\log (x) \widetilde{g}_2(x),        &\textup{if } \nu=0,
            \end{cases}
            \label{eq:g2}
\end{align}
where $\tilde g_j\in\sA$. 
\end{theorem}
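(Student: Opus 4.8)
The plan is to construct the fundamental system by the method of successive approximations (Picard iteration) adapted to the regular singular point, following the classical strategy underlying B\^ocher's theorem but tracking the precise regularity dictated by the hypothesis $V\in\sV_{\Re\nu}$. Write $H=l_\nu+X^{-1}V$, so the equation $Hg=0$ reads
\begin{equation*}
 -g''+\frac{\nu^2-1/4}{x^2}g = -\frac{1}{x}V(x)g.
\end{equation*}
The homogeneous model equation $l_\nu g=0$ has the explicit fundamental system $x^{\nu_1}=x^{\nu+1/2}$ and, when $\nu\ne0$, $x^{\nu_2}=x^{-\nu+1/2}$ (and $\sqrt{x}$, $\sqrt{x}\log x$ when $\nu=0$), with Wronskian a nonzero constant. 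First I would write down the Volterra-type integral equation that a solution $g_1$ with leading term $x^{\nu_1}$ must satisfy, namely
\begin{equation*}
 g_1(x)=x^{\nu_1}-\int_0^x G(x,t)\,\frac{1}{t}V(t)\,g_1(t)\,dt,
\end{equation*}
where $G(x,t)$ is the Green's kernel built from the model solutions, $G(x,t)=\frac{1}{2\nu}\bigl(x^{\nu_1}t^{\nu_2}-x^{\nu_2}t^{\nu_1}\bigr)$ for $\nu\ne0$ and the corresponding confluent kernel for $\nu=0$. The point is that this kernel satisfies $|G(x,t)|\le C\,x^{\nu_1}t^{-\nu_1}\cdot(\text{log factor})$ for $0<t\le x$, where the logarithmic factor is exactly what forces the weight $\llog$ (resp. $\llog^2$) appearing in the definition of $\sV_\nu$.

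Next I would substitute $g_1(x)=x^{\nu_1}\widetilde g_1(x)$ and derive the fixed-point equation for $\widetilde g_1$: it becomes $\widetilde g_1(x)=1-\int_0^x K(x,t)\,\widetilde g_1(t)\,dt$ with a kernel $K(x,t)=x^{-\nu_1}G(x,t)t^{-1}V(t)$ that, by the kernel estimate above, is dominated in absolute value by $C\,(1+|\log t|)\,|V(t)|$ (or $C\,(1+\log^2 t)|V(t)|$ when $\nu=0$) uniformly in $x\in(0,1]$ with $t\le x$. Hypotheses \eqref{eq:ML1003052}--\eqref{eq:ML1003053} say precisely that this majorant lies in $L^1[0,1]$, so the standard Neumann series $\widetilde g_1=\sum_{n\ge0}\widetilde g_1^{(n)}$ with $\widetilde g_1^{(0)}\equiv1$ and $\widetilde g_1^{(n+1)}(x)=-\int_0^x K(x,t)\widetilde g_1^{(n)}(t)\,dt$ converges absolutely and uniformly on $[0,1]$, with $\|\widetilde g_1^{(n)}\|_\infty\le \|K\|_{L^1}^n/n!$. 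This gives a continuous solution $\widetilde g_1$ on $[0,1]$ with $\widetilde g_1(0)=1$. To upgrade $\widetilde g_1\in C[0,1]$ to $\widetilde g_1\in\sA$, I would differentiate the integral equation: $\widetilde g_1'$ and $X\widetilde g_1''$ are then expressed through integrals of $K$ against $\widetilde g_1$ plus boundary terms, and the same $L^1$ kernel bound — now carrying one extra factor of $\llog$ from differentiating the $t^{-\nu_1}$ or $\log$ pieces of the kernel — shows $\widetilde g_1',X\widetilde g_1''\in\llog^{-1}L^1[0,1]$, which is exactly the defining condition of $\sA$. The construction of $g_2$ is analogous: in the case $\nu\ne0$ one solves the Volterra equation with leading term $x^{\nu_2}$ (now integrating from $x$ down, or equivalently using the complementary part of the Green's kernel) to get $g_2(x)=-\frac{1}{2\nu}x^{\nu_2}\widetilde g_2(x)$ with $\widetilde g_2\in\sA$, $\widetilde g_2(0)=1$; in the confluent case $\nu=0$ the second model solution is $\sqrt x\log x$, and the same iteration with leading term $\sqrt x\log x$ produces $g_2(x)=\sqrt x\log x\,\widetilde g_2(x)$. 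Finally, normalizing the Wronskian: a direct computation shows $W(g_1,g_2)=g_1g_2'-g_1'g_2$ tends to the constant value coming from the model ($=1$ for $\nu\ne0$ with this normalization, $=1$ for $\nu=0$) as $x\to0$, and since $W$ is constant in $x$ for solutions of a second-order equation with no first-order term, $g_1,g_2$ are genuinely independent.

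The main obstacle — and the reason this is a genuine refinement rather than a citation of B\^ocher — is the borderline regularity: B\^ocher's classical result assumes $V$ bounded or continuous, whereas here $V$ is only in $L^2_\loc$ with a logarithmically weighted $L^1$ condition near $0$, and for $\nu=0$ the two characteristic exponents coincide, producing the extra $\log x$ in the second solution and correspondingly demanding the $\llog^2$ weight. Making the kernel estimates sharp enough that the $L^1[0,1]$ majorant is controlled by exactly $\|\llog\,V\|_{L^1}$ (resp. $\|\llog^2 V\|_{L^1}$), with no loss, and then tracking how one and two derivatives of the kernel each cost one additional logarithmic factor so that the output lands precisely in $\sA$, is the delicate bookkeeping at the heart of the argument; the case $\Re\nu>0$ with $\nu\notin\R$ additionally requires care that $x^{\nu_1},x^{\nu_2}$ are interpreted via $\Re\nu_j$ in all the estimates, but the fixed-point scheme itself is insensitive to this. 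I would present the $\nu\ne0$ case in full and indicate the modifications for $\nu=0$, since the structure of the iteration is identical and only the kernel and the weight change.
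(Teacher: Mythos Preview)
Your treatment of $g_1$ via the Volterra equation $\tilde g_1=(I-K_\nu V)^{-1}\mathbf{1}$ is exactly the paper's approach, and your outline of how to upgrade $\tilde g_1\in C[0,1]$ to $\tilde g_1\in\sA$ by differentiating the integral equation is also correct in spirit: the paper carries this out via an explicit formula for $\phi'$ (equation \eqref{eq:ML1003055}) combined with an elementary integration lemma (Lemma~\ref{les-2}) that converts the hypothesis $V\llog\in L^1$ into the conclusion $\phi',X\phi''\in\llog^{-1}L^1$.

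Where your proposal diverges from the paper, and where there is a genuine gap, is the construction of $g_2$. You write that ``the construction of $g_2$ is analogous: one solves the Volterra equation with leading term $x^{\nu_2}$ (now integrating from $x$ down)'' and asserts $\tilde g_2(0)=1$. But the analogy fails precisely because $x^{\nu_2}$ is the \emph{more singular} solution when $\Re\nu>0$. If you integrate from $0$ to $x$ as for $g_1$, the kernel acting on a function of size $t^{\nu_2}$ produces an integrand $\sim t^{-2\Re\nu}|V(t)|$, which is not integrable under the hypothesis $V\in\sV_{\Re\nu}$. If instead you integrate from $x$ to some $x_0>0$, the kernel $\frac{1}{2\nu}\bigl((x/t)^{2\nu}-1\bigr)$ is indeed bounded for $t\ge x$ and the Neumann series converges, but the fixed point it produces satisfies $\tilde g_2(x_0)=1$, not $\tilde g_2(0)=1$; worse, one computes $W(g_1,g_2)=\tilde g_2(0)=1-\frac{1}{2\nu}\int_0^{x_0}V\tilde g_2$, which need not be nonzero, so the two functions need not be linearly independent. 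Renormalizing and proving $\tilde g_2(0)\ne0$ would require additional argument that you have not supplied.

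The paper sidesteps this entirely by constructing $g_2$ via reduction of order: once $g_1$ is in hand and nonvanishing on $(0,x_0]$, one sets
\[
g_2(x)=-g_1(x)\int_x^{x_0}g_1(y)^{-2}\,dy,
\]
which automatically gives $W(g_1,g_2)\equiv1$ and, after an integration by parts (equation \eqref{les-nr2}), the representation $\tilde g_2=\tilde g_1\cdot f$ with $f$ explicit enough that the $\sA$ regularity can be read off from that of $\tilde g_1$ together with Lemma~\ref{les-2}. This also handles the $\nu=0$ case and the case $2\nu\in\mathbb{N}$ uniformly, whereas a direct iteration for $g_2$ would require separate treatment of possible logarithmic resonances. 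The paper's route is both shorter and avoids the independence issue altogether.
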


The spectra and fundamental system of solutions to Bessel type
Sturm-Liouville differential expressions on finite intervals have also been
studied 
(mainly in connection with the inverse spectral problem) in a number of recent publications \cite{AHM:ISP}, \cite{Car:BLT}, 
\cite{EveKal:BDE}, \cite{KST:IEP}, \cite{KST:WTT} and \cite{Ser:ISP}.

\subsection{Separated boundary conditions}\label{ss:SepBouCon}
Denote by $H_0$ the differential expression $H$ restricted to $\cinfz{(0,1)}\subset L^2[0,1]$.
Let $H_0^t$ be the formal adjoint of $H_0$. This is the differential expression 
$-\frac{d^2}{dx^2}+\frac{\ovl{\nu}^2-1/4}{x^2}+\frac 1x \ovl{V}(x)$ acting on $\cinfz{(0,1)}$.
$H_0$ is symmetric if both $\nu\in \R$ and $V$ is real valued.

As usual we denote by $\Hmin=\overline{H_0}$ the closure of $H_0$ and by $\Hmax=(H_0^t)^*=\bigl(H^t_{\min}\bigr)^*$. 
For convenience we introduce the \emph{left minimal and right maximal domain}
$\domL(H)$ as the domain of the closure of $H\restriction \cinfz{(0,1]}$.
The \emph{left maximal and right minimal domain} $\domR(H)$ is defined accordingly with $\cinfz{(0,1]}$ replaced
by $\bigsetdef{f\in\dom(\Hmax)}{\supp f \subset [0,1) \text{ compact}}$. 
Note that by the definition of support in \eqref{support}, compactness of $\supp f\subset [0,1)$ 
means that $\supp f$ has a positive distance from the point $x=1$.

Although there is no simple Weyl alternative in the non-self-adjoint context, we say that 
$x=0$ (resp. $x=1$) is in the limit point case for $H$ if 
$\dom_L(H)=\dom(H_{\max})$ (resp. $\dom_R(H)=\dom(H_{\max})$). Otherwise, we say that it is in the 
limit circle case.

We will see in Section \ref{sec:MaxDom} that there are continuous linear
functionals $c_j, j=1,2,$ on $\dom(\Hmax)$
such that for $f\in\dom(\Hmax)$
\begin{equation}
   f=c_1(f) g_1+ c_2(f) g_2+ \tilde f,\quad \tilde f\in \domL(H),
\end{equation}
where $g_1,g_2$ are defined in \eqref{eq:g1} and \eqref{eq:g2}.

A boundary condition at the left end point is therefore of the form
\begin{equation}\label{eq:BouOp-1}
    B_{0,\theta}f:=\sin\theta \cdot c_1(f)+\cos\theta \cdot c_2(f)=0,\quad 0\le\theta<\pi.
\end{equation}
$\theta=0$ gives the Dirichlet boundary condition (Friedrichs extension near $0$).

It should be noted here that $0$ is in the limit point case for $H$ if and
only if $\Re \nu\ge 1$. In this
case $g_2\not\in L^2[0,1]$, $c_2=0$, and hence $\domL(H)=\dom(\Hmax)$. Thus
if $\Re \nu\ge 1$ we consider only the case $\theta=0$. Boundary conditions
such as \eqref{eq:BouOp-1} at the singular end point have been 
studied in depth by Rellich \cite{Rel:DZR} and extended by Bulla-Gesztesy \cite{BulGes:DIS}. 

From the well--known fact that a linear second order ODE with $L^1$--coefficients has 
$AC^{2}$--solutions it follows in view of our assumptions on $V$ that 
$\dom(\Hmax)\subset AC^{2}(0,1]$
and hence  $1$ is always in the limit circle case for $H$. 
At the right end-point we therefore impose boundary conditions of the form
\begin{equation}\label{eq:BouOp-2}
    B_{1,\theta}f:=\sin\theta\cdot f'(1)+\cos\theta\cdot f(1)=0, \quad 0\le \theta<\pi.
\end{equation}
For each admissible pair $(\theta_0,\theta_1)\in [0,\pi)^2$ ($0\le \theta_0<\pi$ if $\Re\nu<1$, 
$\theta_0=0$ if $\Re \nu\geq 1$) we obtain a closed realization $H(\theta_0,\theta_1)$ of the 
operator with separated boundary conditions $B_{0,\theta_0}, B_{1,\theta_1}$. All eigenvalues of $H(\theta_0,\theta_1)$
are therefore simple.

Under the technical assumption that $V$ is of determinant class, see Definition \plref{def:DetClass}, which is satisfied for
all real valued potentials $V\in\sV_\nu$ we can prove (Theorem \plref{5-b-t} and Theorem \plref{t:TraceAsymptotics})
\begin{theorem}\label{thm:ResolventExpansion} 
Let $\nu\ge 0$, $V\in\sV_\nu$, and assume that $\theta_0=0$ or that $V$ is of determinant class and $\nu>0$.
Then the resolvent of $H(\theta_0,\theta_1)$ is trace class. Moreover, there is a $z_0>0$ such
that $H(\theta_0,\theta_1)+z$ is invertible for $z\ge z_0$ and there is an asymptotic expansion
\begin{equation}
   \Tr\bigl( H(\theta_0,\theta_1)+z\bigr)\ii= \frac{a}{\sqrt{z}}+\frac bz+R(z), \quad z\ge z_0, z\in \R,
\end{equation}
with
\begin{align}
    &\lim_{z\to\infty} z R(z)=0,\\
    &\int_{z_0}^\infty |R(z)|dz <\infty.
\end{align}
\end{theorem}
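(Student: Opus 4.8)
The plan is to verify the three conditions \eqref{eq:DetExp}--\eqref{eq:DetConB} by reducing to the Friedrichs extension of the model operator $l_\nu$, whose resolvent is explicit and well behaved, and to split the argument according to the two alternatives in the hypothesis. Invertibility of $H(\theta_0,\theta_1)+z$ for large real $z$ I would get from the fact that $H(\theta_0,\theta_1)$ is a relatively bounded perturbation of a sectorial operator bounded below (in the self-adjoint situation, real $\nu$ and real $V$, it is itself bounded below); trace-classness of the resolvent will be inherited from the model operator, whose resolvent is an integral operator with continuous kernel on the compact interval $[0,1]$. It then remains to exhibit the asymptotic expansion of $\Tr(H(\theta_0,\theta_1)+z)^{-1}$.

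\textbf{The Friedrichs case $\theta_0=0$.} Here $H(0,\theta_1)=L_\nu+X^{-1}V$, where $L_\nu$ is the Friedrichs extension of $l_\nu$ subject to the boundary condition $B_{1,\theta_1}$ at the regular endpoint $x=1$. From \cite{BruSee:RSA} I would take that the resolvent of $L_\nu$ is trace class and that $\Tr(L_\nu+z)^{-1}$ has a complete asymptotic expansion as $z\to\infty$, whose first two terms have the form $a_0 z^{-1/2}+b_0 z^{-1}$ and whose remainder obeys \eqref{eq:DetConA}, \eqref{eq:DetConB}. The structural fact to exploit is that the kernel of $(L_\nu+z)^{-1}$ is built from the modified Bessel functions $I_\nu,K_\nu$, so that range elements vanish like $x^{\nu+1/2}$ at $0$; together with the logarithmic $L^1$-integrability \eqref{eq:ML1003052}, \eqref{eq:ML1003053} of $V$ this lets one estimate the mixed operators $X^{-1}V(L_\nu+z)^{-1}\cdots$ in operator and trace norm, with enough $z$-decay. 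I would then expand
\begin{equation}
  (H(0,\theta_1)+z)^{-1}=(L_\nu+z)^{-1}\sum_{k\ge0}\bigl(-X^{-1}V\,(L_\nu+z)^{-1}\bigr)^k,
\end{equation}
show the series converges in trace norm for $z\ge z_0$, and take traces term by term: the $k=0$ term produces the Br\"uning--Seeley expansion, and for $k\ge 1$ a trace-norm estimate shows the $k$-th term feeds only into a remainder satisfying \eqref{eq:DetConA}, \eqref{eq:DetConB}. This is where the low regularity of $V$ bites: $X^{-1}V$ is not a bounded perturbation, and the logarithmic weights in $\sV_\nu$, together with the thresholds $\nu_1=\nu+1/2$, $\nu_2=-\nu+1/2$ governing the behaviour of $(L_\nu+z)^{-1}$ near $0$, are exactly calibrated to make the estimates go through. (This is the content of what will become Theorem \ref{5-b-t}.)

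\textbf{The non-Friedrichs case $\theta_0\neq0$, $\nu>0$, $V$ of determinant class.} Now the perturbation scheme above fails, because domain elements of $H(\theta_0,\theta_1)$ only vanish like the slower solution $g_2\sim x^{-\nu+1/2}$, so the resolvent no longer absorbs enough of the $X^{-1}$ singularity. Instead I would invoke the factorization of Section \ref{s-a}: after a spectral shift, write $H(\theta_0,\theta_1)=D_1D_2$ with $D_1,D_2$ closed extensions of first order regular singular differential operators, the determinant-class hypothesis (Definition \ref{def:DetClass}) being precisely what makes this factorization available for the given boundary data. Since $D_1D_2$ and $D_2D_1$ share the same nonzero spectrum, and $D_2D_1$ is of Friedrichs type near $0$, the previous case supplies the resolvent trace expansion for $D_2D_1$; the comparison result for the Wronskians of the normalized fundamental solutions of $D_1D_2$ and $D_2D_1$ from Section \ref{s-a} then transports that expansion to $D_1D_2=H(\theta_0,\theta_1)$. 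Finally, to reach a general $V\in\sV_\nu$ one must still absorb a compactly supported $L^2$-perturbation of the factorizable operator; I would handle this by pasting together a local resolvent near $0$ (the factorizable model) and a local resolvent on $[\varepsilon,1]$ (a regular Sturm--Liouville problem with a classical resolvent trace expansion) via a partition of unity, as in \cite[Appendix]{LMP:CCC}, the gluing errors being trace-norm negligible and uniform in $z$. Assembling these pieces gives the expansion (this will become Theorem \ref{t:TraceAsymptotics}) and completes the proof.

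\textbf{Where the difficulty lies.} The main obstacle is the non-Friedrichs case: one must check that $H(\theta_0,\theta_1)$ genuinely factorizes as $D_1D_2$ under the determinant-class assumption and the prescribed boundary conditions; control the normalization and Wronskian under the non-commutative passage between $D_1D_2$ and $D_2D_1$ (the technical heart of Section \ref{s-a}); and carry out the resolvent-pasting with trace-norm estimates uniform in $z$, all under only log-weighted $L^1$ control of $V$. Even the Friedrichs case is not routine, precisely because $X^{-1}V$ is singular and only weakly integrable, so the perturbation series has to be estimated against the sharp weights built into the definition of $\sV_\nu$, with the exponents $\nu_1,\nu_2$ deciding whether \eqref{eq:ML1003052}--\eqref{eq:ML1003053} are enough.
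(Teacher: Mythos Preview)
Your overall strategy matches the paper's: Theorem \ref{5-b-t} handles the Friedrichs case by a Neumann-series perturbation from $L_\nu$, and Theorem \ref{t:TraceAsymptotics} handles the non-Friedrichs case by factorizing, commuting to $D_2D_1$ (which is Friedrichs at $0$), and comparing resolvents via cut-off pasting. Two misattributions should be corrected, however.

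First, the determinant-class hypothesis (Definition \ref{def:DetClass}) is \emph{not} what makes the factorization available; factorizability up to an $L^2_\comp$ potential is guaranteed by Proposition \ref{p:FactorH} for any $V\in\sV_{\Re\nu}$. Determinant class is a package of a priori resolvent estimates \eqref{eq:DetClass1}--\eqref{eq:DetClass3} needed precisely for the resolvent-pasting arguments (Propositions \ref{p:Comparison}, \ref{p:Comparison2}) and for absorbing the $L^2_\comp$ perturbation (Lemma \ref{l:DetClassCriterion}). Second, the transfer of the trace expansion from $D_2D_1$ to $D_1D_2$ is not done via the Wronskian comparison of Section \ref{s-a}; Proposition \ref{p:CommutatorWronskian} is used only later, in the proof of the determinant formula Theorem \ref{thm:Main}. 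For the resolvent trace one uses only the elementary spectral identity $\Tr(D_1D_2+z)^{-1}=\Tr(D_2D_1+z)^{-1}+d\,z^{-1}$ with $d=\dim\ker D_1D_2-\dim\ker D_2D_1\in\{0,1\}$, which contributes at worst a $z^{-1}$ term.

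One technical point in the Friedrichs case: the paper does not expand $(L_\nu+z)^{-1}\bigl(-X^{-1}V(L_\nu+z)^{-1}\bigr)^k$ as you write, because $\dom\bigl(H(0,\theta_1)\bigr)\neq\dom(L_\nu)$ in general and $X^{-1}V$ need not map $\dom(L_\nu)$ into $L^2$. Instead it works at the level of forms (Proposition \ref{p:mSec}) and writes the Neumann series symmetrically with $(L_\nu+z)^{-1/2}$ on both ends, using that $x^{-1/2}|V|^{1/2}(L_\nu+z)^{-1/2}$ is Hilbert--Schmidt with norm tending to $0$ (Proposition \ref{p:PerturbationEstimate}). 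That square-root device is exactly the fix your parenthetical ``this is where the low regularity of $V$ bites'' is pointing toward.
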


In view of this Theorem we may define $\detz\bigl( H(\theta_0,\theta_1)\bigr)$ according to \eqref{eq:ZetaPrimeZero}.

\subsection{The main result}
To explain our result we need to introduce the notion of a normalized solution at one of the end points.
First, we define an invariant of the boundary operator $B_{j,\theta}$:
\begin{equation}\label{eq:DefMu0}
 \mu_0:=\mu(B_{0,\theta}):=\begin{cases} \nu,& \textup{if } \theta=0,\\
                                       -\nu, & \textup{if } 0<\theta<\pi;
                         \end{cases}
\end{equation}
resp.
\begin{equation}\label{eq:DefMu1}
      \mu_1:=\mu(B_{1,\theta}):=\begin{cases} 1/2,& \textup{if } \theta=0,\\
                                       -1/2, & \textup{if } 0<\theta<\pi.
                         \end{cases}
\end{equation}
To explain the $\pm 1/2$ we note that the right end point may artificially be viewed as
a regular singular point with $\nu=1/2$. Hence $\mu_j$ depend in fact on $\theta$ and
the characteristic exponent of the regular singular point.

A solution of the homogeneous differential equation $Hg=0$ is called \emph{normalized
at $0$} with respect to the boundary operator $B_{0,\theta}$ if $B_{0,\theta}g=0$ and
if $g(x)\sim x^{\mu_0+1/2}$, as $x\to 0$; here we use the notation
\begin{align}\label{sim}
f(x)\sim h(x), \ \textup{as} \ x\to x_0 : \Leftrightarrow \ \lim_{x\to x_0}\, \frac{f(x)}{h(x)}=1.
\end{align}
 Similarly, $g$ is called \emph{normalized at $1$}
with respect to the boundary operator $B_{1,\theta}$ if $B_{1,\theta}g=0$ and
if $g(x)\sim (1-x)^{\mu_1+1/2}$ as $x\to 1$. 
It is straightforward to check that there is always a unique normalized solution.

\begin{theorem}\label{thm:Main}
Let $B_{j,\theta_j}, j=0,1$ be admissible boundary operators for $H$. Under the same assumptions as
in Theorem \plref{thm:ResolventExpansion} the zeta--regularized
determinant of $H(\theta_0,\theta_1)$ is given by
\begin{equation}\label{eq:Main}
    \detz\bigl( H(\theta_0,\theta_1)\bigr)= \frac{\pi}{2^{\mu_0+\mu_1}\Gamma(\mu_0+1)\Gamma(\mu_1+1)}W(\psi,\varphi).
\end{equation}
Here, $\varphi,\psi$ are solutions to the homogeneous differential equation $Hg=0$ such
that $\varphi$ is normalized for $B_{0,\theta_0}$ (at $0$) and $\psi$ is normalized for $B_{1,\theta_1}$
(at $1$). Furthermore, $W(\psi,\varphi)=\psi \varphi'-\psi'\varphi$ denotes the Wronskian of $\psi,\varphi$.
\end{theorem}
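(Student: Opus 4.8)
The plan is to reduce the general theorem to the two cases the authors have already announced as available: the Friedrichs model operator $L_\nu$ (where the formula is known from \cite{Les:DRS}) and then to propagate the formula under perturbation of the potential and under change of boundary condition, using the variational formula of Section \ref{q-section} and the factorization results of Section \ref{s-a}. Concretely, I would proceed as follows. First, observe that both sides of \eqref{eq:Main} are well-defined under the hypotheses of Theorem \plref{thm:ResolventExpansion}: the left side by that theorem together with \eqref{eq:ZetaPrimeZero}, and the right side because there is a unique normalized solution at each end point (as remarked after \eqref{sim}) and the Wronskian of two solutions of $Hg=0$ is a constant, hence a well-defined number. Note also that $\detz H(\theta_0,\theta_1)=0$ precisely when $0$ is an eigenvalue, and $0$ is an eigenvalue exactly when $\varphi$ (or $\psi$) satisfies both boundary conditions, i.e. when $\varphi$ and $\psi$ are proportional, i.e. when $W(\psi,\varphi)=0$; so the two sides vanish simultaneously and we may assume $0\notin\spec H(\theta_0,\theta_1)$.

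Next, I would use the variational formula from Section \plref{q-section}. Fix the boundary conditions $(\theta_0,\theta_1)$ and consider the family $H_r=l_\nu+X\ii V_r$ for a path $V_r$ in $\sV_\nu$ joining $V_0=0$ (the model operator) to $V_1=V$; the variational formula expresses $\partial_r\log\detz H_r(\theta_0,\theta_1)$ in terms of the trace of $(H_r(\theta_0,\theta_1))\ii$ composed with $X\ii\dot V_r$, and this is classically matched by $\partial_r\log W(\psi_r,\varphi_r)$ where $\varphi_r,\psi_r$ are the normalized solutions of $H_r g=0$ — the point being that the resolvent kernel of $H_r(\theta_0,\theta_1)$ is built precisely from $\varphi_r,\psi_r$ and their Wronskian, so the logarithmic derivatives of the two sides of \eqref{eq:Main} agree. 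Here the low-regularity hypotheses on $V$ and the singularity at $0$ force care: one must know that $\varphi_r$ and its near-$0$ asymptotics depend suitably on $r$, which is exactly what Section \plref{q-section} is set up to supply. Consequently, it suffices to verify \eqref{eq:Main} for the model operator $l_\nu$ itself, for each admissible $(\theta_0,\theta_1)$.

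It then remains to handle the model operator, and here the cases $\theta_0=0$ and $0<\theta_0<\pi$ (the latter only when $\Re\nu<1$, and the theorem further requires $\nu>0$ in that case) must be treated. For $\theta_0=0$ one reads off the Friedrichs-extension formula of \cite{Les:DRS}: with $g_1(x)=x^{\nu_1}$, $g_2$ as in \eqref{eq:g2} explicitly available for $V=0$ (in terms of $x^{1/2}J_{\pm\nu}$-type solutions, used only implicitly), one computes the normalized solutions $\varphi$ at $0$ and $\psi$ at $1$ for the given $\theta_1$ and the constant $W(\psi,\varphi)$ explicitly, and checks it equals $2^{-\mu_0-\mu_1}\pi\,\Gamma(\mu_0+1)\ii\Gamma(\mu_1+1)\ii\detz L_\nu(\theta_0,\theta_1)$; the $\Gamma$- and $2$-powers are exactly the normalization constants carried by the Bessel solutions. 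For $0<\theta_0<\pi$ one invokes the factorization $H(\theta_0,\theta_1)=D_1 D_2$ of Section \plref{s-a} and the comparison of Wronskians for $D_1 D_2$ versus $D_2 D_1$, together with the multiplicativity of the zeta-determinant under such factorizations (the anomaly terms $a,b$ in \eqref{eq:DetExp} being controlled by Theorem \plref{thm:ResolventExpansion}): writing $D_2 D_1$ as a model operator with the \emph{shifted} characteristic exponent $-\nu$ reproduces the sign flip $\mu_0=-\nu$ in \eqref{eq:DefMu0} and the corresponding $\Gamma(-\nu+1)$, and one matches constants again. The main obstacle, I expect, is precisely this bookkeeping of multiplicative anomalies in the factorization step: proving that the zeta-determinant of $H(\theta_0,\theta_1)$ factors as a product of zeta-determinants of the first-order factors up to an explicitly computable constant, in the presence of the regular singularity and with only $L^2_{\mathrm{loc}}$-type control on $V$, is where the determinant-class hypothesis and the resolvent-pasting arguments of Sections \plref{sec:ResolventExpansion} and \plref{s-a} do the real work; once that constant is pinned down, identifying it with the ratio of $\Gamma$-factors in \eqref{eq:Main} is a finite computation with the model solutions.
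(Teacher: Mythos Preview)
Your overall strategy---reduce to the model operator via the variational formula, then handle the model by factorization for $\theta_0>0$---has a genuine gap in the first step. The variational result of Section \plref{q-section} (Theorem \plref{thm:PotentialVariation}) comes in two parts with very different scopes: Part 2 allows variation of the potential along a path $\eta\mapsto V_\eta$ in $\sV_\nu$, but \emph{only} for the Dirichlet condition $\theta_0=0$; Part 1 works for general $\theta_0\in[0,\pi)$ but permits only perturbations $W_\eta\in L^\infty[0,1]+L^2_\comp(0,1]$, i.e.\ perturbations bounded near the singular end. This restriction is not accidental: as explained at the start of Section \plref{sec:GenBC}, for $\theta_0>0$ the resolvent of $l_\nu(\theta_0,\theta_1)$ does not absorb negative powers of $x$, so the crucial perturbation estimate \eqref{eq:PerturbationEstimate}---which is what makes the full $\sV_\nu$-variation tractable---fails. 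Consequently you cannot deform a general $V\in\sV_\nu$ to zero while keeping $\theta_0>0$, and your reduction ``it suffices to verify \eqref{eq:Main} for the model operator $l_\nu$ itself, for each admissible $(\theta_0,\theta_1)$'' is not justified.

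The paper therefore reverses your order of operations. For $\theta_0=0$ it does exactly what you propose: deform $V_\eta=\eta V$ to zero via Part 2 and cite \cite{Les:DRS} for the model operator. For $0<\theta_0<\pi$ it first applies Proposition \plref{p:FactorH} to the \emph{given} operator (not the model) to write $H(\theta_0,\theta_1)=D_1D_2+W$ with $W\in L^2_\comp(0,1]$; then it varies only this compactly supported $W$ to zero using Part 1 of Theorem \plref{thm:PotentialVariation}; then it uses the elementary spectral identity $\detz(D_1D_2+z)=z^d\detz(D_2D_1+z)$, $d=\dim\ker D_1D_2-\dim\ker D_2D_1$ (no multiplicative anomaly for the first-order factors is computed or needed), together with the explicit Wronskian comparison of Proposition \plref{p:CommutatorWronskian}; finally it observes that $D_2D_1$ carries the Dirichlet condition at $0$, so the already-proved $\theta_0=0$ case applies to it. The $\Gamma$-factor bookkeeping then drops out of a two-line calculation using $\Gamma(\mu+2)=(\mu+1)\Gamma(\mu+1)$. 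Note also that by \eqref{eq:H21} the operator $D_2D_1$ has characteristic parameter $1-\nu$ at $0$, not $-\nu$ as you wrote.
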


Theorem \plref{thm:Main} has a relatively straightforward extension to the case where the potential has
regular singularities at both end points. The proof does not require any essentially
new idea; the details, however, are a bit tedious and are therefore left to the reader,
cf. Remark \plref{r:Concluding}.

The case $\nu=0$ and $\theta_0>0$, which is not covered by Theorem
\plref{thm:Main}, requires specific analysis of unusual singular phenomena 
in the trace expansion of $H$, 
as observed first by  Falomir, Muschietti, Pisani, and Seeley \cite{FMPS:UPZ}; 
see also the nice elaboration 
by Kirsten, Loya and Park in \cite{KLP:UPR}. 
The discussion of the zeta-determinant in this case
therefore requires another publication.


To outline the proof of Theorem \plref{thm:Main} we first observe that if $D_1, D_2$ are closed operators in a Hilbert space
then $\spec D_1D_2\cup\{0\}=\spec D_2D_1\cup \{0\}$ and,  even more, 
nonzero eigenvalues of $D_1D_2$ and $D_2D_1$ have the same multiplicity.
Hence if both $D_1D_2$ and $D_2D_1$ satisfy the general assumptions of Section \plref{ss:ZetaDeterminant} then
for $z\in \C$
\begin{equation}
     \detz(D_1D_2+z)= z^d \detz(D_2D_1+z),\quad d:=\dim\ker D_1D_2-\dim\ker D_2D_1.
\end{equation}
We will show in Proposition \plref{p:FactorH} that $H(\theta_0,\theta_1)$ can always be written in the form
\begin{equation}\label{eq:FactorH}
        H(\theta_0,\theta_1)=D_1D_2 +W
\end{equation}
with a compactly supported $L^2$-potential $W$ and $D_1, D_2$ suitable closed extensions of the operators
\begin{equation}
          d_1=\frac{d}{dx}+\frac{\go'}{\go},\quad d_2=-\frac{d}{dx}+\frac{\go'}{\go}
\end{equation}
with a certain function $\go$ which is singular at $0$; its properties will be described in detail in the text.
The crucial point is that for the interesting case $\theta_0>0$ one can choose $D_1, D_2$ in such a way
that $D_2D_1$ also is an operator to which Theorem \plref{thm:ResolventExpansion} applies and such that
the boundary condition at $0$ is the Friedrichs extension. 
The Friedrichs extension at $0$ is much better behaved and can be treated for our class of operators
basically as in \cite{Les:DRS}. The proof is completed then by employing a variation result for 
the behavior of the zeta-determinant under variation of the potential $W$ (Theorem \plref{thm:PotentialVariation}).

\section{The fundamental system of a regular singular equation --  B\^{o}cher's Theorem}\label{sec:FunSys}

Consider the following regular singular model operator 
\begin{align}\label{1-a}
l_\nu:=-\frac{d^2}{dx^2}+\frac{\nu^2-1/4}{x^2}, \quad \nu\in \C,
\end{align}
acting on $C^{\infty}_0(0,1)\subset L^2[0,1]$.  $\nu$ is a complex number for which without loss of generality
we may assume $\Re\nu\geq 0$.

We are interested in perturbations of the form
\begin{align}\label{2-a}
H:=l_\nu+X\ii V,
\end{align}
with $V\in L^1_\loc(0,1)$ and $X$ denoting the function $X(x)=x$. In this section we are concerned with the 
description of the asymptotic behavior as $x\to 0$ of a fundamental system of solutions to the equation $Hf=0$. 

If $V$ is \emph{analytic}, then the classical theory of ordinary differential equations with regular singularities, 
cf. e.g. \cite[Chap. 5]{CodLev:TOD}, applies and the characteristic exponents of the regular singular point at 
$x=0$ are given by $$\nu_1=\nu + 1/2, \quad \nu_2=-\nu +1/2.$$

Furthermore, there is a fundamental system of solutions to $Hf=0$ of the form
\begin{align}\label{6-a}
f_1(x)=x^{\nu_1}\widetilde{f}_1(x), \quad f_2(x)=
\begin{cases}-\frac{1}{2\nu}x^{\nu_2}\widetilde{f}_2(x), &\textup{if } \nu \neq 0, \\
 \sqrt{x}\log (x)\; \widetilde{f}_2(x),  & \textup{if }\nu=0,
\end{cases} 
\end{align}
where $\widetilde{f}_j, j=1,2,$ are analytic functions with $\widetilde{f}_j(0)=1$. 
The normalization of solutions is chosen so that 
\begin{align}\label{7-a}
W(f_1,f_2)=f_1 f_2' -f_1' f_2=1.
\end{align}

It is less known that already M. B\^{o}cher \cite{Boc:RSP} investigated regular singular 
points of ordinary differential equations with non-analytic coefficients. 
For Bessel operators with $L^2$ potentials a thorough analysis of the
fundamental system of solutions was made e.g. by Carlson \cite{Car:BLT}.
B\^{o}cher's result reads as follows.
\pagebreak[3]
\begin{theorem}\label{5-a-t} \textup{[M. B\^ocher]}
Let
$$H=-\frac{d^2}{dx^2}+\frac{\nu^2-1/4}{x^2}+\frac{1}{x}V(x), $$
where $\nu\in \C, \Re\nu\geq 0,$ and $V\in \llog\sVnu$.
Then the differential equation $Hg=0$ has a fundamental system of solutions $g_1,g_2$, such that 
\begin{align}
g_1(x) =& \ x^{\nu_1}\widetilde{g}_1(x),\label{40} \\
g_2(x) =& \, \begin{cases}-\frac{1}{2\nu}x^{\nu_2}\widetilde{g}_2(x),  &
 \textup{if }\nu \neq 0, \\
                     \sqrt{x}\log (x) \widetilde{g}_2(x), & \textup{if }\nu=0, 
             \end{cases} \label{40-a} 
\end{align}
where $\widetilde{g}_j\in C[0,1], \ \widetilde{g}_j(0)=1$ for $j=1,2$. 
 
Furthermore, 
\begin{align}
g'_1(x) =& \ \nu_1x^{\nu_1-1}h_1(x), \label{41} \\
g'_2(x) =& \, \begin{cases}-\frac{\nu_2}{2\nu}x^{\nu_2-1}h_2(x),  &\textup{if }\nu \neq 0, \\
            \frac{1}{2\sqrt{x}}\log (x) h_2(x), & \textup{if }\nu=0, 
            \end{cases} \label{41-a} 
\end{align}
where $h_j\in C[0,1], \ h_j(0)=1$ for $j=1,2$.\bigskip

Finally, with these normalizations
$$W(g_1,g_2)=g_1g_2'-g_1'g_2=1.$$
\end{theorem}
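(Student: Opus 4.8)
\emph{Overall strategy and reduction to integral equations.}
All the content is in the behaviour as $x\to 0$: on every interval $[x_0,1]$ the coefficients of $Hg=0$ lie in $L^1$, so the classical existence theory already supplies $AC^2$ solutions there, and it remains to construct, near $0$, solutions with the asserted asymptotics and then patch. I would therefore fix a small $x_0>0$ and work on $(0,x_0]$. Substituting $g_1=x^{\nu_1}\widetilde g_1$ into $Hg_1=0$ kills the Euler part (because $\nu_j(\nu_j-1)=\nu^2-1/4$) and yields the equation
\[
   (x^{2\nu_1}\widetilde g_1')'=x^{2\nu_1-1}\,V\,\widetilde g_1 ,
\]
and likewise $g_2=-\tfrac1{2\nu}x^{\nu_2}\widetilde g_2$ gives $(x^{2\nu_2}\widetilde g_2')'=x^{2\nu_2-1}V\widetilde g_2$; for $\nu=0$ the substitutions $g_1=\sqrt x\,\widetilde g_1$ and $g_2=\sqrt x\log x\cdot\widetilde g_2$ both lead to $(x\psi')'=V\psi$, with $\psi=\widetilde g_1$ resp.\ $\psi=\log x\cdot\widetilde g_2$. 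In each case the homogeneous model equation is solved by $\{1,x^{\mp 2\nu}\}$ (resp.\ $\{1,\log x\}$), and integrating twice — i.e.\ variation of parameters — turns the problem into a Volterra-type fixed point equation $\widetilde g_j=1+T_j\widetilde g_j$, $(T_jf)(x)=\int K_j(x,t)\,V(t)f(t)\,dt$, with an explicit kernel $K_j$ built from the model Green's function; the free term is the constant $1$ except in the $\nu=0$, second–solution case where it is $\log x$ (so there one works in the weighted space $\{\,|\psi|\le C|\log x|\,\}$).

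\emph{Solving the integral equations.}
The crux is to read off from the model Green's function how $K_j$ grows and to match this against the minimal integrability of $V$: one finds $K_1$ bounded, $K_2$ (and $K_1$ when $\nu=0$) of growth $|\log t|$, and the $g_2$–kernel at $\nu=0$ of growth $(\log t)^2$ — precisely the weights $\llog,\llog^2$ appearing in \eqref{eq:ML1003052}, \eqref{eq:ML1003053}. Hence $T_j$ preserves the relevant space of continuous functions on $[0,x_0]$ and, by the standard factorial estimate $\|T_j^k 1\|\le\bigl(\int_0^{x_0}|V|w\bigr)^k/k!$ for the weight $w\in\{1,\llog,\llog^2\}$, the Neumann series $\widetilde g_j=\sum_k T_j^k 1$ converges uniformly once $x_0$ is small, producing $\widetilde g_j\in C[0,x_0]$ with $\widetilde g_j(0)=1$. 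One genuine subtlety occurs for $g_2$ when $\Re\nu>0$: the "large" part of the Green's function, integrated from $0$, diverges, so one keeps a fixed reference point $c$ there, which makes that piece an \emph{anti}-Volterra operator; its operator norm nonetheless tends to $0$ as $c=x_0\to0$, since $x^{2\Re\nu}\int_x^{x_0}t^{-2\Re\nu}|V|\,dt\le\int_0^{x_0}|V|$.

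\emph{Derivatives and the Wronskian.}
Differentiating the integral equations is legitimate since $V\in L^1_{\mathrm{loc}}$; the $V(x)$–boundary terms cancel and one is left with a formula of the shape $\widetilde g_j'(x)=x^{-2\nu_j}\int t^{2\nu_j-1}V\widetilde g_j$ (with the reference point $c$ in place of $0$ in the $g_2$, $\Re\nu>0$ case), from which the same kind of estimates give $x\widetilde g_j'(x)\to0$ as $x\to0$. This yields \eqref{41}, \eqref{41-a} with $h_j=\widetilde g_j+(x\widetilde g_j')/\nu_j$ (suitably modified at $\nu=0$), continuous on $[0,x_0]$ with $h_j(0)=1$. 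Finally, since $H$ has no first–order term, Abel's identity makes $W(g_1,g_2)$ constant on $(0,1]$, so its value equals $\lim_{x\to0}W(g_1,g_2)(x)$; inserting \eqref{40}–\eqref{41-a}, for $\nu\ne0$ the powers of $x$ cancel (as $\nu_1+\nu_2=1$) and the limit is $(\nu_1-\nu_2)/(2\nu)=1$, while for $\nu=0$ the two $\log$–divergent contributions cancel and the limit is again $1$ — this last point needs the sharper estimate $(\log x)\,x\widetilde g_1'(x)\to0$.

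\emph{Main obstacle.}
The real difficulty is this kernel/weight bookkeeping under minimal regularity of $V$: matching the growth of the model Green's function exactly to $V\llog\in L^1$ (resp.\ $V\llog^2\in L^1$). It is most delicate at $\nu=0$ for the second solution, which is what forces the $\llog^2$ hypothesis, and in the $\Re\nu>0$ case for $g_2$, where the naive Volterra equation must be replaced by the mixed one anchored at a reference point; pinning down the Wronskian normalisation at $\nu=0$ likewise requires these sharper second–order estimates rather than just the leading asymptotics.
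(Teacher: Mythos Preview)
Your construction of $g_1$ via a Volterra fixed-point equation is essentially the paper's argument: the paper writes $g_1=x^{\nu_1}(1+\phi)$ and solves $\phi=(I-K_\nu V)^{-1}K_\nu V\mathbf 1$ on $C[0,1]$ with the same kernel and the same factorial estimates you describe. The real divergence is in how you handle $g_2$. The paper does \emph{not} set up a second integral equation; once $g_1$ is in hand it obtains $g_2$ by reduction of order,
\[
   g_2(x)=-g_1(x)\int_x^{x_0}g_1(y)^{-2}\,dy,
\]
which is shorter, makes $W(g_1,g_2)=1$ automatic (no limiting argument at $x=0$ needed), and---crucially---imposes no integrability requirement on $V$ beyond what was already used to build $g_1$.

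This last point is where your sketch has a genuine gap. You say the $g_2$-kernel at $\nu=0$ has $(\log t)^2$ growth and that this ``forces the $\llog^2$ hypothesis''. But the theorem's hypothesis for $\nu=0$ is $V\in\llog\,\sV_0$, which unwinds to $V\llog\in L^1$, \emph{not} $V\llog^2\in L^1$; the remark immediately after the theorem stresses exactly this, noting that the result is strictly sharper than B\^ocher's original (which did assume $V\log^2\in L^1$). Your direct integral equation for $\psi=\log x\cdot\widetilde g_2$ in the weighted space $\{|\psi|\le C|\log x|\}$, as you have written it, needs $\int_0^{x_0}|V(t)|\log^2 t\,dt<\infty$ to control the term $\int\log t\cdot V\psi$, so it recovers only B\^ocher's version, not the theorem as stated. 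The reduction-of-order route sidesteps this entirely: once $\widetilde g_1\in C[0,1]$ with $\widetilde g_1(0)=1$ is known (requiring only $V\llog\in L^1$), the integral $\int_x^{x_0}y^{-1}\widetilde g_1(y)^{-2}\,dy$ has the correct $\log x$ behaviour without further reference to $V$. A similar though less damaging mismatch appears for $\nu\neq 0$: you claim $K_2$ has $|\log t|$ growth and match it to the $\llog$ weight in \eqref{eq:ML1003052}, but in fact your own anti-Volterra estimate $x^{2\Re\nu}\int_x^{x_0}t^{-2\Re\nu}|V|\le\int_0^{x_0}|V|$ already shows the effective kernel is bounded, consistent with the theorem's actual hypothesis $V\in L^1$ there.
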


\begin{remark}
In the case $\nu =0$ the theorem as stated is slightly more general than
\cite{Boc:RSP}, where $V\log^2 \in L^1[0,1]$ is assumed. Moreover, note that
the conditions on the potential $V$ in the theorem are satisfied whenever
$V\in L^p[0,1], p>1$,
or more generally $V\in\sV_\nu$.
\end{remark}

We briefly sketch a proof of Theorem \ref{5-a-t} in modern language. Being self--contained is not
the only reason for presenting the proof in some detail: the method of proof will allow a more
precise analysis of the regularity properties of $\tilde g_j$ (see Prop. \plref{les-3} and Prop. \plref{les-4}
below) which will be needed later on. Furthermore, the method will be needed for deriving the variation formula
for the zeta-determinant under variation of the potential (Section \plref{sec:VarPot}). 

\begin{proof}
The regular singular operator $l_\nu$ has the following fundamental system of solutions to $l_\nu f=0$:
\begin{align*}
f_1(x)=x^{\nu_1}, \quad f_2(x)=
   \begin{cases}-\frac{1}{2\nu}x^{\nu_2}, & \textup{if }\nu \neq 0, \\ 
       \sqrt{x}\log x,  & \textup{if }\nu=0.
   \end{cases}  
\end{align*}

For the Wronskian we have $W(f_1,f_2)=f_1 f_2'-f_1'f_2=1$. For a solution to $Hg=0$ we make the ansatz
$$g_1(x)=f_1(x)+x^{\nu_1}\phi(x)=x^{\nu_1}(1+\phi(x)).$$

Plugging this ansatz into the ordinary differential equation $Hg=0$ yields for $\psi(x)=x^{\nu_1}\phi(x)$
\begin{align}\label{20-a}
-\psi''(x)+\frac{\nu^2-1/4}{x^2}\psi(x)=-\frac{1}{x}V(x)[f_1(x)+\psi(x)],
\end{align}
thus 
\begin{equation}
\begin{split}
\phi(x)=& - x^{-\nu_1}f_1(x)\int_0^xf_2(y)\frac{1}{y}V(y)y^{\nu_1}[1+\phi(y)]dy \\
        &+x^{-\nu_1}f_2(x)\int_0^xf_1(y)\frac{1}{y}V(y)y^{\nu_1}[1+\phi(y)]dy\\
       =:&(K_\nu V \one) (x)+(K_\nu V \phi )(x), \label{22-a}
\end{split}
\end{equation}
where $K_{\nu}$ is the Volterra operator with the kernel 
\begin{align}
k_{\nu}(x,y)=&\frac{1}{2\nu}\bigl(1-x^{-2\nu}y^{2\nu}\bigr),   &y\leq x,
\textup{ if }  \nu\neq 0, \label{eq:VolterraOperatorA}\\
k_{0}(x,y)  =&-\log (y) + \log (x),                            &y\leq x,
\textup{ if }  \nu = 0.   \label{eq:VolterraOperatorB}
\end{align}
We view $K_\nu V$ as an operator on the Banach space $C[0,1]$. Indeed, for any $\phi \in C[0,1]$ one easily checks
\begin{align}
|K_\nu V\phi(x)| &\leq   \frac{1}{|\nu|} \int_0^x |V(y)||\phi(y)|dy,    &\textup{if }\nu\neq 0, \label{11-a} \\
|K_0 V \phi(x)|  &\leq  2\int_0^x |\log (y)||V(y)| |\phi(y)|dy,          &\textup{if }\nu=0. \label{16-a}
\end{align}
From \eqref{11-a} and \eqref{16-a} one infers by induction
\begin{align}\label{14-a}
|(K_\nu V)^n\phi(x)| &\leq \frac{1}{|\nu|^nn!}\|\phi\|_{\infty,[0,x]} \Bigl(\int_0^x |V(y)|dy\Bigr)^n,   &\textup{if }\nu\neq 0, \\
\label{18-a}
|(K_0 V)^n\phi(x)| &\leq \frac{2^n}{n!}\|\phi\|_{\infty,[0,x]} \Bigl(\int_0^x |V(y)\log y|dy\Bigr)^n,    &\textup{if }\nu =0.
\end{align}
Hence for any $\nu\in\C, \Re\nu\geq 0,$ the Volterra operator $K_\nu V$ is a bounded operator on $C[0,1]$ with spectral radius zero. Consequently the equation \eqref{22-a} has a unique solution in $C[0,1]$ given by
\begin{align}\label{24-a}
\phi=(I-K_\nu V)^{-1}K_\nu V\one.
\end{align}
Moreover, by \eqref{14-a} and \eqref{18-a} one has
\begin{align}
|\phi(x)|\leq \begin{cases} C_1\int_0^x|V(y)|dy, & \textup{if }\nu\neq0, \\
       C_2\int_0^x|V(y)\log y|dy, & \textup{if }\nu=0,  
              \end{cases} \label{32-a} 
\end{align}
for some constants $C_1,C_2,$ not depending on $V$. This proves that 
\[g_1(x)=x^{\nu_1}(1+\phi(x))=x^{\nu_1}\widetilde{g}_1(x),
\]
is indeed a non-trivial solution to $Hg=0$ with $\widetilde{g}_1\in C[0,1]$ and $\widetilde{g}_1(0)=1$. 
To see \eqref{41}, note that by \eqref{22-a} $\phi$ is absolutely continuous in $(0,1)$ with its derivative given by
\begin{equation}\label{eq:ML1003055}
\phi'(x)=x^{-2\nu-1}\int_0^xy^{2\nu}V(y)(1+\phi(y))dy, \textup{ for all } \Re \nu\ge 0. 
\end{equation}
This implies $$|\phi'(x)|\leq \frac{C}{x}\int_0^x|V(y)|dy$$ and thus \eqref{41} and the claims about $g_1$ are proved. \bigskip

The second solution $g_2$ can now be constructed as usual by putting near $x=0$
\begin{align}\label{35-a}
g_2(x)=C(x)g_1(x),
\end{align}
where 
\begin{equation}
C(x)=-\int_x^{x_0}g^{-2}_1(y)dy=\begin{cases}
    -\frac{1}{2\nu}x^{-2\nu_1+1}\widetilde{C}(x), &\textup{if } \nu\neq 0,  \\ 
    \log(x) \widetilde{C}(x), & \textup{if }\nu=0.
   \end{cases} \label{38-a}
\end{equation}
$\widetilde{C}$ is continuous over $[0,x_0)$ and $x_0\in (0,1]$ is chosen so that $\widetilde{g}_1(y)\neq 0$ 
for $0<y\leq x_0$. Such an $x_0$ exists, since $\widetilde{g}_1(0)=1$ and $\widetilde{g}_1\in C[0,1]$. 
It is then straightforward to check that $g_2$ extends to a solution to $lg=0$ on $(0,1]$ which has the claimed properties. 
\end{proof}

Now we come to the aforementioned improvement of the regularity properties of $\tilde g_j(x)$ as $x\to 0$.

\begin{lemma}\label{l:ML1003101} Let $f\in AC^2(0,1)$ with $f', X f''\in L^1[0,1]$. Then $(X f')(0)=0$.
This holds in particular for $f\in\sA$ (cf. Def. \plref{def:FunctionSpaces}).
\end{lemma}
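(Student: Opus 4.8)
The plan is to derive the statement from the fundamental theorem of calculus applied to $Xf'$, whose derivative is $f' + Xf''$, combined with an elementary argument showing that $Xf'$ cannot have a nonzero limit at $0$ if it is to be consistent with $f$ being bounded (indeed continuous) near $0$. First I would observe that, since $f\in AC^2(0,1)$ means $f,f'\in C(0,1)$ and $f''\in L^1_\loc(0,1)$, the function $g:=Xf'$ is locally absolutely continuous on $(0,1)$ with $g'(x)=f'(x)+xf''(x)$ for almost every $x$. By hypothesis $f'\in L^1[0,1]$ and $Xf''\in L^1[0,1]$, hence $g'\in L^1[0,1]$ and therefore $g\in AC[0,1]$; in particular the limit $L:=\lim_{x\to 0}(Xf')(x)=g(0)$ exists and is finite, being given by $g(c)-\int_0^c g'(t)\,dt$ for any $c\in(0,1)$.

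Next I would show $L=0$. Suppose $L\neq 0$. Then $f'(x)=g(x)/x$ with $g(x)\to L$, so $f'(x)\sim L/x$ as $x\to 0$, and since $1/x\notin L^1[0,x_0]$ this already contradicts $f'\in L^1[0,1]$. (Alternatively, integrating $f'$ from $x$ to $c$ would force $f(x)\sim -L\log x\to\pm\infty$, contradicting $f\in C(0,1)\cap L^1[0,1]$; but the $L^1$ contradiction for $f'$ is the cleanest.) Hence $L=0$, i.e. $(Xf')(0)=0$, which is the claim.

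Finally, for the last sentence of the lemma, I would note that if $f\in\sA$ then by definition $f\in AC^2(0,1)$ with $f',Xf''\in\llog\ii L^1[0,1]\subset L^1[0,1]$ (since $\llog\ge 1$ on $(0,1]$, so $\llog\ii L^1[0,1]\subset L^1[0,1]$), so the hypotheses of the lemma are met and $(Xf')(0)=0$ follows. The only mildly delicate point is the very first step — justifying that $g=Xf'$ is absolutely continuous on the closed interval $[0,1]$ from the integrability of its a.e.-derivative — but this is the standard characterization of $AC[0,1]$ as the primitives of $L^1$ functions, applied on $[c,1]$ for shrinking $c$ together with the established finiteness of $\lim_{x\to 0}g(x)$; no real obstacle arises. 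I expect no serious difficulty in this lemma; it is purely a bookkeeping argument with the function spaces of Definition \ref{def:FunctionSpaces}.
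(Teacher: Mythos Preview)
Your proof is correct and follows essentially the same line as the paper: both show first that $g=Xf'$ extends to $AC[0,1]$ because $g'=f'+Xf''\in L^1[0,1]$, so the limit $L=g(0)$ exists, and then derive a contradiction from $L\neq 0$. Your primary contradiction (that $f'(x)\sim L/x$ directly violates $f'\in L^1[0,1]$) is slightly more direct than the paper's, which integrates once more to obtain $f(1)-f(x)=-L\log x+o(\log x)$ and contradicts the boundedness of $\int_x^1 f'$; amusingly, you mention exactly this integration argument as your alternative route.
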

\begin{proof} By assumption the function $F(x):= x f'(x)-\int_0^x h(s) ds, h:=(X f')',$
is locally absolutely continuous and $F'=0$, hence $F=(X f')(0)=:c$ is constant.
Thus
\begin{equation}
     f'(x)=\frac cx + \frac 1x \int_0^x h(s)ds.
\end{equation}
By assumption we have $f', h\in L^1[0,1]$. Thus 
\begin{equation}\label{eq:ML100206-2}
     \begin{split}
          f(1)-f(x)&=\int_x^1 f'(s)ds\\
                  &= -c \log x - \log x \int_0^x h(s)ds- \int_x^1 h(s) \log s \; ds\\
                  &= -c \log x + o(\log x),\quad \text{ as } x\to 0,
\end{split}
\end{equation}
since for $0<\delta<1$ we have $|\int_x^1 h(s)\log s\, ds|\le C_\delta+|\log x|\int_0^\delta |h|.$
Because the left hand side of \eqref{eq:ML100206-2} is bounded it follows that $c=0$.

The last claim follows, since $f\in\sA$ implies $f',(X f')'\in L^1[0,1]$.
\end{proof}

\begin{lemma}\label{les-2}
Let $\A\in \C$ and $\rho\in\llog \sV_{\Re \ga}$.
Put 
\begin{equation}
f(x):=\begin{cases}
x^{-\A-1}\int\limits_0^xy^{\A}\rho(y)dy, & \textup{if }\Re(\A)\geq 0, \\
-x^{-\A-1}\int\limits_x^1 y^{\A}\rho(y)dy, & \textup{if }\Re(\A)< 0.
\end{cases}
\end{equation}
Then we have 
$$f\in L^1[0,1]\cap AC(0,1), \quad X  f'\in L^1[0,1], \quad (X f)(0)=0.$$
If $\rho\in\sV_{\Re \ga}$ then $f, X f' \in \llog\ii L^1[0,1],$ that is
$\int_0^\cdot f \in \sA$.
\end{lemma}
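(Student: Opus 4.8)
The plan is to reduce the whole statement to a single $L^{1}$-estimate for $f$ by means of a first-order differential identity, and then to establish that estimate (and its logarithmically weighted refinement) by Tonelli's theorem together with elementary power-integral bounds. Throughout write $\A$ for the parameter of the lemma and set $\beta:=\Re\A$. Unwinding the definitions, $\rho\in\llog\sV_{\beta}$ means $\rho/\llog\in\sV_{\beta}$, i.e.\ $\rho\in L^{1}[0,1]$ if $\beta\ne0$ and $\rho\,\llog\in L^{1}[0,1]$ if $\beta=0$; in either case $\rho\in L^{1}[0,1]$. The stronger hypothesis $\rho\in\sV_{\beta}$ means $\rho\,\llog\in L^{1}$ if $\beta\ne0$ and $\rho\,\llog^{2}\in L^{1}$ if $\beta=0$.

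\emph{Step 1 (a differential identity).} Put $G(x):=\int_{0}^{x}y^{\A}\rho(y)\,dy$ if $\beta\ge0$ and $G(x):=-\int_{x}^{1}y^{\A}\rho(y)\,dy$ if $\beta<0$, so that $f(x)=x^{-\A-1}G(x)$ in both cases. Since $\rho\in L^{1}[0,1]$ we have $y^{\A}\rho\in L^{1}_{\loc}(0,1)$, so $G$ is locally absolutely continuous on $(0,1)$ with $G'=y^{\A}\rho$ a.e.; as $x^{-\A-1}$ is smooth and non-vanishing on $(0,1)$ this already gives $f\in C(0,1)\cap AC(0,1)$. Applying the product rule and multiplying through by $X$ gives, in both cases (using $G=-x^{\A+1}f$ when $\beta<0$),
\[
   X f' \;=\; -(\A+1)\,f\;+\;\rho \qquad\text{a.e. on }(0,1).
\]
Hence $X f'\in L^{1}[0,1]$ as soon as $f\in L^{1}[0,1]$, and $X f'\in\llog\ii L^{1}[0,1]$ as soon as $f,\rho\in\llog\ii L^{1}[0,1]$ — the latter being guaranteed by $\rho\in\sV_{\beta}$. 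Thus everything reduces to controlling $\int_{0}^{1}|f|$, respectively $\int_{0}^{1}|f|\,\llog$.

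\emph{Step 2 (the $L^{1}$-bounds).} For $\beta\ge0$ one has $|f(x)|\le x^{-\beta-1}\int_{0}^{x}y^{\beta}|\rho(y)|\,dy$; integrating over $x\in(0,1)$ and applying Tonelli gives $\int_{0}^{1}|f|\le\int_{0}^{1}y^{\beta}|\rho(y)|\bigl(\int_{y}^{1}x^{-\beta-1}\,dx\bigr)\,dy$. If $\beta>0$ the inner integral is at most $\beta^{-1}y^{-\beta}$, which absorbs the prefactor and leaves $\beta^{-1}\|\rho\|_{L^{1}}<\infty$; if $\beta=0$ it equals $|\log y|\le\llog(y)$, and $\rho\,\llog\in L^{1}$ is exactly what membership in $\llog\sV_{0}$ provides. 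The case $\beta<0$ is symmetric: $|f(x)|\le x^{-\beta-1}\int_{x}^{1}y^{\beta}|\rho(y)|\,dy$, and after swapping one meets $\int_{0}^{y}x^{-\beta-1}\,dx=|\beta|^{-1}y^{-\beta}$, which again cancels the prefactor. For the refinement one repeats these computations carrying an extra weight $\llog(x)$ inside the $x$-integral, using the monotonicity $\llog(x)\le C\,\llog(y)$ for $0<y\le x\le1$ (so the weight comes out as $\llog(y)$ when $\beta\ne0$, and $\rho\,\llog\in L^{1}$ finishes) and, for $\beta=0$, the elementary bound $\int_{y}^{1}\llog(x)\,x^{-1}\,dx\le C\,\llog(y)^{2}$ — which is precisely why the $\llog^{2}$ in the definition of $\sV_{0}$ is the right hypothesis; the analogous bound $\int_{0}^{y}\llog(x)\,x^{-\beta-1}\,dx\le C\,y^{-\beta}\llog(y)$ handles $\beta<0$.

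\emph{Step 3 (boundary value and conclusion).} For $\beta\ge0$ the vanishing $(Xf)(0)=0$ is immediate: $|(Xf)(x)|\le x^{-\beta}\int_{0}^{x}y^{\beta}|\rho|\le\int_{0}^{x}|\rho|\to0$. For $\beta<0$ the cleanest route is Lemma \plref{l:ML1003101}: with $\Phi:=\int_{0}^{\cdot}f$ (well-defined because $f\in L^{1}[0,1]$), Steps 1--2 yield $\Phi\in AC^{2}(0,1)$ with $\Phi'=f\in L^{1}[0,1]$ and $X\Phi''=Xf'\in L^{1}[0,1]$, hence $(X\Phi')(0)=(Xf)(0)=0$. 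The last assertion $\int_{0}^{\cdot}f\in\sA$ is then just a matter of reading off Definition \plref{def:FunctionSpaces}, since $\Phi'=f$ and $X\Phi''=Xf'$ were shown to lie in $\llog\ii L^{1}[0,1]$ under $\rho\in\sV_{\beta}$. The only point requiring genuine care is the borderline exponent $\beta=0$ in Step 2, where passing through the kernel $x^{-1}$ converts one logarithmic weight into two; I expect this (mild) bookkeeping, together with the routine justification of the a.e.\ product rule for $f'$ when $\rho$ is merely locally integrable, to be the main obstacle.
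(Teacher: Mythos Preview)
Your proof is correct and follows essentially the same route as the paper: both establish the $L^1$-bound for $f$ first (the paper invokes ``integration by parts'' where you use Tonelli, but for these positive kernels the two amount to the same computation), then derive $Xf'\in L^1$ from the identity $Xf'=-(\alpha+1)f+\rho$, and finally obtain $(Xf)(0)=0$ by applying Lemma~\ref{l:ML1003101} to $\Phi=\int_0^{\cdot} f$. Your worries at the end are unfounded: the product rule for $f=x^{-\alpha-1}G$ with $G$ locally absolutely continuous is standard, and your $\beta=0$ bookkeeping is exactly right (the extra $\llog$ in the kernel is precisely what forces the $\llog^2$-condition in $\sV_0$).
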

\begin{proof}
Integration by parts shows easily that $f\in L^1[0,1]$ (resp. $f\in \llog\ii L^1[0,1]$ if $\rho \in \sV_{\Re \ga}$). 
Moreover, clearly $f$ and hence also $X  f$ are both locally absolutely continuous in the interval $(0,1]$. 
Furthermore, we have 
\begin{align*}
X f'=(X f)'-f=-(\A+1) f +\rho \in L^1[0,1] \\
\text{ (resp. } \in \llog\ii L^1[0,1]  \textup{ if }  \rho \in \sV_{\Re \ga}).
\end{align*}
$(X f)(0)=0$ follows from Lemma \ref{l:ML1003101} applied to $\int_0^\cdot f$.
\end{proof}


\begin{prop}\label{les-3}
In the setup and notation of Theorem \plref{5-a-t} we have for $j=1,2$
$$\tilde{g}_j\in AC[0,1], \ X  \widetilde{g}_j '\in AC[0,1], \ (X \widetilde{g}_j ')(0)=0.$$
\end{prop}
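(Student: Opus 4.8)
The plan is to extract from the proof of Theorem~\ref{5-a-t} integral representations for $\tilde g_1'$ and $\tilde g_2'$ and feed them into Lemma~\ref{les-2}. Write $\mathcal G$ for the class of functions $h$ with $h\in AC[0,1]$, $Xh'\in AC[0,1]$ and $(Xh')(0)=0$; this is exactly what must be shown for the $\tilde g_j$. Since $Hg=0$ has $L^1$-coefficients away from $0$, every solution lies in $AC^2(0,1]$, so each $\tilde g_j$ (equal to $x^{-\nu_j}g_j$, times $(\log x)^{-1}$ when $\nu=0$) belongs to $AC^2[x_0,1]$ for every $x_0\in(0,1)$, where the condition at $0$ is vacuous; as $AC$-regularity patches across $x_0$, it suffices to argue on a neighbourhood $[0,x_0]$ of $0$.

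\emph{First solution.} Formula~\eqref{eq:ML1003055} already writes $\tilde g_1'=\phi'$ in the form $\tilde g_1'(x)=x^{-2\nu-1}\int_0^xy^{2\nu}\rho(y)\,dy$ with $\rho:=V\tilde g_1$. Because $\tilde g_1\in C[0,1]$, the function $\rho$ lies in the same space as $V$, namely $\llog\sV_{\Re\nu}=\llog\sV_{2\Re\nu}$ ($\sV$ depending only on whether its index vanishes). Apply Lemma~\ref{les-2} with $\A=2\nu$, case $\Re\A\ge0$: its $f$ is precisely $\tilde g_1'$, so $\tilde g_1'\in L^1[0,1]$, $X\tilde g_1''=Xf'\in L^1[0,1]$ and $(X\tilde g_1')(0)=(Xf)(0)=0$. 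Since $\tilde g_1$ is continuous, $\phi(0)=0$ and $\tilde g_1'\in L^1$, we get $\tilde g_1\in AC[0,1]$; and $X\tilde g_1'$ is locally absolutely continuous on $(0,1]$ with $(X\tilde g_1')'=\tilde g_1'+X\tilde g_1''\in L^1[0,1]$ and boundary value $0$, hence $X\tilde g_1'\in AC[0,1]$, vanishing at $0$.

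\emph{Second solution.} Write $g_2=f_2(1+\phi_2)$, $\phi_2:=\tilde g_2-1\in C[0,1]$, $\phi_2(0)=0$, $\phi_2\in AC^2$ near $0$. Inserting this into $Hg_2=0$ and simplifying (as in the proof of Theorem~\ref{5-a-t}, with $f_1$ replaced by $f_2$) gives, on $(0,1]$,
\[
(x^{2\nu_2}\phi_2')'=x^{2\nu_2-1}\,V\tilde g_2\ \ (\nu\neq0),\qquad (x(\log x)^2\phi_2')'=(\log x)^2\,V\tilde g_2\ \ (\nu=0).
\]
For $\nu\neq0$, integrate from a fixed $x_0\in(0,1)$; using $2\nu_2-1=-2\nu$ and splitting the integral one obtains $\phi_2'=c\,x^{2\nu-1}+f$ with $c$ a constant and $f$ exactly the function of Lemma~\ref{les-2} for $\A=-2\nu$ applied to $\rho:=V\tilde g_2\in\llog\sV_{\Re\A}$: split at $1$ if $\Re\nu>0$ (then $\Re\A<0$, $f(x)=-x^{2\nu-1}\int_x^1y^{-2\nu}\rho$), split at $0$ if $\Re\nu=0$ (then $\Re\A=0$, $f(x)=x^{2\nu-1}\int_0^xy^{-2\nu}\rho$). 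If $\Re\nu>0$, then $x^{2\nu-1}\in L^1[0,1]$ and $x^{2\nu}\in\mathcal G$, so together with Lemma~\ref{les-2} ($f\in L^1[0,1]$, $Xf\in AC[0,1]$ vanishing at $0$) we get $\phi_2'\in L^1$ and $X\phi_2'=c\,x^{2\nu}+Xf\in AC[0,1]$ vanishing at $0$, i.e.\ $\tilde g_2=1+\phi_2\in\mathcal G$. If $\Re\nu=0$ and $\nu\neq0$, one first shows $c=0$: otherwise $\phi_2-\int_0^{\,\cdot}f$ would be a primitive of $c\,x^{2\nu-1}=c\,x^{-1}e^{2i(\Im\nu)\log x}$, which oscillates without limit as $x\to0$, contradicting the continuity at $0$ of $\phi_2$ and of $\int_0^{\,\cdot}f$; then $\phi_2'=f$ and one finishes as before. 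For $\nu=0$, the second relation gives $\sigma_2(x):=X\tilde g_2'(x)=x\phi_2'(x)=(\log x)^{-2}\bigl(\kappa+\int_{x_0}^x(\log y)^2\rho(y)\,dy\bigr)$ with $\rho:=V\tilde g_2\in\llog\sV_0$; using $(\log y)^2\le|\log y|\,|\log x|$ for $x\le y\le x_0$ gives $|\sigma_2(x)|\le|\kappa|(\log x)^{-2}+|\log x|^{-1}\|\llog\rho\|_{L^1}\to0$. Since $\int_0^{x_0}x^{-1}|\log x|^{-2}dx$ and $\int_0^{x_0}x^{-1}|\log x|^{-3}dx$ are finite one gets $\sigma_2'\in L^1[0,x_0]$, hence $X\tilde g_2'=\sigma_2\in AC[0,x_0]$ vanishing at $0$; and a Fubini estimate (keeping $|P(x)|\le|\kappa|+\int_x^{x_0}(\log y)^2|\rho|$ rather than the crude bound) gives $\phi_2'=x^{-1}\sigma_2\in L^1[0,x_0]$, so again $\tilde g_2=1+\phi_2\in\mathcal G$.

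The main obstacle is the analysis of the second solution at the singular point: one must isolate its dominant term and choose the base point of integration so that Lemma~\ref{les-2} applies, and the two borderline regimes ($\Re\nu=0$ with $\nu\neq0$, and especially $\nu=0$) are not handled by a bare appeal to that lemma but require the small extra arguments indicated — the oscillation argument, respectively the explicit $\log$-weight estimates.
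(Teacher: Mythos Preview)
Your argument is correct. The treatment of $\tilde g_1$ is the same as the paper's: both read off~\eqref{eq:ML1003055} and feed it into Lemma~\ref{les-2} with $\alpha=2\nu$.

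For $\tilde g_2$ the two proofs diverge. The paper starts from the variation-of-constants representation $g_2=-g_1\int_x^{x_0}g_1^{-2}$, writes $\tilde g_2=\tilde g_1\cdot f$, and after one integration by parts obtains
\[
f(x)=c(x_0)\,x^{2\nu}+\tilde g_1(x)^{-2}+x^{2\nu}\int_x^{x_0}y^{-2\nu}\bigl(\tilde g_1^{-2}\bigr)'(y)\,dy;
\]
the first two summands are handled by inspection and the last via Lemma~\ref{les-2} with $\rho=(\tilde g_1^{-2})'$, which is in $L^1$ once $\tilde g_1'\in L^1$ is known. You instead write $g_2=f_2(1+\phi_2)$ with $f_2$ the \emph{model} solution, derive $(f_2^{\,2}\phi_2')'=x^{-1}Vf_2^{\,2}\tilde g_2$, integrate once, and apply Lemma~\ref{les-2} with $\rho=V\tilde g_2$. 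Your route is more symmetric---it treats $g_2$ parallel to $g_1$ and does not need the $\tilde g_1$-result as input---but the undetermined constant of integration $c\,x^{2\nu-1}$ is the price: hence your oscillation argument when $\Re\nu=0$, $\nu\neq0$. The paper's route produces an \emph{explicit} boundary term $c(x_0)x^{2\nu}$ from the integration by parts, so (at least for $\Re\nu>0$, where that term is visibly in $AC[0,x_0]$) no such side argument is needed. Both treatments isolate the case $\nu=0$; your Fubini estimate there plays the role of the paper's direct computation with $\log$-weights.
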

\begin{proof}
We have for the first fundamental solution
$$g_1(x)=x^{\nu_1}\widetilde{g}_1=x^{\nu_1}(1+\phi(x)),$$
where by $\phi$ is given by \eqref{22-a}.
The claim about $\tilde g_1$ now follows from Lemma \ref{les-2} and the explicit
form of the derivative \eqref{eq:ML1003055}.
%
To prove the claim for $\widetilde{g}_2$, recall that for some $x_0\in (0,1]$, such that $g_1(y)\neq 0$ for $0<y\leq x_0$, 
the second fundamental solution is given by 
\[
g_2(x)=-g_1(x)\int_x^{x_0}g_1(y)^{-2}dy.
\]
If $\nu \neq 0$, then
\begin{equation}\label{les-nr3}
\begin{split}
\widetilde{g}_2(x)&=-2\nu x^{\nu-1/2}g_2(x)\\
                  &=2\nu \widetilde{g}_1(x)x^{2\nu}\int_x^{x_0}y^{-2\nu-1}\widetilde{g}_1(y)^{-2}dy=:\widetilde{g}_1(x)f(x).
\end{split}
\end{equation}
In view of the statement being proved for $\widetilde{g}_1$ before and since the claimed properties are preserved
under multiplication, it suffices to prove the claim for $f$. Integration by parts gives
\begin{align}\label{les-nr2}
f(x)&=-x^{2\nu}\left.(y^{-2\nu}\widetilde{g}_1(y)^{-2})\right|_x^{x_0}
                  + x^{2\nu}\int_x^{x_0}y^{-2\nu}\rho(y)dy\nonumber\\
    &=c(x_0)x^{2\nu} + \widetilde{g}_1(x)^{-2} +
    x^{2\nu}\int_x^{x_0}y^{-2\nu}\rho(y)dy,
\end{align}
where $\rho =(\widetilde{g}_1^{-2})'\in L^1[0,x_0]$.

The first two summands are a priori in $AC[0,x_0]$. The last one is $AC[0,x_0]$ by Lemma \ref{les-2}. 
Furthermore, from the definition we infer 
\[
X  f'=2\nu f-2\nu \widetilde{g}_1^{-2}\in AC[0,x_0].
\]
$(X f')(0)=0$ then follows from Lemma \plref{l:ML1003101}.

Clearly, $\widetilde{g}_2$ and $X \widetilde{g}_2'$ are locally absolutely continuous in the whole interval $(0,1]$ and hence the claim is
proved for $\tilde g_2$ and $\nu\not=0$. 

Finally, for $\nu=0$ we have
\begin{align*}
\widetilde{g}_2(x)=&-\frac{1}{\sqrt{x}\log x} g_1(x)\int_x^{x_0}y^{-1}\widetilde{g}_1(y)^{-2}dy \\
=&-\frac{1}{\log x} \widetilde{g}_1(x)\int_x^{x_0}y^{-1}\widetilde{g}_1(y)^{-2}dy =:\widetilde{g}_1(x)f(x).
\end{align*}

Again, it suffices to prove the claim for $f$. We compute with 
$\rho:=(\widetilde{g}_1^{-2})'\in L^1[0,x_0]$ as before 
\begin{align*}
f(x)=&-\frac{1}{\log x} \int_x^{x_0}y^{-1}\widetilde{g}_1(y)^{-2}dy \\
=&-\frac{1}{\log x}\left.\log (y)\widetilde{g}_1(y)^{-2}\right|_x^{x_0}+ \frac{1}{\log x}\int_x^{x_0}\log (y) \rho(y) dy \\
=&\frac{c(x_0)}{\log x} + \widetilde{g}_1(x)^{-2} + \frac{1}{\log x}\int_x^{x_0}\log (y) \rho(y) dy.
\end{align*}

From this one checks that $f\in AC[0,x_0]$ and hence $\tilde g_2\in AC[0,1]$. 
Furthermore
\[
xf'(x)=-\frac{c(x_0)}{\log^2 x}-\frac{1}{\log ^2(x)}\int_x^{x_0}\log(y)\rho(y)dy,
\]
and differentiating this again shows $(X f')'\in L^1[0,1]$.
The remaining claims now follow as in the case $\nu\not=0$.
%
\end{proof}

Finally we prove the following refinement of the properties of $\widetilde{g}_j, j=1,2$, which will be crucial for the rest of the paper. 
\begin{prop}\label{les-4}
Let $V\in\sV_{\Re \nu}$. Then, in the notation of Theorem \plref{5-a-t} 
we have for $j=1,2$ 
\[
\tilde g_j'\, \llog, X\,  \tilde g_j''\,  \llog \in L^1[0,1],
\]
i.e. $\tilde g_j\in\sA$.
\end{prop}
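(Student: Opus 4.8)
The plan is to run the proof of Proposition \ref{les-3} a second time, but now extracting from Lemma \ref{les-2} its \emph{second}, quantitative conclusion ``$\int_0^{\cdot}f\in\sA$'' rather than only the qualitative one. This is legitimate precisely because the hypothesis has been strengthened from $V\in\llog\sV_{\Re\nu}$ (as in Theorem \ref{5-a-t}) to $V\in\sV_{\Re\nu}$. Thus the whole argument is a mechanical upgrade of Proposition \ref{les-3}, with one genuinely delicate point postponed to the case $\nu=0$.

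First I would dispose of $\tilde g_1$. By Proposition \ref{les-3} we already know $\tilde g_1=1+\phi$ with $\phi\in AC[0,1]$, $\phi(0)=0$, and $\phi'\in L^1[0,1]$ given explicitly by \eqref{eq:ML1003055}, i.e. $\phi'(x)=x^{-2\nu-1}\int_0^x y^{2\nu}\rho(y)\,dy$ with $\rho:=V\cdot(1+\phi)$. Since $1+\phi$ is bounded and continuous, $\rho\in\sV_{\Re\nu}$; observing moreover that $\sV_{\Re(2\nu)}=\sV_{\Re\nu}$ (the defining condition depends only on whether $\Re\nu=0$), Lemma \ref{les-2} applied with $\A=2\nu$ to $\rho\in\sV_{\Re(2\nu)}$ shows that $\phi'$, being of the form of the function treated there, satisfies $\phi',X\phi''\in\llog\ii L^1[0,1]$; equivalently $\phi=\int_0^{\cdot}\phi'\in\sA$, hence $\tilde g_1\in\sA$.

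The second solution is more work. I would first record the two elementary closure properties of $\sA$ that are needed throughout: $\sA$ is an algebra, and if $u\in\sA$ with $u(0)\neq0$ then $u^{-1}\in\sA$ on a one-sided neighbourhood of $0$ on which $u$ does not vanish. Both follow from the Leibniz rule once one recalls, from the proof of Proposition \ref{les-3} (via Lemma \ref{l:ML1003101}), that $Xu'$ is \emph{bounded} for every $u\in\sA$; this is what keeps the cross-terms $Xu'v'$ and $X(u')^2u^{-3}$ in $\llog\ii L^1$. Consequently, choosing $x_0$ so that $\tilde g_1$ is zero-free on $(0,x_0]$, we get $\tilde g_1^{-2}\in\sA$, and hence $\rho:=(\tilde g_1^{-2})'=-2\tilde g_1^{-3}\tilde g_1'$ satisfies $\rho\,\llog\in L^1$. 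Now I would repeat the reduction-of-order computation from the proof of Proposition \ref{les-3}: writing $\tilde g_2=\tilde g_1 f$ near $0$ and integrating by parts as in \eqref{les-nr2}, $f$ becomes the sum of a constant multiple of $x^{2\nu}$, of $\tilde g_1^{-2}$, and of the integral term $x^{2\nu}\int_x^{x_0}y^{-2\nu}\rho(y)\,dy$. When $\nu\neq0$ (so $\Re\nu>0$) the monomial $x^{2\nu}$ lies in $\sA$, the middle term was just dealt with, the integral term lands in $\sA$ by a further application of Lemma \ref{les-2} to $X\rho$ — whose quantitative conclusion is now available — and, using the identity $Xf'=2\nu f-2\nu\tilde g_1^{-2}$ to control one more derivative, one checks $f',Xf''\in\llog\ii L^1$. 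Thus $f\in\sA$ and $\tilde g_2=\tilde g_1 f\in\sA$.

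The genuinely delicate case is $\nu=0$, where $x^{2\nu}$ is replaced by $1/\log x\notin\sA$, so the decomposition above breaks down. The remedy is to exploit that $g_2$ is only determined up to a multiple of $g_1$: choosing this multiple so as to cancel the stray $1/\log x$ term one is left, after the analogous integrations by parts, with $\tilde g_2=1+p(x)+c\,\phi(x)/\log x$, where $p(x)=\frac{1}{\log x}\int_0^x\frac{\tilde g_1(y)^{-2}-1}{y}\,dy$ and $c$ is a constant. One then verifies $p,\ \phi/\log x\in\sA$ by hand, using the $\nu=0$ bound $|\phi(x)|\le C\int_0^x|V(y)\log y|\,dy$ from \eqref{32-a} together with the fact that $V\in\sV_0$ carries the weight $\llog^2$, which is exactly what absorbs the one extra logarithm produced on dividing by $\log x$. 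This bookkeeping of logarithmic factors — and the necessity of replacing $g_2$ by an admissible variant to keep it inside $\sA$ — is the main obstacle; everything else is a routine rerun of Proposition \ref{les-3} with the quantitative half of Lemma \ref{les-2}.
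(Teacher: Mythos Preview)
Your proof for $\nu\neq 0$ is essentially the paper's: both bootstrap Proposition \ref{les-3} by invoking the quantitative (``$\int_0^\cdot f\in\sA$'') conclusion of Lemma \ref{les-2}, which is available precisely because the hypothesis was strengthened to $V\in\sV_{\Re\nu}$. The paper differentiates \eqref{les-nr2} to obtain $f'$ explicitly as in \eqref{eq:ML1003122} and then applies Lemma \ref{les-2} to the integral summand; you instead first record that $\sA$ is an algebra closed under inversion (using the boundedness of $Xu'$ from Lemma \ref{l:ML1003101}) and then place each summand of $f$ from \eqref{les-nr2} in $\sA$. These are the same ingredients, and the paper in fact uses your algebra observation implicitly when it concludes with ``one easily checks the claimed properties for the product $\tilde g_2=\tilde g_1 f$.''

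Your $\nu=0$ sketch goes beyond the paper, which explicitly leaves that case to the reader and notes it is not used. One caution: your remedy of replacing $g_2$ by $g_2+\lambda g_1$ to kill the stray $1/\log x$ term changes $\tilde g_2$, so strictly speaking you would be proving membership in $\sA$ for a particular choice of the non-canonical second solution (cf.\ Remark \ref{rem:ML20100430}) rather than for the $g_2$ built in \eqref{35-a}--\eqref{38-a} with an arbitrary $x_0$. Also, your parenthetical ``$\nu\neq 0$ (so $\Re\nu>0$)'' is not literally correct for purely imaginary $\nu$, though the paper's proof tacitly makes the same assumption when it asserts $x^{2\nu-1}\llog\in L^1$.
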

\begin{proof} We prove the result only for $\nu\not=0$ and leave the case
$\nu=0$ to the reader. The result for $\nu=0$ will not be used in the rest of the paper.

Recall that $g_1(x)=x^{\nu_1}\tilde g_1(x)=x^{\nu_1}(1+\phi(x)),$
where $\phi\in AC[0,1]$ (Prop. \plref{les-3}) is given by \eqref{22-a} and observe that by \eqref{eq:ML1003055}
we have
\begin{equation}
\phi'(x)=x^{-2\nu-1}\int\limits_0^xy^{2\nu} r(y) \, dy,\label{eq:ML1003121}
\end{equation}
with $r:=V\cdot (1+\phi)$, $r\llog\in L^1[0,1]$ since $V\in\sV_{\Re\nu}$. 
The claims about $\tilde g_1$ now follow from \eqref{eq:ML1003121} and Lemma \plref{les-2}.

Recall from \eqref{les-nr3} $\tilde g_2(x)=\tilde g_1(x) f(x)$ with $f$ given by \eqref{les-nr2}. Differentiating the latter
we find
\begin{equation}
f'(x)=\widetilde{c}(x_0)x^{2\nu-1} +2\nu x^{2\nu-1}\int_x^{x_0}y^{-2\nu}(\widetilde{g}_1^{-2})'(y)dy.\label{eq:ML1003122}
\end{equation}
The first summand is still in $L^1[0,x_0]$ after multiplying by $\llog$. 
For the second summand note that $(\tilde{g}_1^{-2})'\,
\llog=-2\tilde{g}_1^{-3} \tilde{g}_1'\, \llog \in L^1[0,x_0]$ by the
inclusion $\tilde g_1'\, \llog \in L^1[0,1]$, proved above.
Hence we can apply Lemma \ref{les-2} to the second summand to conclude $f'\, \llog \in L^1[0,x_0]$.
Differentiating \eqref{eq:ML1003122} we infer similarly $X f''\,  \llog\in L^1[0,x_0]$.
Then one easily checks the claimed properties for the product $\tilde g_2=\tilde g_1 f$.
Since $\tilde g_2'\,  \llog$ and $X\, \tilde g_2''\,  \llog$ are
locally integrable in the interval $(0,1]$ we reach the conclusion.
\end{proof}

\section{The maximal domain of regular singular operators}\label{s-a}
\label{sec:MaxDom}

We continue in the notation of the preceding section and consider
the regular singular Sturm-Liouville operator $H$ with the fundamental 
system $(g_1,g_2)$ of solutions to the differential equation $Hg=0$ (cf. Theorem \plref{5-a-t}). 
We will freely use the notation introduced in Section \plref{ss:SepBouCon}.

We have the following characterization of the maximal domain of $H$, 
compare \cite{BruSee:ITF} and \cite{Che:SGS} and the basic discussion of the second author 
in \cite[Proposition 2.10]{Ver:ZDR}. Note that it holds under a slightly weaker
assumption on the potential $\bigl(V\in \lsV_\nu\bigr)$ than the one imposed in the rest of the paper.

\begin{theorem}\label{thm:LaplacianMaximalGeneral}
Let $l_\nu$ be the operator \eqref{1-a} and let $H=l_\nu+X^{-1} V$ with 
$V\in \lsV_\nu$, $\Re\nu\ge 0$, and let $g_1,g_2$ be the fundamental system to
$Hg=0$ of Theorem \plref{5-a-t}.
Let $f$ be a solution of the ordinary differential equation
\begin{align}\label{M91}
Hf= -&f''+qf=g\in L^2[0,1], 
\\ &q(x)=\frac{\nu^2-1/4}{x^2}+\frac{1}{x}V(x).\nonumber
\end{align}
Then $f\in AC^2(0,1]$ and 
\begin{align}\label{M92}
f(x)=c_1(f)g_1(x)+c_2(f)g_2(x)+\widetilde{f}(x),
\end{align}
for some constants $c_j(f), j=1,2,$ depending only of $f$, 
\begin{align}\label{M93}
\widetilde{f}(x)=O\bigl(x^{3/2}\log (x)\bigr), \
\widetilde{f}'(x)=O\bigl(x^{1/2}\log(x)\bigr), \ x\to 0+.
\end{align}
\end{theorem}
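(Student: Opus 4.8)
The plan is to reduce the inhomogeneous equation $Hf=g$ to a Volterra integral equation using the fundamental system $g_1,g_2$ and the variation of constants formula, and then to estimate the resulting integrals against the $L^2$ datum $g$. Since $W(g_1,g_2)=1$ by Theorem \ref{5-a-t}, the general solution of $Hf=g$ on $(0,1]$ can be written as
\begin{equation}
   f(x) = c_1 g_1(x) + c_2 g_2(x) - g_1(x)\int_x^{x_0} g_2(y) g(y)\, dy + g_2(x)\int_x^{x_0} g_1(y) g(y)\, dy,
\end{equation}
for a fixed $x_0\in(0,1]$; here the last two terms constitute a particular solution $\widetilde f$ (up to adjusting $c_1,c_2$). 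That $f\in AC^2(0,1]$ is immediate from the classical theory of linear second order ODEs with $L^1_{\loc}(0,1]$ coefficients, which applies away from $0$ since $q\in L^1_{\loc}(0,1]$ under our hypotheses on $V$. So the content of the theorem is entirely the asymptotic estimate \eqref{M93} on the particular solution $\widetilde f$ as $x\to 0+$.

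First I would record the growth of $g_1,g_2$ near $0$ from Theorem \ref{5-a-t}: $g_1(x)=O(x^{\nu_1})=O(x^{\Re\nu+1/2})$, and $g_2(x)=O(x^{-\Re\nu+1/2})$ if $\nu\ne 0$ respectively $O(\sqrt x\,|\log x|)$ if $\nu=0$; similarly for the derivatives via \eqref{41}, \eqref{41-a}. The worst case for the estimate is $\Re\nu$ small (the bound is stated with exponents $3/2$ and $1/2$, which corresponds to $\nu=0$, modulo the logarithm — note $g_2\sim \sqrt x\log x$ there, explaining the $\log x$ in \eqref{M93}). Then I would estimate each of the two integral terms. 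For the term $g_2(x)\int_x^{x_0} g_1(y)g(y)\,dy$: by Cauchy–Schwarz, $\bigl|\int_x^{x_0} g_1 g\bigr|\le \|g\|_{L^2}\bigl(\int_0^{x_0} |g_1(y)|^2 dy\bigr)^{1/2}$, which is a finite constant since $|g_1(y)|^2\sim y^{2\Re\nu+1}$ is integrable near $0$; multiplying by $|g_2(x)|=O(\sqrt x\,|\log x|)$ (taking the $\nu=0$ bound as the coarsest one to aim for, or the sharper $x^{-\Re\nu+1/2}$ in general) gives the claimed $O(x^{1/2}\log x)$. For the term $g_1(x)\int_x^{x_0} g_2(y)g(y)\,dy$: again by Cauchy–Schwarz $\bigl|\int_x^{x_0}g_2 g\bigr|\le \|g\|_{L^2}\bigl(\int_x^{x_0}|g_2(y)|^2 dy\bigr)^{1/2}$; here $|g_2(y)|^2$ behaves like $y^{-2\Re\nu+1}$ (or $y\log^2 y$ when $\nu=0$), which is still integrable near $0$, so the integral is bounded and multiplying by $g_1(x)=O(x^{\Re\nu+1/2})$ gives something even smaller. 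The derivative estimates $\widetilde f'=O(x^{1/2}\log x)$ follow the same way from the Leibniz rule applied to the variation-of-constants formula: the boundary terms from differentiating the integrals cancel (they sum to $(g_1 g_2'-g_1'g_2)g = W(g_1,g_2)\,g = 0$ contribution at the variable endpoint, more precisely the two produced terms $-g_1'(x)g_2(x)g(x)$ and $g_2'(x)g_1(x)g(x)$... wait, these do not cancel pointwise but the key point is only that $\widetilde f'$ is again a sum of $g_j'(x)$ times a bounded-by-$\|g\|_{L^2}$ integral), so one obtains $\widetilde f'(x) = O(|g_1'(x)| + |g_2'(x)|)\cdot O(1)$, and $|g_2'(x)|=O(x^{-1/2}\log x)$; multiplying appropriately with the integral bounds that now carry an extra factor vanishing at $0$ restores the claimed $O(x^{1/2}\log x)$.

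The one genuine subtlety — and the main obstacle — is the bookkeeping of the constants $c_1,c_2$: one must check that $\widetilde f$ as defined by the two integrals starting from a \emph{fixed} $x_0$ genuinely has the stated behavior, and absorb into $c_1 g_1 + c_2 g_2$ any contributions at $x_0$; equivalently, one verifies that the decomposition \eqref{M92}–\eqref{M93} is well defined and that $c_1(f), c_2(f)$ depend only on $f$ (not on $x_0$) — this uses the linear independence of $g_1, g_2$ and the fact that any two particular solutions differ by a solution of the homogeneous equation, together with the observation that $g_1$, $g_2$ are not of the form $O(x^{3/2}\log x)$ themselves, so the coefficients are uniquely pinned down. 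I expect the case distinction $\nu=0$ versus $\nu\ne 0$, and within $\nu\ne 0$ the subcase $0<\Re\nu<1$ (where both $g_1,g_2\in L^2$) versus $\Re\nu\ge 1$ (where $g_2\notin L^2$, $c_2=0$), to require a few lines each, but no new idea; the uniform bound \eqref{M93} is obtained by simply using the crudest of the resulting exponents. I would also note that the integrability of $|g_1|^2$ and $|g_2|^2$ near $0$, which is what makes the Cauchy–Schwarz step work, holds in all cases under $\Re\nu\ge 0$ precisely because $g_1\sim x^{\Re\nu+1/2}$ forces $2\Re\nu+1 > -1$ and $g_2\sim x^{-\Re\nu+1/2}$ forces $-2\Re\nu+1>-1 \iff \Re\nu<1$, with the boundary case $\Re\nu\ge 1$ handled separately as just described.
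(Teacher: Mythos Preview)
Your overall strategy---variation of constants plus Cauchy--Schwarz against the $L^2$ datum---is exactly the paper's, but the execution has a real gap, and it is not the ``bookkeeping'' issue you flag at the end.

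With your choice of a fixed base point $x_0\in(0,1]$, the particular solution you wrote down does \emph{not} satisfy \eqref{M93}. Take the term $g_2(x)\int_x^{x_0}g_1(y)g(y)\,dy$: as $x\to 0+$ the integral tends to the \emph{nonzero} constant $\int_0^{x_0}g_1 g$, so the whole term is $\sim C\,g_2(x)$, which is only $O(x^{-\Re\nu+1/2})$ (or $O(\sqrt x\,\log x)$ for $\nu=0$), never $O(x^{3/2}\log x)$. You actually wrote ``gives the claimed $O(x^{1/2}\log x)$'', but the claim for $\widetilde f$ is $O(x^{3/2}\log x)$; and your assertion that the $\nu=0$ bound for $g_2$ is the ``coarsest'' is backwards---for $\Re\nu>0$ the function $x^{-\Re\nu+1/2}$ is \emph{larger} near $0$ than $\sqrt x\,\log x$.

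The fix is not cosmetic: you must shift the base point of the integral multiplying $g_2$ to $0$ \emph{before} estimating, i.e.\ work with $g_2(x)\int_0^x g_1(y)g(y)\,dy$. Then Cauchy--Schwarz gives $\bigl(\int_0^x|g_1|^2\bigr)^{1/2}=O(x^{\Re\nu+1})$, and the product with $|g_2(x)|=O(x^{-\Re\nu+1/2})$ is $O(x^{3/2})$ as required. For the term multiplying $g_1$ one likewise takes the base point at $0$ when $0\le\Re\nu<1$ (so $g_2\in L^2$ near $0$), but must take it at $1$ when $\Re\nu\ge 1$, since then $\int_0^x|g_2|^2$ diverges; in that regime $\bigl(\int_x^1|g_2|^2\bigr)^{1/2}=O(x^{1-\Re\nu})$ (with an extra $\sqrt{|\log x|}$ at $\Re\nu=1$), and multiplying by $|g_1(x)|=O(x^{\Re\nu+1/2})$ again gives $O(x^{3/2}\log x)$. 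The derivative estimate then follows in one line: after the Leibniz-rule cancellation you noted correctly, $\widetilde f'$ has the same form with $g_j$ replaced by $g_j'$, and \eqref{41}--\eqref{41-a} supply the extra factor $x^{-1}$. Your phrase ``integral bounds that now carry an extra factor vanishing at $0$'' is only true once the base points are chosen this way; with your fixed $x_0$ they do not.
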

\begin{remark}\label{rem:ML20100430}
We emphasize that the solution $g_1$ is completely determined by the equation
\eqref{40} and therefore canonical. However this is not so for $g_2$. Surely, any function
$g_2+\lambda g_1$ also satisfies \eqref{40-a}. We mention this because as a consequence
the functional $c_2$ (!) is canonically given while $c_1$ depends on the choice of $g_2$.
\end{remark}

\begin{proof} 
We first note that it is well--known that solutions to linear differential equations with 
$L^1_\loc$ coefficients are locally absolutely continuous. 
Therefore a solution $f$ to \eqref{M91} is absolutely continuous in the interval $(0,1]$ and 
from $f''=g-qf$ one then infers that $f'$ is also absolutely continuous in $(0,1]$. 

For $x_0\in \{0,1\}$ 
\begin{align}\label{M110}
\widetilde{f}(x)=g_1(x)\int_{x_0}^x g_2(y)g(y)dy - g_2(x)\int_{0}^xg_1(y)g(y)dy
\end{align} 
is a solution to \eqref{M91}; note $W(g_1,g_2)=1$.
Depending on $\nu$ we will choose $x_0$ such that \eqref{M93} is satisfied. We first note that by applying the Cauchy-Schwarz inequality
\begin{equation}
\begin{split}
\bigl|g_2(x)\int_{0}^x g_1(y)g(y)dy\bigr| &\leq |g_2(x)|\Bigl(\int_0^x|g_1(y)|^2dy\Bigr)^{1/2}\|g\|_{L^2} \\
                                          &=\begin{cases}
                                           O\bigl(x^{3/2} \bigr), & \textup{if }\nu \neq 0, \\
                                                O\bigl(x^{3/2}\log(x)\bigr), & \textup{if }\nu = 0,
                                            \end{cases}\quad  x\to 0+.\label{M111}
\end{split}
\end{equation}
Furthermore, if $\Re\nu\geq 1$, we put $x_0=1$ and find
\begin{equation}
\begin{split}
\bigl|g_1(x)\bigr| 
\Bigl|\int_{x}^1g_2(y)g(y)dy\Bigr| &\leq C x^{\Re\nu+1/2}\Bigl(\int_x^1y^{-2\Re\nu+1}dy\Bigr)^{1/2}\|g\|_{L^2} \\
               &=\begin{cases}
                 O(x^{3/2}|\log (x)|^{1/2}), & \textup{if }\Re\nu =1, \\ 
                 O(x^{3/2}), & \textup{if }\Re\nu >1,
                  \end{cases}\quad  x\to 0+. \label{M112}
\end{split}
\end{equation}
Finally, if $0\leq \Re\nu<1$, we put $x_0=0$ and estimate
\begin{equation}
\begin{split}
\bigl|g_1(x)\bigr| \Bigl|\int_{0}^x g_2(y)g(y)dy\Bigr| 
            &\leq C x^{\Re\nu+1/2}\Bigl( \int_0^x |g_2(y)|^2 dy\Bigr)^{1/2}\|g\|_{L^2} \\
            &=\begin{cases} 
                 O\bigl(x^{3/2}\bigr), & \textup{if }\nu\neq 0, \\ 
                 O\bigl(x^{3/2}\log(x)\bigr), & \textup{if }\nu=0,
               \end{cases} \quad  x\to 0+. \label{M121}
\end{split}
\end{equation}
This proves the estimates for $\widetilde{f}(x)$. Differentiating \eqref{M110} we find 
\begin{align*}
\widetilde{f}'(x)=g'_1(x)\int_{x_0}^x g_2(y)g(y)dy - g'_2(x)\int_{0}^xg_1(y)g(y)dy,
\end{align*}
and \eqref{41}, \eqref{41-a} together with \eqref{M111},\eqref{M112} and \eqref{M121} immediately give the
claimed estimate for $\widetilde{f}'$. 
Thus $\widetilde{f}$ is a solution to \eqref{M91} satisfying \eqref{M93}. \eqref{M92} is now clear.
\end{proof}

\begin{remark}
The above proof shows that for $\nu \neq 0, \Re\nu\neq 1$, the estimate \eqref{M93} can actually be replaced by 
\begin{align}\label{M93-b}
\widetilde{f}(x)=O(x^{3/2}), \ \widetilde{f}'(x)=O(x^{1/2}), \ x\to 0+\, ,
\end{align}
and for $\Re\nu=1$ by 
\begin{align}\label{M93-c}
\widetilde{f}(x)=O(x^{3/2}|\log (x)|^{1/2}), \ \widetilde{f}'(x)=O(x^{1/2}|\log(x)|^{1/2}), \ x\to 0+.
\end{align}
\end{remark}

\begin{cor}\label{cor:MaxDom} Under the assumptions of Theorem \plref{thm:LaplacianMaximalGeneral}
there are continuous linear functionals $c_j, j=1,2$ on $\domHmax$ such that for $f\in\domHmax$
\begin{equation}\label{eq:ML100125-3}
     f=c_1(f)g_1 +c_2(f)g_2+\tilde f,
\end{equation}
with $\tilde f\in \domL(H)$ (cf. Section \plref{ss:SepBouCon}). 
Let $c_j^t$ be the corresponding functionals for $H_0^t$. Then we have for 
$f\in\domHmax, g\in\dom(H^t_{\max})$
\begin{equation}\label{eq:ML100125-4}
        \scalar{\Hmax f}{g}- \scalar{f}{H^t_{\max}g} =-\ovl{f'(1)} g(1)+ \ovl{f(1)} g'(1)
            + \ovl{c_2(f)} c_1^t(g) -\ovl{c_1(f)} c_2^t(g).
\end{equation}
Finally, $0$ is in the limit point case for $H$ if and only if $\Re\nu\ge 1$. In this
case $c_2=0$.
\end{cor}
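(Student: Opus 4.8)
The plan is to derive all three assertions from Theorem~\ref{thm:LaplacianMaximalGeneral}, one integration by parts, and an approximation argument at the singular endpoint. I would first produce the functionals. The particular solution $\widetilde f$ constructed in \eqref{M110} (with the choice $x_0\in\{0,1\}$ made there according to $\Re\nu$) depends linearly on $g=Hf$, and by the estimates \eqref{M111}--\eqref{M121} together with interior regularity near $x=1$ it depends continuously on $g$ in the graph norm; write $\mathcal G g:=\widetilde f$, a bounded right inverse of $\Hmax$ whose range consists of functions satisfying \eqref{M93} (resp.\ \eqref{M93-b}, \eqref{M93-c}). For $f\in\domHmax$ the difference $f-\mathcal G(Hf)$ solves $Hu=0$, hence equals $c_1(f)g_1+c_2(f)g_2$ with uniquely determined scalars; evaluating at $x=1/2$ and inverting the fundamental matrix of $g_1,g_2$ (nonsingular because $W(g_1,g_2)=1$) exhibits $c_1,c_2$ as continuous linear functionals on $\domHmax$, normalized so that $c_j(g_i)=\delta_{ij}$ and $c_j$ vanishes on the range of $\mathcal G$. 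This is \eqref{eq:ML100125-3}, once one knows that this range is contained in $\domL(H)$, which is the third step below.

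\emph{The Green formula \eqref{eq:ML100125-4}.} Since $f\in\domHmax$ and $g\in\dom(H^t_{\max})$ both lie in $AC^2(0,1]$, integrating by parts on $[\delta,1]$ gives
\[
\scalar{\Hmax f}{g}-\scalar{f}{H^t_{\max}g}=-\ovl{f'(1)}g(1)+\ovl{f(1)}g'(1)+\lim_{\delta\to0}\bigl(\ovl{f'(\delta)}g(\delta)-\ovl{f(\delta)}g'(\delta)\bigr),
\]
the limit existing because the left side is a convergent integral. Now substitute $f=c_1(f)g_1+c_2(f)g_2+\widetilde f$ and the corresponding decomposition of $g$ relative to the fundamental system $g_1^t,g_2^t$ of $H^t$. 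By \eqref{M93} and its refinements every product involving $\widetilde f$ or $\widetilde g$ tends to $0$ with $\delta$. For the four solution--solution terms, note that $\ovl{g_i}$ and $g_j^t$ both solve $H^tu=0$, so the Wronskian $W(\ovl{g_i},g_j^t)$ is constant; since $\ovl{g_1}$ satisfies the defining relation \eqref{40} for $H^t$ it must coincide with the canonical solution $g_1^t$ (Remark~\ref{rem:ML20100430}), and we may --- and do --- choose the non-canonical $g_2^t:=\ovl{g_2}$. With this choice $W(\ovl{g_i},g_j^t)=W(g_i^t,g_j^t)$ equals $0,1,-1,0$ for $(i,j)=(1,1),(1,2),(2,1),(2,2)$, and the limit collapses to $\ovl{c_2(f)}c_1^t(g)-\ovl{c_1(f)}c_2^t(g)$, which is \eqref{eq:ML100125-4}.

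\emph{Range of $\mathcal G$ lies in $\domL(H)$.} For $f\in\domHmax$ put $g_\epsilon:=\mathbf{1}_{[\epsilon,1]}\,Hf$ and $\widetilde f_\epsilon:=\mathcal G g_\epsilon$. The integrals in \eqref{M110} defining $\widetilde f_\epsilon$ are empty on $[0,\epsilon]$, so $\widetilde f_\epsilon$ coincides there with a homogeneous solution: it is $\equiv 0$ near $0$ when $x_0=0$ (i.e.\ $\Re\nu<1$), and a scalar multiple of $g_1$ near $0$ when $x_0=1$ (i.e.\ $\Re\nu\ge1$); in the latter case $g_1=x^{\nu_1}\widetilde g_1$ with $\widetilde g_1\in C[0,1]$ and, by \eqref{eq:ML1003055}, $g_1'=O(x^{\Re\nu_1-1})=O(x^{1/2})$. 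In either case $\widetilde f_\epsilon\in\domL(H)$: away from $0$ it is $AC^2$ and $H$ is regular, so mollification yields $\cinfz{(0,1]}$--approximants in the graph norm; if $\widetilde f_\epsilon$ does not already vanish near $0$ one first multiplies by the logarithmic cut-off $\psi_\delta(x):=\chi(\log x/\log\delta)$, where $\chi\equiv0$ on $[1,\infty)$ and $\chi\equiv1$ on $(-\infty,1/2]$, for which $|\psi_\delta'|\le C(x|\log\delta|)^{-1}$ and $|\psi_\delta''|\le C(x^2|\log\delta|)^{-1}$ on $[\delta,\sqrt\delta]$, so that $\|\psi_\delta'\widetilde f_\epsilon'\|_{L^2}+\|\psi_\delta''\widetilde f_\epsilon\|_{L^2}=O(|\log\delta|^{-1/2})\to0$ by the $O(x^{1/2})$, $O(x^{3/2})$ behaviour near $0$; hence $\psi_\delta\widetilde f_\epsilon\to\widetilde f_\epsilon$ in the graph norm. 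Finally $g_\epsilon\to Hf$ in $L^2$ and $\mathcal G$ is bounded, so $\widetilde f_\epsilon\to\widetilde f=\mathcal G(Hf)$ in the graph norm, whence $\widetilde f\in\domL(H)$.

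\emph{The dichotomy at $0$ and $c_2\equiv0$.} Near $0$ one has $|g_2|^2\asymp x^{1-2\Re\nu}$ (or $x\log^2x$ when $\nu=0$), so $g_2\in L^2[0,1]$ precisely when $\Re\nu<1$. If $\Re\nu\ge1$, then for every $f\in\domHmax$ the identity $c_2(f)g_2=f-c_1(f)g_1-\widetilde f$ has $L^2$ right-hand side and therefore forces $c_2(f)=0$; thus $c_2\equiv0$ and $f=c_1(f)g_1+\widetilde f$, where now $g_1$ has the clean decay $O(x^{3/2})$, $g_1'=O(x^{1/2})$, so $c_1(f)g_1\in\domL(H)$ by the cut-off argument of the previous step, and hence $f\in\domL(H)$: $0$ is in the limit point case. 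Conversely, if $\Re\nu<1$ then $g_2\in\domHmax$ but $g_2\notin\domL(H)$: for $h\in\domL(H)$, approximating by $\phi_n\in\cinfz{(0,1]}$ (which contribute no boundary term at $0$) and passing to the limit in the Green formula, one gets $\ovl{c_2(h)}c_1^t(g)-\ovl{c_1(h)}c_2^t(g)=0$ for all $g\in\dom(H^t_{\max})$; since $\Re\nu<1$ also puts $0$ in the limit circle case for $H^t$, the map $g\mapsto(c_1^t(g),c_2^t(g))$ is onto (take $g=g_1^t,g_2^t$), forcing $c_1(h)=c_2(h)=0$, whereas $c_2(g_2)=1$. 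Hence $\domL(H)\subsetneq\domHmax$. The main obstacle is this range-inclusion step: a cut-off at a single scale $\epsilon$ is too crude, its commutator with $H$ having $L^2$-norm bounded away from $0$; truncating the inhomogeneity first is what makes the approximants classical --- hence power-behaved --- near the singular point, after which the logarithmic-scale cut-off suffices.
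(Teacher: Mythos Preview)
Your proof is correct and covers all three assertions. It is, however, considerably more explicit than the paper's argument, which is essentially four sentences. The paper derives \eqref{eq:ML100125-4} from the Lagrange formula and the asymptotics \eqref{M93} exactly as you do; it then observes that the same two ingredients show $\tilde f\in\domL(H)$ (implicitly via the duality $\domL(H)=\bigl((H^t)\restriction\{g:g(1)=g'(1)=0\}\bigr)^*$, so that membership in $\domL(H)$ is characterized by the vanishing of the boundary pairing at $0$ against all such $g$); continuity of $c_1,c_2$ is deduced from the fact that $\domHmax/\domL(H)$ is finite dimensional and the quotient map is bounded; and the limit-point statement is read off from $g_2\in L^2\iff\Re\nu<1$.

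Your route differs in two places. First, you build the functionals concretely via the bounded right inverse $\mathcal G$ and point evaluation at an interior point, which is pleasantly explicit and sidesteps the closed-subspace/quotient argument. Second, and more substantially, you prove $\tilde f\in\domL(H)$ by a genuine approximation: truncate the inhomogeneity so that the particular solution is either zero or a multiple of $g_1$ near $0$, then kill it with a logarithmic cut-off whose commutator with $H$ is small in $L^2$ thanks to the $O(x^{3/2})$, $O(x^{1/2})$ bounds. This is a nice constructive alternative to the paper's duality shortcut. One small remark: the final ``mollification yields $\cinfz{(0,1]}$-approximants'' step still tacitly uses the standard regular-endpoint fact that any $u\in\domHmax$ supported away from $0$ with $u=u'=0$ at the left edge of its support lies in $\domL(H)$; this is routine (and is exactly what the duality characterization gives for free), but it is worth noting that your argument has not entirely eliminated that appeal. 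Your careful specification $g_2^t:=\overline{g_2}$ is also a useful clarification of something the paper leaves implicit (cf.\ Remark~\ref{rem:ML20100430}).
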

\begin{proof} \eqref{eq:ML100125-4} follows easily from \eqref{M92}, \eqref{M93} and the 
 Lagrange formula.
The formulas \eqref{eq:ML100125-4} and \eqref{M93} show that $\tilde
f\in\domL(H)$; the latter was defined in Section \ref{ss:SepBouCon}.
Now, it follows from \eqref{M92} that the quotient space $\domHmax/\domL(H)$ is spanned
by $g_1+\domL(H), g_2+\domL(H)$ if $g_2\in L^2[0,1]$ and by $g_1+\domL(H)$ if $g_2\not\in L^2[0,1]$.
This implies the continuity of the functionals $c_1,c_2$ on $\domHmax$. 
Finally, $c_2=0$ if and only if $g_2\not\in L^2[0,1]$. The latter is equivalent to $\Re \nu\ge 1$
and the claim is proved.
\end{proof}

As already outlined in Section \plref{ss:SepBouCon} we obtain a closed extension
of $H_0$ with separated boundary conditions by choosing boundary operators $B_{j,\theta_j}$
\begin{equation}\label{eq:BouOp-3}
\begin{split}
       B_{0,\theta_0}f&:= \sin\theta_0\cdot c_1(f)+ \cos\theta_0\cdot c_2(f), \\
       B_{1,\theta_1}f&:= \sin\theta_1\cdot f'(1)+ \cos\theta_1\cdot f(1),
\end{split} \quad (\theta_0,\theta_1)\in [0,\pi)^2.
\end{equation}
$B_{0,\theta_0}$ depends on the choice of a fundamental system, cf. Remark \plref{rem:ML20100430}.
To treat the limit point and limit circle cases at $0$ in a unified way we
call a pair of boundary operators \emph{admissible} if $\theta_1\in [0,\pi)$ and 
either ($\theta_0=0$ and $\Re\nu\ge 1$) or ($\theta_0\in [0,\pi)$ and $0\le
\Re \nu<1$).

Given an admissible pair $B_{j,\theta_j}, j=0,1$, of boundary operators we denote by
$H(\theta_0,\theta_1)$ the closed extension of $H_0$ with domain
\begin{equation}\label{l-t}
H(\theta_0,\theta_1):=\bigsetdef{f\in \domHmax}{ B_{j,\T_j}f=0, \, j=0,1}. 
\end{equation}
If $\nu\in \R$ and $V$ is real valued then $H_0$ is symmetric and $H(\T_0,\T_1)$ is self--adjoint. 
If $\Re\nu \ge 1$ then all self--adjoint extensions are obtained in this way. If $\Re\nu <1$ there also
exist self--adjoint extensions with non--separated boundary conditions. These extensions will not
be studied in this paper. 

\subsection{Factorizable operators}\label{ss:FactorOperator}

Next we investigate when $H$ can be factorized as $d_1 d_2$ with
$d_j=\pm \frac{d}{dx} + \frac{\go'}{\go}$. For simplicity
we confine ourselves to the case $\nu\not=0$. Clearly,
with some modifications one has similar results for $\nu=0$.

We have seen in Proposition \plref{les-4} that if $V\in\sV_{\Re \nu}$ and $\go$ is a solution to the
differential equation $H\go=0$ then $\go(x)=x^{\mu+1/2}\tilde\go(x)$
with $\tilde\go\in\sA$ and $\mu=\pm\nu$.

Conversely, let $\mu\in\C$ and $\tilde\go\in\sA$ with $\tilde\go(x)\not=0,
0\le x\le 1$, be given. Put 
\begin{equation}\label{eq:d1d2}
\begin{split}
    d_1&:=\frac{d}{dx} +\frac{\go'}{\go}=\frac{d}{dx}+\frac{\mu+1/2}{x}+
             \frac{\tilde\go'}{\tilde\go},\\
    d_2&:=-\frac{d}{dx}+\frac{\go'}{\go}.
\end{split}
\end{equation}
Then
\begin{align}
       H_{12}&:=d_1 d_2= -\frac{d^2}{dx^2}+\frac{\go''}{\go}\nonumber\\
             &= -\frac{d^2}{dx^2}+\frac{\mu^2-1/4}{x^2}+2\frac{\mu+1/2}{x}\frac{\tilde \go'}{\tilde\go}
                                          +\frac{\tilde \go''}{\tilde \go},\label{eq:H12}\\
       H_{21}&:=d_2 d_1=-\frac{d^2}{dx^2}+ 2\Bigl(\frac{\go'}{\go}\Bigr)^2-\frac{\go''}{\go},\nonumber\\
             &=-\frac{d^2}{dx^2}+\frac{(\mu+1)^2-1/4}{x^2}+2 \frac{\mu+1/2}{x} \frac{\tilde \go'}{\tilde\go}+
                 2\Bigl(\frac{\tilde \go'}{\tilde \go}\Bigr)^2-\frac{\tilde \go''}{\tilde \go},\label{eq:H21}
\end{align}
thus we have $H_{12}=l_\mu+X\ii V_{12}, H_{21}=l_{\mu+1}+X\ii V_{21}$
and using $\tilde \go\in\sA$ one directly checks $V_{12}, V_{21}\in\sV_\nu$ (see Definition \plref{def:FunctionSpaces}).
 
Hence for a given $V\in\sV_{\Re \nu}$ the operator $H=l_\nu+X\ii V$ can be factorized
in the form $H=d_1 d_2$ with $d_1,d_2$ as above if and only if there is a solution to the
homogeneous equation $H\go=0$ with $\go(x)\not=0$ for $0<x\le 1$. Indeed, if $\go$ solves $H\go=0$ and is 
nowhere vanishing in $(0,1]$, one directly verifies from the first line of \eqref{eq:H12} that $H_{12}=d_1 d_2$
coincides with $H$. Conversely, given a factorization of $H=d_1 d_2$ with $d_1,d_2$ as in \eqref{eq:d1d2} it is immediate 
that $d_2\go=0$ and thus $H\go=0$.
 
By Proposition \plref{les-4}, $\go$ is then of the form $\go(x)=x^{\pm \nu+1/2}\tilde\go(x)$
with $\go\in\sA$.

The problem is that such a nowhere vanishing solution does not necessarily exist.
However, we will be able to reduce the calculation
of the zeta-determinant to the calculation for
\emph{factorizable} operators. In fact, the main essence of the
Proposition \plref{p:FactorH} below is that although $H$ itself may not be
factorizable, it becomes factorizable after adding  a suitable
$L^2_\comp(0,1)$ potential. For such perturbations a variational
formula for the zeta-determinant will be established subsequently.

Next we investigate the separated boundary conditions for $d_j$.
For $d_j$ we can choose four possibly different closed extensions with
\emph{separated} boundary conditions: For $p,q\in\{a,r\}$ denote
by $d_{j,pq}$ the closed extension of $d_j$ with boundary condition
$p$ at the left end point and boundary condition $q$ at the right 
end point. Here, $r$ stands for the relative boundary condition and
$a$ for the absolute boundary condition. More concretely,
$d_{j,rr}=d_{j,\min}$ is the closure of $d_j$ on $\cinfz{(0,1)}$,
$d_{j,aa}=d_{j,\max}=(d_{j,\min}^t)^*$ is the maximal extension.
The domains of the mixed extensions can be characterized by
\begin{equation}
\begin{split}
   \dom(d_{j,ar})&=\bigsetdef{f\in\dom(d_{j,\max})}{f(1)=0},\\
   \dom(d_{j,ra})&=
       \bigsetdef{f\in\dom(d_{j,\max})}{f(x)=O(\sqrt{x}\; |\log(x)|^{1/2})\text{ as } t\to 0+}.
\end{split}
\end{equation}

For each choice of a closed extension $D_1$ of $d_1$ with boundary condition 
of the form $aa,rr,ar,ra$ we choose $D_2$ to be the closed extension with
dual boundary condition, i.e. $rr,aa,ra,ar$, for $d_2$. 
If $\go$ is real then this means
that $D_2=D_1^t$. We summarize case by case the corresponding boundary conditions
for $H_{12}, H_{21}$:

Note that $\go$ is a solution to the homogeneous differential equation $H_{12}g=0$. 
In the notation of Section \plref{ss:SepBouCon} we assume that 
\begin{equation}
     \go=\cos \vartheta_0 \cdot g_1 - \sin\vartheta_0\cdot g_2,
\end{equation}
with $0\le \vartheta_0<\pi$, thus $B_{0,\vartheta_0}\go=0$.
Moreover, we assume that $B_{1,\vartheta_1} \go=0$ for a $0<\vartheta_1\le\pi$.
We have to exclude $\vartheta_1=0$ because in that case $\go(1)=0$ and hence there would be
a regular singularity also at the right end point. But see Remark \plref{r:Concluding}.

For future reference we now list the 

\subsection{Separated boundary conditions for the factorized operator $D_1D_2$}\label{p:Cases}
\subsubsection*{Case I: $D_1=d_{1,rr}, D_2=d_{2,aa}$}  
$f\in\dom(D_1D_2)$ if and only if $f\in\dom(d_{2,\max})$ and $d_2f\in\dom(d_{1,\min})$. Thus one
checks that $D_1D_2=H_{12}(\vartheta_0,\vartheta_1)$ and
$D_2 D_1=H_{21}(0,0)$ is the Friedrichs extension of $H_{21}.$

\subsubsection*{Case II: $D_1=d_{1,ra}, D_2=d_{2,ar}$} Then
\[D_1 D_2=H_{12}(\vartheta_0,0), D_2 D_1=H_{21}(0,\pi-\vartheta_1).\]

\subsubsection*{Case III: $D_1=d_{1,ar}, D_2=d_{2,ra}$} Then
\[D_1 D_2=H_{12}(0,\vartheta_1), D_2 D_1=H_{21}(\pi-\vartheta_0,0).\]

\subsubsection*{Case IV: $D_1=d_{1,aa}, D_2=d_{2,rr}$} Then
\[
D_1 D_2=H_{12}(0,0), D_2 D_1=H_{21}(\pi-\vartheta_0,\pi-\vartheta_1).\]

We summarize the previous considerations in the following

\begin{prop}\label{p:FactorH} Let $V\in\sV_{\Re \nu}$ be given and let $H=l_\nu+X^{-1}V$ 
be the corresponding regular singular Sturm-Liouville operator. Suppose that we are given
admissible separated boundary conditions $B_{j,\theta_j}, j=0,1,$ for $H$. 
Then for $0<\vartheta_1<\pi$
there exists a function $\go(x)=x^{\mu+1/2}\tilde\go$
such that $\tilde\go\in\sA$ (see Def. \plref{def:FunctionSpaces}), 
$\tilde\go(x)\not=0$ for $0\le x\le 1$
and $B_{0,\theta_0}\go=0$, $B_{1,\vartheta_1}\go=0.$
Moreover, $H \go=0$ in a neighborhood of $0$ and in a neighborhood of $1$.

If $\theta_1=0$ we choose any $0<\vartheta_1<\pi$ and if $0<\theta_1<\pi$ we let
$\vartheta_1=\theta_1$. 

Putting $D_1=d_{1,ra}, D_2=d_{2,ar}$ if $0<\theta_0<\pi,\theta_1=0;$
$D_1=d_{1,rr}, D_2=d_{2,aa}$ if $0<\theta_0<\pi,\theta_1>0$; $D_1=d_{1,aa}, D_2=d_{2,rr}$ if
$\theta_0=\theta_1=0$ and $D_1=d_{1,ar}, D_2=d_{2,ra}$ if $\theta_0=0, \theta_1>0$ we have
\begin{equation}\label{eq:ML1003181}
       H(\theta_0,\theta_1)=D_1 D_2+W
\end{equation}
with $W\in L^2_{\comp}(0,1)$.

If $V$ is real then $\go$ can be chosen to be real.
\end{prop}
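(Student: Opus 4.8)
The plan is to construct $\go$ directly as a solution of a suitable homogeneous equation, modified near the endpoints so that it never vanishes on $[0,1]$, and then to read off the factorization from the case analysis in Section \ref{p:Cases}. First I would fix $\vartheta_1$ as prescribed ($\vartheta_1=\theta_1$ if $0<\theta_1<\pi$, and any $0<\vartheta_1<\pi$ if $\theta_1=0$). Choose a nontrivial solution $\go_0$ of $H\go_0=0$ satisfying $B_{0,\theta_0}\go_0=0$; by Theorem \ref{5-a-t} and Proposition \ref{les-4}, $\go_0$ is of the form $x^{\mu_0+1/2}\tilde\go_0$ with $\tilde\go_0\in\sA$, $\tilde\go_0(0)=1$, and $\mu_0=\pm\nu$ depending on $\theta_0$ as in \eqref{eq:DefMu0}. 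Since $\tilde\go_0(0)=1\ne 0$, there is $x_0\in(0,1]$ with $\tilde\go_0(y)\ne 0$ for $0\le y\le x_0$, so $\go_0$ is nonvanishing on $(0,x_0]$. Similarly choose a nontrivial solution $\go_1$ of $H\go_1=0$ with $B_{1,\vartheta_1}\go_1=0$; since $\vartheta_1\ne 0$ we have $\go_1(1)\ne 0$, and as $\go_1$ is continuous on $(0,1]$ there is $x_1\in[0,1)$ with $\go_1$ nonvanishing on $[x_1,1]$. After replacing $\go_1$ by $-\go_1$ if necessary we may assume $\go_0$ and $\go_1$ have the same sign (value) near their respective endpoints where comparison is needed; if $V$ is real these solutions can be chosen real, which I would note explicitly.

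Next I would glue: pick a smooth cutoff and interpolate to obtain a function $\go\in AC^2(0,1]$ with $\go(x)=\go_0(x)$ on $(0,a]$ for some $0<a<x_0$, $\go(x)=\go_1(x)$ on $[b,1]$ for some $x_1<b<1$ with $a<b$, and $\go(x)\ne 0$ for all $x\in(0,1]$ — this is possible since on the overlap-free middle interval $[a,b]$ we only need a nonvanishing $AC^2$ interpolation between two nonzero boundary values, e.g. a positive function if both ends are positive. Then $\go(x)=x^{\mu_0+1/2}\tilde\go(x)$ where $\tilde\go$ agrees with $\tilde\go_0\in\sA$ near $0$, is $AC^2$ on $(0,1]$ and bounded away from zero, hence $\tilde\go\in\sA$ (the $\sA$ conditions $\tilde\go',X\tilde\go''\in\llog^{-1}L^1[0,1]$ are local at $0$ and inherited from $\tilde\go_0$; on compact subsets of $(0,1]$ everything is fine). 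By construction $B_{0,\theta_0}\go=B_{0,\theta_0}\go_0=0$ and $B_{1,\vartheta_1}\go=B_{1,\vartheta_1}\go_1=0$, and $H\go=H\go_0=0$ on $(0,a]$ while $H\go=H\go_1=0$ on $[b,1]$, so $H\go$ is supported in the compact set $[a,b]\subset(0,1)$.

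Now define $d_1,d_2$ from this $\go$ by \eqref{eq:d1d2} and set $\mu:=\mu_0$; then $H_{12}:=d_1d_2=-\frac{d^2}{dx^2}+\frac{\go''}{\go}$ by the first line of \eqref{eq:H12}, and since $H\go=0$ near both endpoints, $H_{12}$ and $H=l_\nu+X^{-1}V$ agree there; their difference $W:=H(\theta_0,\theta_1)-H_{12}=(\frac{\go''}{\go}-q)$ is therefore supported in $[a,b]$, and on this compact interval $\go$ is bounded away from $0$ and $C^2$-controlled while $q=\frac{\nu^2-1/4}{x^2}+\frac1x V\in L^2[a,b]$ since $V\in L^2_{\loc}(0,1)$; hence $W\in L^2_{\comp}(0,1)$. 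Finally, comparing the boundary conditions: with $\go$ normalized as $\go=\cos\vartheta_0\cdot g_1-\sin\vartheta_0\cdot g_2$ for the appropriate $\vartheta_0$ (equal to $\theta_0$ when $0<\theta_0<\pi$, and corresponding to the Dirichlet/Friedrichs data when $\theta_0=0$), the four cases of Section \ref{p:Cases} give exactly the claimed identifications of $D_1D_2$ with $H_{12}(\theta_0,\theta_1)$ (up to the bookkeeping $H_{12}(\theta_0,\theta_1)=H(\theta_0,\theta_1)-W$), which selects the stated pairing of relative/absolute extensions in each of the four sub-cases; this yields \eqref{eq:ML1003181}. The main obstacle I expect is the gluing step: one must verify that the interpolated $\go$ genuinely stays in $\sA$ — i.e., that cutting off does not destroy the delicate $\llog^{-1}L^1$ integrability near $0$ — but since the cutoff region is a compact subinterval of $(0,1)$ away from the singularity, the $\sA$ conditions near $0$ are untouched and this is routine; the only real care needed is checking that $\tilde\go$ can be kept nonvanishing and $AC^2$ across the patching, which is elementary once the signs of $\go_0,\go_1$ are matched.
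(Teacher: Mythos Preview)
Your proposal is correct and follows essentially the same route as the paper's proof: construct local nonvanishing solutions $\go_0,\go_1$ of $H g=0$ near each endpoint satisfying the prescribed boundary conditions, glue them to a nowhere vanishing $\go$ with the required $\sA$ regularity, and then read off \eqref{eq:ML1003181} from the case list in Section~\ref{p:Cases}. The paper's argument is considerably terser (it simply asserts the gluing and the regularity), while you spell out why the $\sA$ conditions survive and why $W\in L^2_{\comp}(0,1)$; up to a harmless sign slip in your formula for $W$, the arguments coincide.
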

\begin{proof} We can certainly find an $\eps>0$ and solutions $\go_j, j=0,1$,
to the differential equation $Hg=0$ on the intervals $(0,\eps)$ resp.
$(1-\eps,1]$ such that $\go_0(x)\not=0$ for $0<x<\eps$, $\go_1(x)\not=0$
for $1-\eps\le x\le 1$ and $B_{0,\theta_0}\go_0=0$, $B_{1,\vartheta_1}\go_1=0$.
If $V$ is real we may choose $\go_j$ to be positive. In any case
we may choose a nowhere vanishing extension $\go$ to the whole interval
with the claimed regularity properties.

By construction $D_1D_2$ has the same boundary conditions as $H(\theta_0,\theta_1)$
and there are neighborhoods of $0,1$ resp. on which the potential of $D_1D_2$ coincides
with that of $H$, whence \eqref{eq:ML1003181}.
\end{proof}

\subsection{Comparison of Wronskians}

For a factorizable operator $H=D_1D_2$ and given admissible boundary conditions we
are now able to compare the Wronskians of normalized fundamental solutions of $D_1D_2$
and $D_2D_1$.
\begin{prop}\label{p:CommutatorWronskian} Let $\Re \nu\ge 0$ and let
$\go(x)=x^{-\nu+1/2}\tilde \go(x)$ with $\tilde \go\in\sA$, $\tilde \go(x)\not=0$ for $0\le x\le 1$.
Put $d_1,d_2$ as in \eqref{eq:d1d2} with $\mu=-\nu$. Assume that $B_{0,\vartheta_0}\go=0, B_{1,\vartheta_1}\go=0$
with admissible boundary conditions $B_{j,\vartheta_j}$, $0<\vartheta_j<\pi$ for $H_{12}=d_1d_2$. Choose
$D_1,D_2$ as in Proposition \plref{p:FactorH} with $\theta_0=\vartheta_0$ and
$\theta_1=0$ or $\theta_1=\vartheta_1$. Let $\varphi_z,\psi_z$ be normalized
solutions for $(D_2D_1+z)g=0$.

\textup{Case I: $0<\vartheta_1=\theta_1<\pi$}. Then $\tilde\varphi_z:=\frac{1}{2-2\nu} d_1\varphi$,
$\tilde\psi_z=-d_1\psi_z$ are normalized solutions for $(D_1D_2+z)g=0$. Furthermore, we have
\begin{equation}\label{eq:RelWronskianI}
   W(\tilde \psi_z,\tilde\varphi_z)=\frac{z}{2-2\nu}W(\psi_z,\varphi_z).
\end{equation}
$D_2D_1$ is invertible and the kernel of $D_1D_2$ is one--dimensional, hence $\spec D_1D_2=\spec D_2D_1\cup \{0\}$.

\textup{Case II: $\theta_1=0$}. Then $\tilde\varphi_z:=(2-2\nu)\ii d_1\varphi$,
$\tilde\psi_z=-z\ii d_1\psi_z$ are normalized solutions for $(D_1D_2+z)g=0$. 
Furthermore, we have
\begin{equation}\label{eq:RelWronskianII}
   W(\tilde \psi_z,\tilde\varphi_z)=\frac{1}{2-2\nu}W(\psi_z,\varphi_z).
\end{equation}
$D_2D_1, D_1D_2$ are both invertible and $\spec D_2D_1=\spec D_1D_2$.
\end{prop}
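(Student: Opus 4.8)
The plan is to exploit the intertwining properties of $d_1$ and $d_2$ together with the factorization $H_{12}=d_1d_2$, $H_{21}=d_2d_1$. The starting observation is that if $(D_2D_1+z)g=0$, then $d_1g$ satisfies $(D_1D_2+z)(d_1g)=d_1(d_2d_1+z)g=0$, so $d_1$ maps solutions of the one homogeneous equation to solutions of the other; the only subtlety is that $d_1g$ might vanish, which happens exactly when $g$ lies in $\ker d_1\subset \ker D_2D_1$, and since $\go$ generates $\ker d_1$ on the interior this will be controlled by the invertibility statements. So first I would settle the spectral/invertibility claims: in Case~I the boundary conditions of $D_1$ and $D_2$ are $ra$ and $ar$, which forces $\go\notin\dom D_2$ (it fails the $1$-endpoint condition because $B_{1,\vartheta_1}\go=0$ with $\vartheta_1\neq 0$ means $\go(1)\neq 0$ after normalization, while $d_{2,ar}$ demands vanishing at $1$) but $\go\in\dom(D_1D_2)$ with $D_1D_2\go=0$ — wait, one checks directly from the case list in Section~\ref{p:Cases} that $D_1D_2=H_{12}(\vartheta_0,0)$ and $\go$ satisfies $B_{0,\vartheta_0}\go=0$ but generally $B_{1,0}\go\neq0$; the correct statement is that $\ker D_1D_2$ is spanned by whichever of $\go$, or rather the solution of $H_{12}g=0$ satisfying both boundary conditions of $D_1D_2$, is nonzero, and one shows it is one-dimensional by a standard solution-counting argument, while $D_2D_1$ is invertible because $0$ cannot be an eigenvalue of $H_{21}(0,\pi-\vartheta_1)$ (again by checking that no nonzero solution of $H_{21}g=0$ meets both boundary conditions, using the asymptotics from Theorem~\ref{5-a-t} and the Friedrichs condition at $0$). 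The relation $\spec D_1D_2=\spec D_2D_1\cup\{0\}$ is then the abstract fact quoted in the introduction about $\spec D_1D_2\cup\{0\}=\spec D_2D_1\cup\{0\}$ combined with these kernel computations.

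Next I would verify that $\tilde\varphi_z:=\frac{1}{2-2\nu}d_1\varphi_z$ and $\tilde\psi_z:=-d_1\psi_z$ are indeed \emph{normalized} solutions for $(D_1D_2+z)g=0$ — this is the heart of the normalization-constant bookkeeping. "Normalized at $0$" for $D_1D_2$ means satisfying the boundary condition there and having leading asymptotics $x^{\mu_0+1/2}$ with the precise constant $1$; since $D_1D_2=H_{12}(\vartheta_0,\cdot)$ has $\mu_0=-\nu$ (as $\vartheta_0>0$) while $D_2D_1=H_{21}(\cdot,\cdot)$ has characteristic exponents shifted by one (it is $l_{\mu+1}+\cdots=l_{-\nu+1}+\cdots$), I need to track how $d_1=\frac{d}{dx}+\frac{\mu+1/2}{x}+\frac{\tilde\go'}{\tilde\go}$ with $\mu=-\nu$ acts on the leading term. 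Applying $d_1$ to $x^{a}\widetilde{(\cdot)}(x)$ with $\widetilde{(\cdot)}\in\sA$, $\widetilde{(\cdot)}(0)=1$, the leading behaviour is $(a+\mu+1/2)x^{a-1}$ plus lower order coming from $\tilde\go'/\tilde\go$ which is $o(1/x)$ by $\tilde\go\in\sA$ and Lemma~\ref{l:ML1003101}; plugging in the exponent that $\varphi_z$ carries as a normalized solution at $0$ for $H_{21}$ produces the factor $2-2\nu$, which is exactly why it appears in the denominator of $\tilde\varphi_z$. At the endpoint $1$, the boundary operators $B_{1,\vartheta_1}$ are designed so that $d_1$ intertwines the boundary condition of $D_2D_1$ at $1$ with that of $D_1D_2$ at $1$ (this is precisely the content of the pairing $d_{1,ra}$ with $d_{2,ar}$ and the case list), and the normalization constant at $1$ for $\tilde\psi_z$ is $-1$, hence the sign. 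I would do the Case~I exponents first and then Case~II, where the extra factor $z^{-1}$ in $\tilde\psi_z$ compensates for the fact that in Case~II the boundary condition at $1$ is Dirichlet ($\theta_1=0$), so $\psi_z$ and $d_1\psi_z$ scale differently — one picks up $(d_2d_1+z)$ acting, giving a factor $z$ that must be divided out to keep the leading coefficient equal to $1$.

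Finally, for the Wronskian identities I would use that the Wronskian is constant along solutions of the (same) homogeneous equation, so it may be evaluated conveniently, and the key algebraic identity is that for any two functions $u,v$ one has $W(d_1 u, d_1 v) = d_1 u\cdot(d_1 v)' - (d_1 u)'\cdot d_1 v$, which — using $d_2 d_1 = -\frac{d^2}{dx^2}+\frac{\go''}{\go}$ and $(d_2d_1+z)u=(d_2d_1+z)v=0$ — can be rewritten, after a short manipulation expressing $(d_1v)'$ via $d_2 d_1 v$, as $W(d_1u,d_1v)=z\,W(u,v)$ (the "$\go''/\go$" terms cancel in the antisymmetric combination). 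Substituting $u=\varphi_z$, $v=\psi_z$ and inserting the scalar prefactors $\frac{1}{2-2\nu}$ and $-1$ (Case~I) gives $W(\tilde\psi_z,\tilde\varphi_z) = (-1)\cdot\frac{1}{2-2\nu}\cdot z\, W(\psi_z,\varphi_z)\cdot(-1)$ — keeping careful track that $W$ is antisymmetric and that I have $W(\tilde\psi_z,\tilde\varphi_z)$ not $W(\tilde\varphi_z,\tilde\psi_z)$ — yielding \eqref{eq:RelWronskianI}; in Case~II the prefactors $(2-2\nu)^{-1}$ and $-z^{-1}$ produce an extra $z^{-1}$ that cancels the $z$ from $W(d_1u,d_1v)=zW(u,v)$, giving \eqref{eq:RelWronskianII}. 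The main obstacle I anticipate is not any single step but the sign-and-constant bookkeeping in the normalization claims: getting the leading-order action of $d_1$ on $\sA$-type functions exactly right (including the harmless contribution of $\tilde\go'/\tilde\go$), matching it against the shifted characteristic exponents $\mu_0=-\nu$ for $H_{12}$ versus $-\nu+1$ for $H_{21}$, and correctly identifying which of the four $d_{j,pq}$ boundary conditions corresponds to which $B_{1,\vartheta_1}$ at the endpoint $1$, so that $d_1$ genuinely preserves normalization rather than merely preserving the solution space.
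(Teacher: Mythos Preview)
Your approach is the paper's: intertwine via $d_1$, read off the factor $2-2\nu$ from the leading asymptotics at $x=0$, check the normalization at $x=1$ case by case, and derive the Wronskian relation by rewriting $(d_1f)'=-d_2d_1f+(\omega'/\omega)d_1f$ so that the $\omega'/\omega$ terms cancel antisymmetrically.

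There is, however, a concrete bookkeeping error that would make your execution fail as written: you have the two cases swapped. In Case~I of the Proposition ($\theta_1=\vartheta_1>0$, $\theta_0=\vartheta_0>0$), Proposition~\ref{p:FactorH} gives $D_1=d_{1,rr}$, $D_2=d_{2,aa}$ (not $ra,ar$), hence $D_1D_2=H_{12}(\vartheta_0,\vartheta_1)$ and $D_2D_1=H_{21}(0,0)$; the kernel of $D_1D_2$ is then spanned by $\omega$ itself, since $\omega$ satisfies both $B_{0,\vartheta_0}$ and $B_{1,\vartheta_1}$. It is Case~II ($\theta_1=0$) where $D_1=d_{1,ra}$, $D_2=d_{2,ar}$, $D_1D_2=H_{12}(\vartheta_0,0)$, $D_2D_1=H_{21}(0,\pi-\vartheta_1)$. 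This matters for the check at $x=1$: in Case~I, $\psi_z$ is normalized for the Dirichlet problem $H_{21}(0,0)$, so $\psi_z(1)=0$, $\psi_z'(1)=-1$, whence $d_1\psi_z(1)=-1$ and $-d_1\psi_z$ is normalized at $1$ for $H_{12}(\vartheta_0,\vartheta_1)$; in Case~II, $\psi_z(1)=1$ and $D_1\psi_z\in\dom(D_2)=\dom(d_{2,ar})$ forces $(d_1\psi_z)(1)=0$, then $(d_1\psi_z)'(1)=-d_2d_1\psi_z(1)=z$, which is where the extra $z^{-1}$ comes from. Two smaller slips: $-\partial_x^2+\omega''/\omega$ is $d_1d_2$, not $d_2d_1$; and the identity is $W(d_1u,d_1v)=-z\,W(u,v)$, so with the prefactors $-1$ and $(2-2\nu)^{-1}$ you get \eqref{eq:RelWronskianI} without the unexplained second $(-1)$.
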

\begin{proof} In view of \eqref{eq:H12}, \eqref{eq:H21}
the characteristic exponents of $d_1d_2$ are $\pm\nu+1/2$ and the characteristic
exponents of $d_2d_1$ are $\pm(-\nu+1)+1/2$.
$d_1\varphi_z$ satisfies $(d_1d_2+z)d_1\varphi_z=d_1(d_2d_1+z)\varphi_z=0$. 
Furthermore, since $\varphi_z$ is normalized at $0$ for $D_2D_1+z$ we have 
in the notation of \eqref{sim}
$\varphi_z(x)\sim x^{3/2-\nu}$ as $x\to 0$ and using \eqref{eq:d1d2} we obtain
\begin{equation}
   d_1\varphi_z(x)\sim (3/2-\nu+1/2-\nu)x^{1/2-\nu}=(2-2\nu) x^{-\nu+1/2},
\end{equation}
hence $\frac{1}{2-2\nu}d_1\varphi_z$ is normalized at $0$ for $D_1D_2+z$ as claimed.
This applies to both cases I and II.

Now consider $d_1\psi_z$ which also solves $(d_1d_2+z)d_1\psi_z=0$.

\textup{Case I: $0<\vartheta_1=\theta_1<\pi$}. Then $D_2D_1=H_{21}(0,0)$ and hence
$\psi_z$ being normalized at $1$ means $\psi_z(1)=0, \psi_z'(1)=-1$. Then
$d_1\psi_z(1)=\psi_z'(1)+(\frac{\go'}{\go}\psi_z)(1)=-1$ and hence $-d_1\psi_z$ is normalized at $1$.

We now find for the Wronskian
\begin{align}
      W(&\tilde\psi_z,\tilde\varphi_z)=\frac{-1}{2-2\nu}W(d_1\psi_z,d_1\varphi_z)
            = \frac{-1}{2-2\nu}\bigl(d_1\psi_z (d_1\varphi_z)'-
            (d_1\psi_z)'d_1\varphi_z\bigr)\nonumber\\
           &=  \frac{1}{2-2\nu}\bigl(d_1\psi_z (d_2d_1\varphi_z)- (d_2d_1\psi_z)d_1\varphi_z\bigr)
           =  \frac{-z}{2-2\nu}\bigl((d_1\psi_z) \varphi_z- \psi_z
           d_1\varphi_z\bigr)\nonumber \\
           &=  \frac{z}{2-2\nu}W(\psi_z,\varphi_z).\label{eq:dWronskian}
\end{align}

\textup{Case II: $\theta_1=0$.} Then $D_2D_1=H_{21}(0,\pi-\vartheta_1)$ 
with $0<\pi-\vartheta_1<\pi$. Thus $\psi_z$ being normalized at $1$ means 
$\psi_z(1)=1$ and $B_{1,\pi-\vartheta_1}\psi_z=0$.
Then $\psi_z\in\dom(D_1=d_{1,ra})$ thus 
$0=d_1\psi_z(1)$. 
Hence $(d_1\psi_z)'(1)=-(d_2d_1\psi_z)(1)=z\psi_z(1)=z$. Thus $-z\ii d_1\psi_z$ is normalized at $1$ for
$H_{12}$. Now the same calculation as in \eqref{eq:dWronskian} yields \eqref{eq:RelWronskianII} and the Proposition
is proved.
\end{proof}




\section{The asymptotic expansion of the resolvent trace} \label{sec:ResolventExpansion} 

\subsection*{Standing assumptions} 
Let $l_\nu:=-\frac{d^2}{dx^2}+\frac{\nu^2-1/4}{x^2}$ be the regular singular model operator.
In this Section we assume $\nu$ to be real and non-negative.

\subsection{The Dirichlet condition at $0$}
Let $B_{1,\theta}=\sin\theta\cdot f'(1)+ \cos\theta\cdot f(1)$ be a boundary operator for the right end point and
let $L_\nu=L_\nu(0,\theta)$ be the closed extension of $l_\nu$ with domain
\begin{equation}\label{5-b}
\dom (L_\nu)=\bigsetdef{f \in \dom (l_{\nu,\max})}{ c_2(f)=0, \ B_{1,\theta}f =0}.
\end{equation}

The following perturbation result will be crucial for establishing the resolvent trace expansion
Theorem \plref{thm:ResolventExpansion} (cf. \cite[Lemma 3.1]{Les:DRS}).
\begin{prop}\label{p:PerturbationEstimate} 
Let $\nu\ge 0$. Then $L_\nu$ is self--adjoint and bounded below.

Let $W$ be a measurable function on $[0,1]$ such that 
\begin{equation}\label{eq:ML100128-1}
   \begin{cases}
         W^2 \in L^1[0,1], &\textup{if } \nu>0,\\
         W^2\, \llog\in L^1[0,1], &\textup{if } \nu=0.
\end{cases}
\end{equation}
Then for $z_0^2>\max \spec (-L_\nu)$ there is a constant $C(z_0)$ such that
for $z\in \R, z\ge z_0,$ we have for the Hilbert--Schmidt norms
\begin{equation}\label{eq:PerturbationEstimate}
\begin{split}
&\|x^{-1/2}W(L_\nu+z^2)^{-1/2}\|_{\HS}^2+\|(L_\nu+z^2)^{-1/2}x^{-1/2}W\|_{\HS}^2\\
   &\le C(z_0)\begin{cases}
     \Bigl( \frac 1z + \int_0^{1/z} |W(x)|^2 dx  + \frac 1z \int_{1/z}^1 \frac 1x |W(x)|^2 dx\Bigr), &\textup{if } \nu>0,\\
     \Bigl( \frac 1z + \int_0^{1/z} |W(x)|^2|\log(xz)| dx  + \frac 1z \int_{1/z}^1 \frac 1x |W(x)|^2 dx\Bigr), &\textup{if } \nu=0,
      \end{cases}\\
   &=: R(z).
\end{split}
\end{equation}
If \eqref{eq:ML100128-1} is replaced by $W^2\in\sV_\nu$, i.e.
\begin{equation}\label{eq:ML100128-2}
   \begin{cases}
         W^2\, \llog \in L^1[0,1], & \textup{if }\nu>0,\\
         W^2\, \llogv{2}\in L^1[0,1], & \textup{if }\nu=0,
\end{cases}
\end{equation}
then 
\begin{align}
&\lim_{z\to\infty} R(z)=0,\label{eq:IntCon-a}\\
&\int_{z_0}^\infty \frac 1z |R(z)| dz<\infty.\label{eq:IntCon-b}
\end{align}
\end{prop}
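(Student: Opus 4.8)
The plan is to treat the two halves of the statement separately. The self-adjointness and semiboundedness of $L_\nu$ is standard: $l_\nu$ is a classical regular singular Sturm-Liouville expression, the boundary condition $c_2(f)=0$ at $0$ is the Friedrichs condition (so the corresponding quadratic form $q(f)=\int_0^1(|f'|^2+\frac{\nu^2-1/4}{x^2}|f|^2)\,dx$ is closable and bounded below by the Hardy inequality), and $B_{1,\theta}f=0$ is a standard separated self-adjoint condition at the regular point $1$. So that piece reduces to a citation of the classical theory plus the Hardy inequality. For the rest I would work with $A_z:=(L_\nu+z^2)^{-1/2}$ and estimate $\|x^{-1/2}W A_z\|_{\HS}^2=\Tr\bigl(A_z\, x^{-1}|W|^2\, A_z\bigr)$, using that the Hilbert-Schmidt norm of $x^{-1/2}WA_z$ and of $A_z x^{-1/2}W$ coincide since one is the adjoint of the other (up to conjugation; in any case their squared HS-norms are both $\Tr(x^{-1}|W|^2 A_z^2)$). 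Thus the whole problem is to control the diagonal of the kernel of $A_z^2=(L_\nu+z^2)^{-1}$, i.e. the function $x\mapsto G_z(x,x)$, where $G_z$ is the resolvent kernel, and to show
\begin{equation*}
 \|x^{-1/2}WA_z\|_{\HS}^2 = \int_0^1 \frac{|W(x)|^2}{x}\, G_z(x,x)\, dx \le C(z_0)\,R(z)
\end{equation*}
with $R(z)$ as in the statement.

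The key input is an explicit bound for the model Green's kernel. For $L_\nu$ the resolvent of the Friedrichs extension is built from Bessel functions: $G_z(x,y)$ is, up to the Wronskian normalization, $\sqrt{xy}\,I_\nu(zx_<)\,K_\nu(zx_>)$ modified by the boundary term at $1$; the point is that on the diagonal $G_z(x,x)\asymp x\,I_\nu(zx)K_\nu(zx)$ away from the boundary effects, and using the standard asymptotics $I_\nu(t)K_\nu(t)\asymp \frac{1}{2t}$ for $t\to\infty$ and $I_\nu(t)K_\nu(t)\asymp \frac{t^{2\nu}}{2\nu}$ (for $\nu>0$) resp. $\asymp|\log t|$ (for $\nu=0$) as $t\to 0$, one gets
\begin{equation*}
 G_z(x,x)\le C\begin{cases} \dfrac{x}{1+zx}, & \nu>0,\\[2mm] \dfrac{x\,(1+|\log(zx)|)}{1+zx}, & \nu=0.\end{cases}
\end{equation*}
Rather than invoking Bessel asymptotics directly, I would follow \cite{Les:DRS} and derive this bound from the fact (used there and attributed to \cite{BruSee:RSA}) that the model resolvent kernel is explicit and well-behaved; concretely one splits the integral $\int_0^1\frac{|W|^2}{x}G_z(x,x)\,dx$ at $x=1/z$, uses $G_z(x,x)/x\le C$ on $(0,1/z)$ (with an extra $|\log(xz)|$ when $\nu=0$) and $G_z(x,x)/x\le C/(zx)$ on $(1/z,1)$, which yields exactly the three terms of $R(z)$ — the $\frac1z$ coming from the contribution of the region near $x=1$ where the boundary term at $1$ enters, controlled by $\|A_z\|\le 1/z$ times a bounded operator. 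This requires a short lemma that the boundary modification at $1$ contributes $O(1/z)$ to the diagonal uniformly; that follows from exponential decay $K_\nu(zx)\lesssim e^{-zx}$.

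The second half — that under the stronger hypothesis $W^2\in\sV_\nu$ one has $R(z)\to 0$ and $\int_{z_0}^\infty \frac1z|R(z)|\,dz<\infty$ — is a direct computation with the three terms. For $\nu>0$: $\frac1z\to0$; $\int_0^{1/z}|W|^2\to0$ by dominated convergence since $|W|^2\llog\in L^1$ implies $|W|^2\in L^1$; and $\frac1z\int_{1/z}^1\frac{|W|^2}{x}\,dx\le \frac1z\bigl(\frac1{\log(1/z)?}\bigr)$— more carefully, bound $\frac1x\le \frac{|\log x|}{x\,|\log x|}$ and use $\int_0^1\frac{|W(x)|^2}{x}|\log x|^{-1}\cdot\frac{|\log x|}{?}$... the clean way is: $\frac1z\int_{1/z}^1\frac{|W|^2}{x}dx$, substitute and use $\frac1x\,\mathbf 1_{[1/z,1]}\le z$, no — instead note $\frac1x\le\frac{1+|\log x|}{x}\cdot\frac1{1+|\log(1/z)|}$ on $[1/z,1]$, giving a bound by $\frac{C}{z(1+\log z)}\int_0^1\frac{(1+|\log x|)|W(x)|^2}{x}dx$; but $|W|^2\llog/x\notin L^1$ in general, so one rather integrates by parts or uses that $x^{-1}\le \int$-monotonicity. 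The honest statement is that these elementary manipulations, done exactly as in \cite[Lemma 3.1]{Les:DRS}, give the two conclusions; for $\nu=0$ the same works with $|W|^2\llogv{2}\in L^1$ absorbing the extra $|\log(xz)|\le|\log x|+\log z$ factor.

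\textbf{Main obstacle.} The genuine work is the first half: obtaining the uniform-in-$z$ pointwise bound on the diagonal resolvent kernel $G_z(x,x)$ with the correct $x$- and $z$-dependence, including the careful treatment of the boundary-condition term at $x=1$ and of the two regimes $zx\lessgtr 1$. The splitting into the three summands of $R(z)$ is entirely dictated by where $G_z(x,x)/x$ behaves like a constant (with possible log at $\nu=0$) versus like $1/(zx)$; once the kernel estimate is in hand everything else is bookkeeping. I expect to lean on \cite{BruSee:RSA} and \cite{Les:DRS} for the explicit form and qualitative behavior of the model resolvent rather than re-deriving Bessel asymptotics from scratch.
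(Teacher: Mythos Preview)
Your approach is essentially the same as the paper's: reduce the squared Hilbert--Schmidt norm to $\int_0^1 x^{-1}|W(x)|^2 k_\nu(x,x;z)\,dx$ with $k_\nu$ the explicit Bessel resolvent kernel, split at $x=1/z$, and use the small/large--argument asymptotics of $I_\nu K_\nu$. Two points worth sharpening.

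First, the paper does invoke the Bessel asymptotics directly (it writes $k_\nu(x,y;z)=\sqrt{xy}\,I_\nu(xz)\bigl(K_\nu(yz)-\beta(z)I_\nu(yz)\bigr)$, computes $\beta(z)=O(e^{-2z})$ from the boundary operator $B_{1,\theta}$, and then uses \eqref{eq:Bessel-large1}, \eqref{eq:Bessel-small}); so your remark about avoiding them via \cite{Les:DRS} is unnecessary. The $\tfrac1z$ summand in $R(z)$ comes precisely from the $\beta(z)$--term: one bounds $|\beta(z)|\int_{1/z}^1|W|^2 I_\nu(xz)^2\,dx$ using $I_\nu(t)^2\le C t^{-1}e^{2t}$ and $\beta(z)=O(e^{-2z})$, which together give $C/z$. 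Your description of this as ``$\|A_z\|\le 1/z$ times a bounded operator'' is not the right mechanism.

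Second, your treatment of \eqref{eq:IntCon-a}--\eqref{eq:IntCon-b} is where you get tangled. The paper sidesteps all the ad hoc manipulations you attempt by the substitution $z\mapsto 1/x$, writing $\int_{z_0}^\infty \tfrac1z R(z)\,dz=\int_0^{1/z_0}\tfrac1x R(\tfrac1x)\,dx$, and then observing that the individual terms of $\tfrac1x R(\tfrac1x)$ are exactly of the form treated in Lemma~\ref{les-2} (with $\rho=|W|^2$ and $\alpha=0$). That lemma gives both $L^1$--membership and the vanishing at $0$ (equivalently $R(z)\to 0$) in one stroke, under precisely the hypothesis $W^2\in\sV_\nu$. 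This is cleaner than trying to bound $\tfrac1z\int_{1/z}^1\tfrac{|W|^2}{x}\,dx$ by hand.
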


\begin{proof}
The boundary conditions for $L_\nu$ are separated and admissible. Therefore, $L_\nu$ is self--adjoint.
We will see below that the resolvent is Hilbert-Schmidt. Thus $L_\nu$ has a pure point spectrum.
An eigenfunction satisfying $L_\nu f=\gl^2 f,$ $\gl\in \R\cup i\R$, is therefore a multiple
of $\sqrt{x} J_\nu(\gl x)$, where $J_\nu$ denotes the Bessel function of order $\nu$
\cite{Wat:TTB}. From the known asymptotic behavior of the Bessel functions with
imaginary argument one deduces that $L_\nu$ has at most finitely many negative eigenvalues and
hence is bounded below.

The kernel $k_\nu(x,y;z)$ of the resolvent $(L_\nu+z^2)\ii$ is given in terms of the modified Bessel
functions $I_\nu, K_\nu$ 
\begin{equation}
  k_\nu(x,y;z) =\sqrt{xy} I_\nu(xz)\Bigl( K_\nu(yz)-\gb(z) I_\nu(yz)\Bigr),\quad x\le y,
\end{equation}
where $\beta(z)$ is determined by the requirement $B_{1,\theta}k(x,\cdot;z)=0$
(cf. \cite{BruSee:RSA}). One finds
\begin{equation}
\begin{split}
        \gb(z)&=\frac{(\cos\theta+\frac 12 \sin\theta) K_{\nu}(z)+z K_\nu'(z) \sin\theta}{%
                     (\cos\theta+\frac 12 \sin\theta) I_{\nu}(z)+z I_\nu'(z) \sin\theta}\\
              &=\frac{(\cos\theta+(\frac 12+\nu) \sin\theta) K_{\nu}(z)-z K_{\nu+1}(z)\sin\theta }{%
                      (\cos\theta+(\frac 12+\nu) \sin\theta) I_{\nu}(z)+z I_{\nu+1}(z)\sin\theta },
\end{split}
\end{equation}
where in the last equation we used the recursion relations \cite[3.71]{Wat:TTB}
\begin{equation}
    z I_\nu'(z)=z I_{\nu+1}(z) +\nu I_\nu(z),\quad z K_\nu'(z)=-z K_{\nu+1}+\nu K_\nu(z).
\end{equation}

Recall the following asymptotic relations for the modified Bessel functions \cite[7.23]{Wat:TTB}
\begin{equation}
\begin{split}
I_{\nu}(z)&=\frac{1}{\sqrt{2\pi z}}e^z\bigl(1+O(z^{-2})\bigr), \\ 
K_{\nu}(z)&=\sqrt{\frac{\pi}{2z}}e^{-z}\bigl(1+O(z^{-2})\bigr), 
\end{split}
\quad z\to \infty,  \label{eq:Bessel-large1}
\end{equation}
and  \cite[Sec. 3.7]{Wat:TTB}
\begin{equation}\label{eq:Bessel-small}
I_{\nu}(z)\sim \frac{1}{2^\nu \Gamma(\nu+1)} \,z^{\nu}, \quad
 K_{\nu}(z)\sim \begin{cases}
                2^{\nu-1}\Gamma(\nu) \,z^{-\nu}, &\textup{if } \nu \neq 0, \\
                 -\log z, &\textup{if } \nu=0,
                \end{cases}
\quad \textup{as} \ z\to 0;
\end{equation} 
for the notation $\sim$ see \eqref{sim}.
From the asymptotics as $z\to\infty$ one infers
\begin{equation}\label{eq:ML100128-3}
   \beta(z)=O (e^{-2z}), \quad z\to \infty.
\end{equation}

To prove the estimate \eqref{eq:PerturbationEstimate} we fix $z_0>\max\spec(-L_\nu)$ and find
for $z\ge z_0$
\begin{equation}\label{eq:ML1003051}
\begin{split}
 \|x^{-1/2}&W(L_\nu+z^2)^{-1/2}\|_{\HS}^2=\|(L_\nu+z^2)^{-1/2}x^{-1/2}W\|_{\HS}^2\\
      &=\Tr\bigl(x^{-1/2}W(L_\nu+z^2)\ii\ovl{W}x^{-1/2}\bigr)\\
      &= \int_0^1 x\ii |W(x)|^2 k_\nu(x,x;z) dx\\
      &= \int_0^1 |W(x)|^2 I_\nu(xz) K_\nu(xz) dx - \beta(z) \int_0^1 |W(x)|^2 |I_\nu(xz)|^2 dx.
\end{split}
\end{equation}
In the following $C$ denotes a generic constant depending only on $z_0$ and $\nu$. 
We split the integrals into an integration from $0$ to $1/z$ and from $1/z$ to $1$.
In the first regime \eqref{eq:Bessel-small} yields
\begin{equation}
           |I_\nu(xz)K_\nu(xz)|\le \begin{cases} C,&\textup{if } \nu \not=0,\\
                                           C |\log(xz)|,&\textup{if } \nu=0,
                                \end{cases}
\end{equation}
and $|I_\nu(xz)|\le C$. Thus, 
\begin{equation}\label{eq:ML100128-4}
\begin{split}
   \int_0^{1/z}& x\ii |W(x)|^2 k_\nu(x,x;z) dx\\
               &\le \begin{cases}  C \int_0^{1/z} |W(x)|^2 dx,&\textup{if } \nu\not =0,\\
                     C  \int_0^{1/z} |W(x)|^2 |\log(xz)| dx,&\textup{if } \nu=0.
\end{cases}
\end{split}
\end{equation}

For $1/z\le x\le 1$ we apply \eqref{eq:Bessel-large1}
and find $|I_\nu(xz)K_\nu(xz)|\le \frac{C}{xz}$, $|I_\nu(xz)|^2\le \frac {C}{xz}e^{2 xz}$. Thus
\begin{equation}\label{eq:ML100128-5}
   \int_{1/z}^1 |W(x)|^2 |I_\nu(xz) K_\nu(xz)|dx \le C \frac 1z \int_{1/z}^1 \frac 1x |W(x)|^2 dx,
\end{equation}
and in view of \eqref{eq:ML100128-3} 
\begin{equation}\label{eq:ML100128-6}
\begin{split}
    |\beta(z)| &\int_{1/z}^1 |W(x)|^2 |I_\nu(xz)|^2 dx\\
        &\le C e^{-2z} \int_{1/z}^1 |W(x)|^2 \frac{1}{xz} e^{2 xz} dx\\
        &\le C\Bigl( e^{-z} \frac 1z \int_{1/z}^{1/2} \frac 1x |W(x)|^2 dx + \frac 1z \int_{1/2}^1 |W(x)|^2 dx\Bigr)\\
       & \le C \frac 1z, \quad z\ge z_0.
\end{split}
\end{equation}
The estimate \eqref{eq:PerturbationEstimate} now follows from \eqref{eq:ML1003051}, \eqref{eq:ML100128-4},
\eqref{eq:ML100128-5}, \eqref{eq:ML100128-6}.

Under the assumptions \eqref{eq:ML100128-2} we apply
Lemma \ref{les-2} to $\frac 1x R(\frac 1x)$ since $\int_{z_0}^\infty \frac 1z |R(z)| dz =\int_0^{1/z_0}
\frac 1x |R(\frac 1x)| dx$ and conclude \eqref{eq:IntCon-a}, \eqref{eq:IntCon-b}.
\end{proof}

We return to the discussion of the operator $H=l_\nu+X\ii V$; recall that $X$
denotes the function $X(x)=x$. We have seen in the previous 
Proposition that $L_\nu$ is a bounded below self--adjoint operator. In fact $L_\nu$ is the Friedrichs extension
of $l_\nu$ restricted to the domain
\begin{equation}\label{eq:ML100128-7}
\dom(l_\nu)=\bigsetdef{f\in\cinfz{(0,1]}}{ B_{1,\theta}f=0}.
\end{equation}
We now want to construct the Friedrichs extension of $H$ on $\dom(l_\nu)$ and compare its
resolvent to that of $L_\nu$; cf. \cite[VI, 2.3]{Kat:PTL}. The problem is that the domains of $L_\nu$ and of the Friedrichs extension
of $H$ on $\dom(l_\nu)$ are not necessarily equal. This is because the domain of $L_\nu$ contains
functions $f(x)$ with $f(x)\sim x^{\nu+1/2}$ as $x\to 0$. For such a function, $X^{-1} V f$ is not necessarily
in $L^2[0,1]$.

\begin{prop}\label{p:mSec}
Let $H=l_\nu+X\ii V$ with $V\in \sV_\nu$ (cf. Def. \plref{def:FunctionSpaces}).
Moreover, let $\dom(l_\nu)$ be given by \eqref{eq:ML100128-7},  $0\le \theta<\pi$, and 
let $q(f,g):=\scalar{l_\nu f}{g}$ be  the form of the operator $l_\nu$. Then the form
\begin{align}\label{38-b}
v(f,g):=\langle X^{-1}Vf, g \rangle_{L^2[0,1]}, \ f,g\in \dom (l_\nu)
\end{align}
is $q$--bounded with arbitrarily small $q$--bound $b$.
\end{prop}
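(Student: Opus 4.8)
The plan is to reduce the relative-boundedness statement to a pointwise estimate on eigenfunctions of $L_\nu$ together with the already-established asymptotics of functions in $\dom(l_\nu)$, rather than re-deriving everything from scratch. First I would observe that the form $q(f,f)=\scalar{l_\nu f}{f}$ is, up to the finitely many negative eigenvalues, equivalent to the graph norm of $L_\nu^{1/2}$; more precisely, for $z_0^2>\max\spec(-L_\nu)$ the form $q(f,f)+z_0^2\|f\|^2$ equals $\|(L_\nu+z_0^2)^{1/2}f\|^2$ on $\dom(l_\nu)$, which is a core for the form. Hence it suffices to show that for every $\eps>0$ there is $C_\eps$ with
\begin{equation*}
|v(f,f)|=\bigl|\scalar{X^{-1}Vf}{f}\bigr|\le \eps\,\|(L_\nu+z_0^2)^{1/2}f\|^2+C_\eps\|f\|^2,\quad f\in\dom(l_\nu).
\end{equation*}

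The key step is to realize this as a statement about the Hilbert--Schmidt bound already proved in Proposition \ref{p:PerturbationEstimate}. Writing $u:=(L_\nu+z^2)^{1/2}f$ for $z\ge z_0$, so that $f=(L_\nu+z^2)^{-1/2}u$ and $\|u\|^2=q(f,f)+z^2\|f\|^2$, I would split $V=V_1+V_2$ where $V_2:=V\cdot\mathbf 1_{[1/n,1-1/n]}$ is the truncation appearing in the Fréchet seminorms of $\sV_\nu$ and $V_1:=V-V_2$ is supported near the endpoints. For the "bulk" part $V_2$, on $[1/n,1-1/n]$ the weight $X^{-1}$ is bounded, $V_2\in L^2$, and $(L_\nu+z^2)^{-1/2}$ maps $L^2$ into $\dom(L_\nu^{1/2})\subset H^1_{\loc}$ with norm $O(z^{-1/2})\to 0$; the Rellich--type compactness then gives that $V_2\cdot X^{-1}$ is infinitesimally form-bounded with respect to $q$ by a standard argument. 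For the singular part $V_1$ one writes, with $W:=|V_1|^{1/2}$ (which satisfies \eqref{eq:ML100128-1} since $V\in\sV_\nu$ implies $|V|\,\llog\in L^1$, hence $W^2\,\llog\in L^1$ — and in fact $W^2\in\sV_\nu$ by the $\sV_\nu$ hypothesis), the factorization
\begin{equation*}
\bigl|\scalar{X^{-1}V_1 f}{f}\bigr|\le \bigl\|X^{-1/2}W f\bigr\|^2=\bigl\|X^{-1/2}W(L_\nu+z^2)^{-1/2}u\bigr\|^2\le \|X^{-1/2}W(L_\nu+z^2)^{-1/2}\|_{\HS}^2\,\|u\|^2.
\end{equation*}
By Proposition \ref{p:PerturbationEstimate} the operator-norm-squared (bounded by the Hilbert--Schmidt norm squared) is $\le C R(z)$ with $R(z)\to 0$ as $z\to\infty$, by \eqref{eq:IntCon-a}. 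Choosing $z=z(\eps)$ large enough that $C\,R(z)\le\eps/2$ and absorbing the $z^2\|f\|^2$ term into the constant $C_\eps$ yields the desired infinitesimal bound for the singular piece; combining with the bulk estimate completes the argument.

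The main obstacle I anticipate is bookkeeping between the two genuinely different mechanisms: near the endpoints smallness of the $q$-bound comes from the $z\to\infty$ decay in Proposition \ref{p:PerturbationEstimate} (a quantitative singular estimate), whereas in the interior it comes from ordinary elliptic regularity plus compactness. One has to be a little careful that the truncation level $n$ and the spectral parameter $z$ are chosen in the right order — fix $n$ first so that the support of $V_1$ is squeezed near $\{0,1\}$, then send $z\to\infty$ — and that the constant $z_0^2>\max\spec(-L_\nu)$ is fixed once and for all so that $q(\cdot,\cdot)+z_0^2\|\cdot\|^2$ is a genuine inner product. A minor technical point is checking that $\dom(l_\nu)$ as in \eqref{eq:ML100128-7} is a form core for $L_\nu$, which is immediate since $L_\nu$ is by construction the Friedrichs extension of $l_\nu\restriction\dom(l_\nu)$. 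With these in place the estimate is a routine assembly of the stated results, and the proposition follows by standard form-perturbation theory (e.g. the KLMN setup, cf. \cite[VI, 2.3]{Kat:PTL}).
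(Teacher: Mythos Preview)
Your proposal is correct and the core mechanism --- bounding $|v(f,f)|=\|X^{-1/2}|V|^{1/2}f\|^2$ by $\|X^{-1/2}|V|^{1/2}(L_\nu+z^2)^{-1/2}\|_{\HS}^2\,\|(L_\nu+z^2)^{1/2}f\|^2$ and invoking Proposition~\ref{p:PerturbationEstimate} with $W=|V|^{1/2}$ --- is exactly the paper's argument. The paper, however, does this in one stroke for the full potential $V$: since $V\in\sV_\nu$ already means $|V|\,\llog\in L^1[0,1]$ (resp.\ $|V|\,\llog^2\in L^1$ for $\nu=0$), the hypothesis \eqref{eq:ML100128-2} holds globally for $W=|V|^{1/2}$, so your bulk/endpoint splitting and the separate Rellich-compactness argument for $V_2$ are unnecessary detours.
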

\begin{proof}
We compute for any $g\in \dom (l_\nu)$ and $z\ge z_0$
\[
\begin{split}
|v(g,g)|=& \|x^{-1/2}|V|^{1/2} g\|_{L^2}^2 \\
\le& \|x^{-1/2}|V|^{1/2} (L_\nu+z^2)^{-1/2}\|^2 \scalar{(L_\nu+z^2) g}{g}.
\end{split}
\]
Now Proposition \ref{p:PerturbationEstimate} implies, that for any $b<1$ there exists 
$z\in \R_+$ sufficiently large, such that
\[
|v(g,g)|\leq b \scalar{(L_\nu+z^2)g}{g} = bz^2 \|g\|^2_{L^2}+b q(g,g).\qedhere
\]
\end{proof}

The quadratic form $q$ is bounded below and closable with closure $Q$. 
By the second representation theorem \cite[IV, 2.6 Theorem 2.23]{Kat:PTL}, we have
\begin{align}\label{42-b}
\dom (Q)=\dom ((L_\nu+z_0^2)^{1/2}).
\end{align}

As a consequence of Proposition \ref{p:mSec} 
we find in view of \cite[VI, 1.6, Theorem 1.33]{Kat:PTL} that $(q+v)$ is a sectorial form with
\begin{align}\label{43-b}
\dom (\ovl{q+v})=\dom (Q)=\dom ((L_\nu+z_0^2)^{1/2}).
\end{align}

By the first representation theorem (\cite[VI, 2.1, Theorem 2.1]{Kat:PTL}) it determines uniquely a closed m-sectorial extension 
$H(0,\theta)$ of $H=l_\nu+X^{-1}V$, with domain given by
\begin{equation}
\label{44-b}
\begin{split}
\dom \bigl(H(0,\theta)\bigr)
     &=\bigsetdef{f \in \dom ((L_\nu+z_0^2)^{1/2})}{ (l_\nu+X^{-1}V+z_0^2)f \in L^2[0,1]},\\
     &=\bigsetdef{f\in \dom (\Hmax)}{ c_2(f)=0, B_{1,\theta}f =0}.
\end{split}
\end{equation}
Note that the functional $c_2$ (as well as $c_1$) depends on the potential and the $c_2$ in \eqref{44-b} 
is the one associated to $H$.

\begin{theorem}\label{5-b-t}
The operator $H(0,\theta)$ is m-sectorial, in particular $\spec H(0,\theta)$ is a subset of a sector
$\{\xi\in \C\mid |\arg (\xi-\eta)|\leq\alpha\},$ for some fixed angle $\alpha \in (0, \pi /2)$ and $\eta \in \R$. 
Its resolvent is trace class and 
\begin{align}\label{49-b}
R_1(z):= \|(H(0,\theta)+z)^{-1}-(L_\nu+z)^{-1}\|_{\tr}, \quad z\in \R_+, z>\max(-\eta,0)
\end{align}
satisfies 
\begin{align}
    &\lim_{z\to\infty} z R_1(z)=0,  \quad z\in \R_+,        \label{eq:RemainderEstimateA}\\
    &\int_{z_0}^\infty |R_1(z)|dx <\infty.                    \label{eq:RemainderEstimateB}
\end{align}

Furthermore,
\begin{equation}\label{eq:ML1003054}
\begin{split}
  \Tr\bigl(H(0,\theta)+z\bigr)^{-1}&=\Tr \bigl(L_\nu+z\bigr)^{-1}+R_2(z)\\
                                   &=\frac{a}{\sqrt{z}}+\frac{b}{z}+R_3(z),
\end{split}
\end{equation}
where 
\begin{equation}\label{eq:HeatCoefficientsA}
 a=\frac 12,\quad b=-\frac 12\bigl(\nu+\mu_1(B_{1,\theta})\bigr)
        =\begin{cases} -\frac 12(\nu+\frac 12),&\textup{if } \theta_1=0,\\
         -\frac 12(\nu-\frac 12),&\textup{if } 0<\theta_1<\pi.\\
\end{cases} \textup{ (cf. \eqref{eq:DefMu1})}
\end{equation}
The remainders $R_2(z),R_3(z)$ satisfy \eqref{eq:RemainderEstimateA} and \eqref{eq:RemainderEstimateB} and therefore
the zeta-determinant of $H(0,\theta)$ is well--defined by the formula (see \eqref{eq:ZetaPrimeZero} and Figure \plref{fig:Contour}).
\begin{align}
\log \det\nolimits_{\zeta} H(0,\theta)= -\regint_\Gamma \Tr\bigl((H(0,\theta)+z)\ii\bigr) dz.
\end{align}
\end{theorem}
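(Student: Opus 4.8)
The plan is to reduce the resolvent statements to the single Hilbert--Schmidt bound of Proposition \plref{p:PerturbationEstimate}, comparing $H(0,\theta)$ with the model operator $L_\nu$ at the level of \emph{forms} rather than domains. That $H(0,\theta)$ is $m$-sectorial with spectrum in a sector $\{\xi\mid|\arg(\xi-\eta)|\le\alpha\}$, $\alpha\in(0,\pi/2)$, is already contained in the discussion preceding the theorem (Proposition \plref{p:mSec} together with Kato's first representation theorem and \eqref{43-b}, \eqref{44-b}), so only the assertions about the resolvent remain. First I would write $X\ii V=M\,\Sigma\,M$ with $M:=x^{-1/2}|V|^{1/2}$ and $\Sigma:=\operatorname{sgn}V$ (so $\|\Sigma\|_{L^\infty}\le1$), and note that $V\in\sV_\nu$ is equivalent to $|V|\in\sV_\nu$. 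Then Proposition \plref{p:PerturbationEstimate}, applied with $W=|V|^{1/2}$ and with the spectral parameter $\sqrt z$ in place of $z$, shows that $M(L_\nu+z)^{-1/2}$ is Hilbert--Schmidt with
\[
   \bigl\|M(L_\nu+z)^{-1/2}\bigr\|_{\HS}^2\le R(\sqrt z),\qquad z\ge z_0,
\]
where $R(\cdot)$ is the function of \eqref{eq:PerturbationEstimate}; since $W^2=|V|\in\sV_\nu$, i.e.\ \eqref{eq:ML100128-2} holds, this $R$ satisfies \eqref{eq:IntCon-a} and \eqref{eq:IntCon-b}.

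Because $X\ii V$ is $L_\nu$-form bounded with arbitrarily small bound (Proposition \plref{p:mSec}), the operator $H(0,\theta)$ arising from the sectorial form $\overline{q+v}$ satisfies, via the first representation theorem,
\[
   H(0,\theta)+z=(L_\nu+z)^{1/2}\bigl(I+S(z)\bigr)(L_\nu+z)^{1/2},\qquad S(z):=(L_\nu+z)^{-1/2}\,M\,\Sigma\,M\,(L_\nu+z)^{-1/2},
\]
for $z\ge z_0$. Here $S(z)=\bigl[(L_\nu+z)^{-1/2}M\bigr]\,\Sigma\,\bigl[M(L_\nu+z)^{-1/2}\bigr]$ is a product of two Hilbert--Schmidt operators, hence trace class with $\|S(z)\|_{\tr}\le R(\sqrt z)$, and $\|S(z)\|\le R(\sqrt z)<\tfrac12$ once $z_0$ is large enough. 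Inverting and subtracting,
\[
   (H(0,\theta)+z)\ii-(L_\nu+z)\ii=-(L_\nu+z)^{-1/2}\,S(z)\,\bigl(I+S(z)\bigr)\ii\,(L_\nu+z)^{-1/2},
\]
which is trace class with $R_1(z)\le\|(L_\nu+z)^{-1/2}\|^2\,\|S(z)\|_{\tr}\,\|(I+S(z))\ii\|\le \frac{C}{z}\,R(\sqrt z)$. As $(L_\nu+z)\ii$ itself is trace class — its continuous kernel $k_\nu(x,x;\sqrt z)$ is integrable on $[0,1]$ by \eqref{eq:Bessel-small}, \eqref{eq:Bessel-large1} — the resolvent of $H(0,\theta)$ is trace class. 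Finally $z\,R_1(z)\le C\,R(\sqrt z)\to0$ by \eqref{eq:IntCon-a} gives \eqref{eq:RemainderEstimateA}, and the substitution $z=s^2$ turns \eqref{eq:IntCon-b} into $\int_{z_0}^\infty R_1(z)\,dz\le 2C\int_{\sqrt{z_0}}^\infty \frac{R(s)}{s}\,ds<\infty$, which is \eqref{eq:RemainderEstimateB}.

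For the trace expansion I would set $R_2(z):=\Tr\bigl[(H(0,\theta)+z)\ii-(L_\nu+z)\ii\bigr]$, so that $|R_2(z)|\le R_1(z)$ obeys \eqref{eq:RemainderEstimateA}, \eqref{eq:RemainderEstimateB}, and then insert the expansion $\Tr(L_\nu+z)\ii=\frac{a}{\sqrt z}+\frac bz+O(z^{-3/2})$ with $a,b$ as in \eqref{eq:HeatCoefficientsA}. This expansion is the one of Br\"uning--Seeley \cite{BruSee:RSA}; to pin down the constants one may instead evaluate $\int_0^1 k_\nu(x,x;\sqrt z)\,dx$ directly from the Bessel formulas, splitting the integral at $x=1/\sqrt z$: the interior yields $\frac12 z^{-1/2}$, the singular end $x=0$ contributes $-\frac\nu2 z^{-1}$, the regular end $x=1$ contributes $-\frac{\mu_1}{2}z^{-1}$ according to the boundary condition $B_{1,\theta}$, and the rest is $O(z^{-3/2})$; the value of $b$ then splits into the two cases of \eqref{eq:HeatCoefficientsA} upon inserting $\mu_1=\pm\frac12$. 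Putting $R_3(z):=R_2(z)+\bigl(\Tr(L_\nu+z)\ii-\frac{a}{\sqrt z}-\frac bz\bigr)$, the extra term is $O(z^{-3/2})$ and hence all remainders meet \eqref{eq:RemainderEstimateA}, \eqref{eq:RemainderEstimateB}. Since $H(0,\theta)$ has trace-class resolvent, discrete spectrum in a sector with $\alpha<\pi/2$ — so $\spec(-H(0,\theta))\cap\R_+$ is finite — and $\Tr(H(0,\theta)+z)\ii$ has the form \eqref{eq:DetExp} with $R_3$ satisfying \eqref{eq:DetConA}, \eqref{eq:DetConB}, all hypotheses of Subsection \plref{ss:ZetaDeterminant} are met, and $\log\detz H(0,\theta)=-\zeta_{H(0,\theta)}'(0)=-\regint_\Gamma\Tr\bigl((H(0,\theta)+z)\ii\bigr)\,dz$.

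The hard part is the second paragraph: making the factorization $H(0,\theta)+z=(L_\nu+z)^{1/2}(I+S(z))(L_\nu+z)^{1/2}$ legitimate, which is precisely where one must use that $H(0,\theta)$ and $L_\nu$ share the form domain $\dom\bigl((L_\nu+z_0^2)^{1/2}\bigr)$ (equation \eqref{43-b}) rather than the operator domain, so that $X\ii V=M\Sigma M$ is genuinely $L_\nu$-form bounded (Proposition \plref{p:mSec}); and, just as importantly, tracking the \emph{exact} power $z^{-1}$ in $R_1(z)\le\frac Cz R(\sqrt z)$, since it is this power that converts the integrability $\int z\ii|R(z)|\,dz<\infty$ of \eqref{eq:IntCon-b} into $\int R_1(z)\,dz<\infty$. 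By contrast, identifying $a=\frac12$ and $b=-\frac12(\nu+\mu_1)$ is just the Br\"uning--Seeley expansion applied to the explicitly known model resolvent and involves no essential difficulty.
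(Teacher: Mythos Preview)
Your proposal is correct and follows essentially the same route as the paper: both use the equality of form domains \eqref{43-b} to write $(H(0,\theta)+z)\ii-(L_\nu+z)\ii$ as $(L_\nu+z)^{-1/2}$ sandwiching a trace-class operator built from $S(z)=(L_\nu+z)^{-1/2}X\ii V(L_\nu+z)^{-1/2}$, estimate via the Hilbert--Schmidt bound of Proposition~\plref{p:PerturbationEstimate}, and then invoke Br\"uning--Seeley for the expansion of $\Tr(L_\nu+z)\ii$. The only cosmetic difference is that the paper expands $(I+S(z))\ii$ as a Neumann series and bounds term by term, whereas you keep it in closed form; your decomposition $X\ii V=M\Sigma M$ with $\Sigma=\operatorname{sgn}V$ should be read as $\Sigma=V/|V|$ since $V$ may be complex-valued, and the paper's remainder for the model trace is actually $O(z^{-3/2}\log z)$ rather than $O(z^{-3/2})$, but neither point affects the argument.
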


\begin{proof}
The operator $H(0,\theta)$ is m-sectorial, as it arises from the sectorial 
form $(q+v)$, see \cite[VI.2, Theorem 2.1]{Kat:PTL}. 
Since by Proposition \ref{p:PerturbationEstimate} we have 
\[
\lim_{z\to\infty} \| x^{-1/2} |V|^{1/2} (L_\nu+z)^{-1/2}\|_{\HS}=0,
\]
we may invoke the Neumann series to obtain
\begin{align}\nonumber
&(H(0,\theta)+z)^{-1}-(L_\nu+z^2)^{-1}\\ 
&=\sum\limits_{n\geq 1}(-1)^n (L_\nu+z)^{-\frac{1}{2}}
            \bigl[ (L_\nu+z)^{-\frac{1}{2}}x^{-1}V(L_\nu+z)^{-\frac{1}{2}}\Bigr]^{n} (L_\nu+z)^{-\frac{1}{2}}.\label{52-b}
\end{align}
There is a little subtlety here since $\dom\bigl(H(0,\theta)\bigr)$ does not necessarily equal $\dom(L_\nu)$. However, by 
Proposition \ref{p:mSec} the forms of $H(0,\theta)$ and $L_\nu$ have the same domain. This is used decisively
by writing $(L_\nu+z^2)^{-1/2}$ at the beginning and at the end of \eqref{52-b}.

We estimate the trace norm of the individual summands by
\[
\begin{split}
 \|&(L_\nu+z)^{-1/2}\|^2\cdot \|(L_\nu+z)^{-1/2}x^{-1}V(L_\nu+z)^{-1/2}\|^n_{\tr} \\
   &\leq \|(L_\nu+z)^{-1}\|  \|x^{-1/2}|V|^{1/2}(L_\nu+z)^{-1/2}\|^n_{\HS}\cdot \|(L_\nu+z)^{-1/2}x^{-1/2}|V|^{1/2}\|^n_{\HS}\\
    &\le C z\ii \tilde R(z)^{n}, \label{53a-b}
\end{split}
\]
where $\tilde R(z)= \|x^{-1/2}|V|^{1/2}(L_\nu+z)^{-1/2}\|_{\HS}\cdot \|(L_\nu+z)^{-1/2}x^{-1/2}|V|^{1/2}\|_{\HS}$. 
The claim about $R_1(z)$ now follows from Proposition \plref{p:PerturbationEstimate}.

The first line of \eqref{eq:ML1003054} follows since $| R_2(z) |\le R_1(z)$. As for 
the second line of \eqref{eq:ML1003054} we note that $\Tr(L_\nu+z)\ii$ has a complete asymptotic expansion as $z\to\infty$ \cite{BruSee:RSA}, in particular 
$$\Tr \bigl(L_\nu+z\bigr)^{-1}=\frac{a}{\sqrt{z}}+\frac{b}{z}+O(z^{-3/2}\log z),$$
with $a,b$ as in \eqref{eq:HeatCoefficientsA}. 

For the claim about the zeta-determinant see Section \plref{ss:ZetaDeterminant}. 
\end{proof}

\subsection{General boundary conditions}\label{sec:GenBC}

We now extend Theorem \plref{5-b-t} to general boundary conditions at $0$. Recall
that $0$ is in the limit point case if and only if $\nu\ge 1$. So the following discussion is of relevance
only in the case $\nu<1$. The case $\nu=0$ bears more difficulties (see \cite{FMPS:UPZ}, \cite{KLP:UPR})
and therefore we assume from now on $0<\nu (<1)$.
The difficulty then is that for $0<\theta_0<\pi$ the resolvent of $l_\nu(\theta_0,\theta_1)$ does not absorb 
negative $x$ powers as the operator $l_\nu(0,\theta_1)$ does. 
Therefore, we do not have \eqref{eq:PerturbationEstimate}
at our disposal and hence the resolvent of $H(\theta_0,\theta_1)$ cannot be constructed
as a perturbation of the resolvent of $l_\nu(\theta_0,\theta_1)$. Instead we will employ the 
results about factorizable operators in Section \plref{ss:FactorOperator}. 
However we have to impose a slight restriction on the class of potentials:

\begin{definition}\label{def:DetClass} 
Let $V\in\sV_\nu$ and let $H=l_\nu+X\ii V$ be the corresponding 
regular singular Sturm-Liouville operator. $V$ is called of \emph{determinant class} if
for any pair of admissible boundary conditions $B_{j,\theta_j}$ the 
operator $H(\theta_0,\theta_1)$ satisfies for $z\ge z_0, z\in \R_+$, 
\begin{align}
  \bigl\| \bigl( H(\theta_0,\theta_1)+z\bigr)\ii \bigr\| &= O(|z|^{-1}),
      \label{eq:DetClass1}\\
\bigl \| \bigl( H(\theta_0,\theta_1)+z\bigr)\ii \bigr\|_{\tr} &= O(|z|^{-1/2}),
      \label{eq:DetClass2} \\
\intertext{and for any $\varphi\in \cinfz{(0,1]}$}
\bigl \|\varphi  \bigl( H(\theta_0,\theta_1)+z\bigr)\ii \bigr\|_{L^2\to H^1} &= O(|z|^{-1/2}).
       \label{eq:DetClass3}
\end{align}
Here, $\|\cdot\|_{L^2\to H^1}$ denotes the norm of a map from $L^2[0,1]$ into the first
Sobolev space $H^1[0,1]$.
We denote the set of determinant class potentials by $\sVdet$.
\end{definition}
We note some consequences and give some criteria for $V$ being of determinant class.

\begin{lemma}\label{l:InterpolationInequality}
Let $V\in\sVdet$ and let $W\in L^2_\comp(0,1]$ with $\supp W\subset [\delta,1], \delta>0$.
Then
\begin{equation}\label{eq:InterpolationInequalityA}
    \bigl\| W (H(\theta_0,\theta_1)+z)\ii\bigr\|\le C_{\delta} \|W\|_{L^2}\; |z|^{-2/3}, \quad z\ge z_0.
\end{equation}
For $W\in L^\infty[0,1]$ we have
\begin{equation}\label{eq:InterpolationInequalityB}
    \bigl\| W (H(\theta_0,\theta_1)+z)\ii\bigr\|\le C \|W\|_{\infty} \; |z|^{-1}, \quad z\ge z_0.
\end{equation}
\end{lemma}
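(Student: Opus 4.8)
The plan is to prove both inequalities by interpolating between the two trivial-looking bounds that we already have from the determinant class hypothesis, namely \eqref{eq:DetClass1} (an $L^2\to L^2$ estimate of order $z^{-1}$) and \eqref{eq:DetClass3} (an $L^2\to H^1$ estimate of order $z^{-1/2}$, valid after multiplying by a cutoff supported away from $0$). The key point is that multiplication by a function $W$ which is supported in $[\delta,1]$ and only $L^2$ can be controlled on $H^1$ by a Sobolev/Gagliardo--Nirenberg type bound: for $u\in H^1[0,1]$ one has $\|u\|_\infty\le C\|u\|_{L^2}^{1/2}\|u\|_{H^1}^{1/2}$, hence $\|Wu\|_{L^2}\le \|W\|_{L^2}\|u\|_\infty\le C\|W\|_{L^2}\|u\|_{L^2}^{1/2}\|u\|_{H^1}^{1/2}$.

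So the first step is to fix a cutoff $\varphi\in\cinfz{(0,1]}$ with $\varphi\equiv 1$ on a neighborhood of $[\delta,1]$; then $W=W\varphi$ and for $u\in L^2[0,1]$,
\[
   \bigl\|W(H(\theta_0,\theta_1)+z)\ii u\bigr\|_{L^2}\le C\|W\|_{L^2}\,\bigl\|\varphi(H+z)\ii u\bigr\|_{H^1}^{1/2}\,\bigl\|(H+z)\ii u\bigr\|_{L^2}^{1/2}.
\]
Now I would plug in \eqref{eq:DetClass3} to bound the $H^1$-factor by $C_\varphi|z|^{-1/2}\|u\|_{L^2}$ and \eqref{eq:DetClass1} to bound the $L^2$-factor by $C|z|^{-1}\|u\|_{L^2}$. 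Multiplying the two half-powers gives $|z|^{-1/4}|z|^{-1/2}=|z|^{-3/4}$, which is actually slightly better than the claimed $|z|^{-2/3}$; in any case it yields \eqref{eq:InterpolationInequalityA} with a constant $C_\delta$ depending only on $\delta$ (through the fixed choice of $\varphi$) and the uniform constants from Definition \plref{def:DetClass}. The stated exponent $-2/3$ is presumably all that is needed downstream, so recording the weaker bound is harmless, but I would actually state the sharper $-3/4$ in the write-up.

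For \eqref{eq:InterpolationInequalityB} the argument is even simpler and does not need the cutoff: if $W\in L^\infty[0,1]$ then multiplication by $W$ is bounded on $L^2$ with norm $\|W\|_\infty$, so
\[
   \bigl\|W(H(\theta_0,\theta_1)+z)\ii\bigr\|\le \|W\|_\infty\,\bigl\|(H(\theta_0,\theta_1)+z)\ii\bigr\|\le C\|W\|_\infty\,|z|^{-1}
\]
directly from \eqref{eq:DetClass1}. This is immediate and needs no interpolation.

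The only mild obstacle is bookkeeping the dependence of constants: one must check that the constant in \eqref{eq:InterpolationInequalityA} depends on $W$ only through $\|W\|_{L^2}$ and on $\delta$ only through the choice of a single fixed cutoff $\varphi$ adapted to $[\delta,1]$ (not, say, through $\|W\|_\infty$), and that all the resolvent estimates are uniform for $z\ge z_0$. Since $\varphi$ can be chosen once and for all depending on $\delta$, and \eqref{eq:DetClass1} and \eqref{eq:DetClass3} are uniform in $z$, this is routine. I would also note in passing the elementary inequality $\|u\|_\infty\le C\|u\|_{L^2}^{1/2}\|u\|_{H^1}^{1/2}$ on the bounded interval $[0,1]$, which is the one nontrivial analytic input and follows from the fundamental theorem of calculus together with Cauchy--Schwarz.
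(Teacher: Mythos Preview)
Your argument is correct, and in fact yields the sharper exponent $-3/4$ as you note. The paper's proof follows the same overall strategy---choose a cutoff $\varphi$ with $\varphi\equiv 1$ on $[\delta,1]$, then interpolate between \eqref{eq:DetClass1} and \eqref{eq:DetClass3}---but carries out the interpolation differently: it invokes the complex interpolation method to obtain $\|\varphi(H(\theta_0,\theta_1)+z)^{-1}\|_{L^2\to H^s}\le C_s|z|^{-1+s/2}$ for $0\le s\le 1$, and then uses the Sobolev embedding $H^s[0,1]\subset C[0,1]$, valid only for $s>1/2$, to bound multiplication by $W\in L^2$. Since the embedding fails at the endpoint $s=1/2$, the paper settles for $s=2/3$, giving the exponent $-2/3$. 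Your use of the elementary Gagliardo--Nirenberg bound $\|u\|_\infty\le C\|u\|_{L^2}^{1/2}\|u\|_{H^1}^{1/2}$ effectively captures the borderline case and is both more elementary (no abstract interpolation theory) and slightly stronger. The treatment of \eqref{eq:InterpolationInequalityB} is identical in both proofs.
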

\begin{proof} Choose a cut-off function $\varphi\in \cinfz{(0,1]}$ with $\varphi(x)=1$ for $x\ge \delta$.
Then \eqref{eq:DetClass1}, \eqref{eq:DetClass3} and the complex interpolation method \cite[Sec. 4.2]{Tay:PDEI}
yield for $0\le s\le 1$
\begin{equation}\label{eq:ML1003231}
 \bigl \|\varphi  \bigl( H(\theta_0,\theta_1)+z\bigr)\ii \bigr\|_{L^2\to H^s} \le C_s |z|^{-1+s/2}.
\end{equation}
By Sobolev embedding we have $H^s[0,1]\subset C[0,1]$ for $s>1/2$ and thus for these $s$ multiplication by $W$
is continuous $H^s\to L^2$ with norm bounded by $C_s \|W\|_{L^2}$. Combining this with
\eqref{eq:ML1003231} gives
\begin{equation}
  \bigl \|W  \bigl( H(\theta_0,\theta_1)+z\bigr)\ii \bigr\|_{L^2\to L^2} 
       \le  C_{s,\delta} \|W\|_{L^2}  |z|^{-1+s/2}.
\end{equation}
\eqref{eq:InterpolationInequalityA} follows by putting $s=2/3$, \eqref{eq:InterpolationInequalityB}
is obvious from \eqref{eq:DetClass1}.
\end{proof}

\begin{lemma}\label{l:DetClassCriterion} Let $V\in \sVdet$. 
If $W=W_1+W_2, W_1\in L^\infty[0,1], W_2\in L^2_\comp(0,1]$ then $V+X W\in \sVdet$.
\footnote{Note that then $H+W= l_\nu+X^{-1}(V+X W)$.}

Consequently, if $V_1\in \sVdet, V_2\in\sV_\nu$ and $V_1(x)=V_2(x)$
for almost all $x$ in a neighborhood of $0$ then $V_2\in\sVdet$.
Furthermore, there is a constant depending only on $H(\theta_0,\theta_1)$ and the support of $W_2$ such
that for $z\ge z_0$
\begin{equation}\label{eq:ResolventComparison}
\begin{split}
   \bigl\|\bigl(H(\theta_0&,\theta_1)+W+z\bigr )\ii - \bigl(H(\theta_0,\theta_1)+z\bigr)\ii \bigr\|_{\tr}\\
              &  \le C \bigl(\|W_1\|_{\infty}+\|W_2\|_{L^2}\bigr)\; |z|^{-7/6}.
\end{split}
\end{equation}
\end{lemma}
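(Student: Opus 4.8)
The plan is to prove the three stability statements of Lemma~\ref{l:DetClassCriterion} in the order in which they become available: first $V+XW\in\sVdet$, then the criterion about coinciding potentials near $0$, and finally the trace-norm resolvent comparison \eqref{eq:ResolventComparison}. For the first claim I would write $W=W_1+W_2$ and prove the three defining estimates \eqref{eq:DetClass1}--\eqref{eq:DetClass3} for $H(\theta_0,\theta_1)+W$ using a perturbative argument off $H(\theta_0,\theta_1)$. The point is that $H+W=l_\nu+X^{-1}(V+XW)$, so $W$ enters exactly as a bounded-plus-compactly-supported-$L^2$ multiplication operator; by Lemma~\ref{l:InterpolationInequality}, $\|W(H(\theta_0,\theta_1)+z)\ii\|\le C(\|W_1\|_\infty|z|^{-1}+\|W_2\|_{L^2}|z|^{-2/3})\to 0$ as $z\to\infty$, so the Neumann series
\begin{equation}\label{eq:NeumannW}
     \bigl(H(\theta_0,\theta_1)+W+z\bigr)\ii=\sum_{n\ge 0}(-1)^n\bigl(H(\theta_0,\theta_1)+z\bigr)\ii\Bigl[W\bigl(H(\theta_0,\theta_1)+z\bigr)\ii\Bigr]^n
\end{equation}
converges in operator norm for $z\ge z_1$, and each factor $W(H(\theta_0,\theta_1)+z)\ii$ carries a decaying prefactor. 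From this one reads off \eqref{eq:DetClass1} directly; \eqref{eq:DetClass2} follows because the $n=0$ term is already trace class of size $O(|z|^{-1/2})$ by \eqref{eq:DetClass2} for $V$ and all further terms gain an extra norm-decaying factor; and \eqref{eq:DetClass3} follows by multiplying \eqref{eq:NeumannW} on the left by $\varphi\in\cinfz{(0,1]}$ and using \eqref{eq:DetClass3} for the outermost resolvent together with the decay of the remaining factors. A harmless enlargement of $z_0$ absorbs the finitely many potentially problematic small values of $z$.

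The second claim is then purely formal: if $V_1\in\sVdet$ and $V_2\in\sV_\nu$ agree a.e.\ near $0$, write $V_2=V_1+X\cdot X^{-1}(V_2-V_1)$. The difference $V_2-V_1$ vanishes a.e.\ on a neighborhood $[0,2\delta)$ of $0$, so $X^{-1}(V_2-V_1)$ is supported in $[\delta,1]$ for some $\delta>0$; on $[\delta,1]$, $X^{-1}=x^{-1}$ is bounded, hence $X^{-1}(V_2-V_1)\in L^2_\comp(0,1]$. Thus $V_2$ is obtained from $V_1$ by adding $X$ times an element of the class $L^\infty+L^2_\comp$ covered by the first part (with $W_1=0$), and the first claim gives $V_2\in\sVdet$.

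For the trace-norm estimate \eqref{eq:ResolventComparison} I would subtract the $n=0$ term in \eqref{eq:NeumannW}:
\begin{equation}\label{eq:DiffSum}
     \bigl(H(\theta_0,\theta_1)+W+z\bigr)\ii-\bigl(H(\theta_0,\theta_1)+z\bigr)\ii=\sum_{n\ge 1}(-1)^n\bigl(H(\theta_0,\theta_1)+z\bigr)\ii\Bigl[W\bigl(H(\theta_0,\theta_1)+z\bigr)\ii\Bigr]^n.
\end{equation}
Estimate a single summand by pulling out one resolvent in trace norm and the rest in operator norm: $\|(H(\theta_0,\theta_1)+z)\ii[W(H(\theta_0,\theta_1)+z)\ii]^n\|_{\tr}\le\|(H(\theta_0,\theta_1)+z)\ii\|_{\tr}\,\|W(H(\theta_0,\theta_1)+z)\ii\|^n$. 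By \eqref{eq:DetClass2} the trace-norm factor is $O(|z|^{-1/2})$, and by Lemma~\ref{l:InterpolationInequality} the operator-norm factor is $\le\eta(z):=C(\|W_1\|_\infty|z|^{-1}+\|W_2\|_{L^2}|z|^{-2/3})$, which for $z\ge z_1$ (again enlarging $z_0$ if necessary) satisfies $\eta(z)\le\tfrac12$. Hence the sum over $n\ge1$ is bounded by $C|z|^{-1/2}\sum_{n\ge1}\eta(z)^n\le 2C|z|^{-1/2}\eta(z)\le C'(\|W_1\|_\infty+\|W_2\|_{L^2})|z|^{-1/2-2/3}$, and $-1/2-2/3=-7/6$, which is \eqref{eq:ResolventComparison} with a constant depending only on $H(\theta_0,\theta_1)$ and $\supp W_2$ (through $\delta$ in Lemma~\ref{l:InterpolationInequality}). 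The main technical point — really the only one where care is needed — is the uniform convergence of the Neumann series, i.e.\ checking that the perturbation $W(H(\theta_0,\theta_1)+z)\ii$ genuinely has small norm for large $z$; this is exactly the content of Lemma~\ref{l:InterpolationInequality}, whose $|z|^{-2/3}$ gain for the $L^2_\comp$ part (rather than the naive $|z|^{-1/2}$) is what makes the exponent $-7/6$ rather than something weaker come out, and everything else is bookkeeping with trace-norm/operator-norm Hölder inequalities.
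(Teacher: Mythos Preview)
Your proposal is correct and follows essentially the same approach as the paper: invoke Lemma~\ref{l:InterpolationInequality} to control $\|W(H(\theta_0,\theta_1)+z)^{-1}\|$, run the Neumann series \eqref{eq:NeumannW}, read off \eqref{eq:DetClass1}--\eqref{eq:DetClass3} and the trace-norm bound \eqref{eq:ResolventComparison} from it, and deduce the second claim by writing $W=X^{-1}(V_2-V_1)\in L^2_\comp(0,1]$. The paper's own proof is in fact terser than yours; your expanded bookkeeping (separating the $n=0$ term, summing the geometric tail explicitly, and tracking the exponent $-1/2-2/3=-7/6$) is exactly what is implicit in the paper's phrase ``also \eqref{eq:ResolventComparison} immediately follows.''
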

\begin{proof} 
It follows from Lemma \plref{l:InterpolationInequality} that
for $z$ large enough we can employ the Neumann series
\begin{equation}\label{eq:NeumannSeries}
\begin{split}
   (H(\theta_0,&\theta_1)+W+z)\ii - (H(\theta_0,\theta_1)+z)\ii \\
    &=\sum_{n=1}^\infty (-1)^n \bigl(H(\theta_0,\theta_1)+z\bigr)\ii \bigl(W \bigl(H(\theta_0,\theta_1)+z\bigr)\ii\bigr)^n
\end{split}
\end{equation}
and \eqref{eq:DetClass1}, \eqref{eq:DetClass2}, \eqref{eq:DetClass3} follow
for $H(\theta_0,\theta_1)+W$; also \eqref{eq:ResolventComparison} immediately follows.

The second claim follows from the first with $W=X^{-1}(V_2-V_1)\in L^2_\comp(0,1]$.
\end{proof}

\begin{prop}\label{p:DetClassCriterion} Let $V\in \sV_\nu$ be real valued in a neighborhood of $0$. Then $V\in \sVdet$.
\end{prop}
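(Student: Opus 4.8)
The plan is to reduce the case $0<\theta_0<\pi$ to the Friedrichs case $\theta_0=0$ treated in Theorem \plref{5-b-t}, using the factorization machinery of Section \plref{ss:FactorOperator}. Since being of determinant class is a local condition at $0$ by the second part of Lemma \plref{l:DetClassCriterion}, we may assume that $V$ is real valued on all of $[0,1]$: replace $V$ by a real valued $\tilde V\in\sV_\nu$ agreeing with $V$ near $0$, prove $\tilde V\in\sVdet$, and conclude $V\in\sVdet$. With $V$ real, for any admissible pair $(\theta_0,\theta_1)$ with $0<\theta_0<\pi$ (the case $\theta_0=0$ being already covered by Theorem \plref{5-b-t}, which gives \eqref{eq:DetClass1}--\eqref{eq:DetClass3} for $H(0,\theta)$), Proposition \plref{p:FactorH} produces a real nowhere-vanishing $\go(x)=x^{-\nu+1/2}\tilde\go(x)$, $\tilde\go\in\sA$, with $B_{0,\theta_0}\go=0$, $B_{1,\vartheta_1}\go=0$, and closed extensions $D_1,D_2$ of the first order operators $d_1,d_2$ from \eqref{eq:d1d2} such that
\[
    H(\theta_0,\theta_1)=D_1D_2+W,\qquad W\in L^2_\comp(0,1).
\]
Moreover the construction is arranged (Cases II and III of Section \plref{p:Cases}) so that $D_2D_1=H_{21}(\theta_0',\theta_1')$ is a regular singular Sturm-Liouville operator $l_{\mu+1}+X\ii V_{21}$ with $V_{21}\in\sV_\nu$, $\mu+1=1-\nu>0$, and with the \emph{Friedrichs} boundary condition at $0$ (i.e.\ $\theta_0'=0$). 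Since $\mu+1=1-\nu$ may equal $1$ and $D_2D_1$ then involves the limit point case, one simply notes $\theta_0'=0$ is always admissible; Theorem \plref{5-b-t} applies to $D_2D_1$ and yields the trace-class resolvent, the asymptotic expansion $\frac{a}{\sqrt z}+\frac bz+R_3(z)$, and in particular the bounds $\|(D_2D_1+z)\ii\|=O(z\ii)$, $\|(D_2D_1+z)\ii\|_{\tr}=O(z^{-1/2})$, and $\|\varphi(D_2D_1+z)\ii\|_{L^2\to H^1}=O(z^{-1/2})$.

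The core step is transporting these bounds from $D_2D_1$ to $D_1D_2$, and hence to $H(\theta_0,\theta_1)=D_1D_2+W$. For the norm and trace-norm bounds one uses the identity, valid for $z$ outside the (common nonzero) spectrum,
\[
   (D_1D_2+z)\ii = \frac 1z\Bigl(I - D_1(D_2D_1+z)\ii D_2\Bigr)\quad\text{on }\dom(D_1D_2),
\]
together with $\spec D_1D_2 = \spec D_2D_1\cup\{0\}$ and the commutation $(D_1D_2+z)\ii D_1 = D_1(D_2D_1+z)\ii$; since $D_1(D_2D_1+z)\ii=(D_2D_1+\bar z)^{-*}D_1^t{}^*\cdots$ is built from bounded (resp.\ trace class after composing one more resolvent factor) operators one gets $\|(D_1D_2+z)\ii\|=O(z\ii)$ and, writing $(D_1D_2+z)\ii = (D_1D_2+z)^{-1/2}(D_1D_2+z)^{-1/2}$ with the trace class estimate coming from the $D_2D_1$ side, $\|(D_1D_2+z)\ii\|_{\tr}=O(z^{-1/2})$. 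For \eqref{eq:DetClass3} one combines $d_1=\frac{d}{dx}+\frac{\go'}{\go}$ with the elliptic estimate for $D_2D_1$: for $\varphi\in\cinfz{(0,1]}$, $\varphi(D_1D_2+z)\ii$ maps $L^2\to L^2$ with norm $O(z^{-1})$ and its derivative, via $\frac{d}{dx}\varphi(D_1D_2+z)\ii = (d_1-\frac{\go'}{\go})\varphi\cdots$, is controlled using that $d_2\varphi(D_1D_2+z)\ii$ equals $\varphi' (\cdots)$ plus $\varphi d_2(D_1D_2+z)\ii = \varphi(D_2D_1+z)\ii d_2(\cdots)$ — here $\frac{\go'}{\go}$ is bounded on $\supp\varphi$ away from $0$ — giving $O(z^{-1/2})$ into $H^1$. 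Finally, adding the compactly supported $L^2$ potential $W$ is harmless: by Lemma \plref{l:InterpolationInequality} and Lemma \plref{l:DetClassCriterion} (applied with $D_1D_2$ in place of $H(\theta_0,\theta_1)$ — these lemmas only use \eqref{eq:DetClass1}--\eqref{eq:DetClass3}, which we have just verified for $D_1D_2$) the Neumann series \eqref{eq:NeumannSeries} converges and propagates \eqref{eq:DetClass1}--\eqref{eq:DetClass3} to $D_1D_2+W=H(\theta_0,\theta_1)$.

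The main obstacle, I expect, is the operator-theoretic bookkeeping for $D_1D_2$ versus $D_2D_1$: one must check that the chosen closed extensions $D_1,D_2$ are genuinely adjoint up to the (real) weight $\go$, that the intertwining identities $(D_1D_2+z)D_1 = D_1(D_2D_1+z)$ hold on the correct domains (not merely formally), and that the resulting operator $D_1(D_2D_1+z)\ii$ — which a priori only maps into $L^2$ — inherits the trace-class and $H^1$-mapping properties after composing with a further resolvent factor or a cut-off. The weight $\go$ is only $AC^2$ near $0$ with the $\sA$-regularity from Proposition \plref{les-4}, so one must be slightly careful that $\go'/\go$, while singular like $(-\nu+1/2)/x$ at $0$, does not destroy the domain identifications — but this is exactly what Section \plref{ss:FactorOperator} and the case analysis of Section \plref{p:Cases} were set up to handle, and away from $0$ (where $W$ lives and where \eqref{eq:DetClass3} is tested) $\go'/\go$ is smooth and bounded.
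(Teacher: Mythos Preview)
Your high-level strategy matches the paper's: reduce to $V$ real everywhere, factorize via Proposition \plref{p:FactorH}, use that $D_2D_1$ carries the Friedrichs condition at $0$ so Theorem \plref{5-b-t} applies, and then transport the estimates to $D_1D_2$. But you are working much harder than necessary in the transport step, and the paper's proof shows why. Once $V$ and hence $\go$ are real, one has $d_1=d_2^t$, and the boundary conditions chosen in Proposition \plref{p:FactorH} are mutually dual, so $D_1=D_2^*$ and $D_1D_2=D^*D$ with $D:=D_2$. (The paper in fact modifies $V$ a bit further so that $H(\theta_0,\theta_1)=D^*D$ exactly, with no remainder $W$; this is permitted by Lemma \plref{l:DetClassCriterion}.) Now $D^*D$ is self-adjoint and non-negative, so \eqref{eq:DetClass1} is immediate from the Spectral Theorem; \eqref{eq:DetClass3} follows since $\|D(D^*D+z)^{-1}\|\le z^{-1/2}$ and $\frac{d}{dx}=-D+\frac{\go'}{\go}$ with $\go'/\go$ bounded on $\supp\varphi$; and \eqref{eq:DetClass2} follows because for a non-negative self-adjoint operator $\|(D^*D+z)^{-1}\|_{\tr}=\sum_n(\lambda_n+z)^{-1}$ depends only on the spectrum, while $\spec D^*D\setminus\{0\}=\spec DD^*\setminus\{0\}$ and $DD^*$ has the Friedrichs condition at $0$.

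The intertwining route you sketch is not just longer---without self-adjointness it has a genuine gap. The trace norm of a resolvent is \emph{not} determined by the spectrum for a non-normal operator, so the identity $\spec D_1D_2\setminus\{0\}=\spec D_2D_1\setminus\{0\}$ does not by itself transfer \eqref{eq:DetClass2}. Likewise your formula $(D_1D_2+z)^{-1}=\frac1z\bigl(I-D_1(D_2D_1+z)^{-1}D_2\bigr)$ involves the unbounded $D_2$ on the right and gives no useful bound on $\|(D_1D_2+z)^{-1}\|$ without further input---and that input is precisely $D_1=D_2^*$. The ``main obstacle'' you flag dissolves once you use this. (Minor point: for $0<\theta_0<\pi$ the relevant cases in Section \plref{p:Cases} are I and II, not II and III.)
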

Together with Lemma \ref{l:DetClassCriterion} this shows that at least potentials of the form $V+\gl$, where
$V\in \sV_\nu$ is real valued and $\gl\in \C$, are of determinant class.
\begin{proof} In view of Lemma \plref{l:DetClassCriterion} and Proposition \plref{p:FactorH} we may change
$V$ outside a neighborhood of $0$ such that $V$ becomes real valued everywhere and such that
$H(\theta_0,\theta_1)=D^*D$, where $D$ is a closed extension of $d=-\frac{d}{dx}+\go'/\go$. For the properties of
$\go$ see Proposition \plref{p:FactorH}. Note that since $V$ is real valued we may choose $\go$ to be real valued, too
and hence, in the notation of Proposition \plref{p:FactorH}, $D_1=D_2^*$.

Since $D^*D$ is self--adjoint, elliptic and non-negative \eqref{eq:DetClass1}, \eqref{eq:DetClass3} follow immediately
from the Spectral Theorem. If $\theta_0=0$ then \eqref{eq:DetClass2} follows from Theorem \plref{5-b-t}. If $\theta_0\not=0$
then by Proposition \plref{p:FactorH} the operator $DD^*$ has Dirichlet boundary condition at $0$. Hence
by Theorem \plref{5-b-t} the estimate \eqref{eq:DetClass2} holds for $DD^*$. Since for a non-negative operator
the estimate \eqref{eq:DetClass2} depends only on the spectrum and since $\spec DD^*\cup \{0\}=\spec D^*D\cup\{0\}$
we reach the conclusion.
\end{proof}

Next we prove two comparison results for the asymptotics of the resolvent in
the trace norm. These will then lead to an asymptotic expansion of the trace
of the resolvent for $H(\theta_0,\theta_1)$ for arbitrary admissible boundary
conditions and all determinant class potentials. The technique used in the
first comparison result is well--known for elliptic operators with smooth
coefficients on manifolds (cf. e.g. \cite[Appendix B]{LMP:CCC}). We have to be
slightly more careful here due to the low regularity assumptions on the
potential.

\begin{prop}\label{p:Comparison}
Let $V_j\in \sVdet, j=1,2$ with $V_2-V_1\in L^2_\comp(0,1]$, that is there is
a $\delta>0$ such that $V_1(x)=V_2(x)$ for $0\le x\le \delta$.  Let
$H_j=l_\nu+X\ii V_j$ be the corresponding regular singular Sturm-Liouville
operators and let $B_{\theta_0},B_{\theta_1}$ resp. $B_{\tilde\theta_1}$, be
admissible boundary conditions for $H_j$. Then there is a $z_0\ge 0$ such for
any $\delta'<\delta$ and $z\ge z_0$ the difference
$\bigl(H_1(\theta_0,\theta_1)+z\bigr)^{-1}-\bigl(H_2(\theta_0,\tilde
\theta_1)+z\bigr)^{-1}$
restricted to $L^2[0,\delta']$ is of trace class and 
\[
\bigl\|\bigl((H_1(\theta_0,\theta_1)+z)^{-1}-(H_2(\theta_0,\tilde\theta_1)+z)^{-1}\bigr)\bigr|_{L^2[0,\delta']}\bigr\|_{\tr}=O(|z|^{-3/2}),
\quad z\ge z_0,  z\in \R_+.
\]
\end{prop}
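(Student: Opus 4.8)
The plan is to construct both resolvents out of the \emph{same} local building blocks near $0$ and to exploit that $V_1=V_2$ on $[0,\delta]$ so that the local parametrices there literally coincide. Fix cut-off functions $\varphi,\psi\in C^\infty(\R)$ with $\varphi\equiv 1$ on $[0,\delta']$, $\supp\varphi\subset[0,(\delta'+\delta)/2]$, and $\psi\equiv 1$ on $\supp\varphi$, $\supp\psi\subset[0,\delta]$. Since $\psi V_1=\psi V_2$, the localized operators $\psi H_1\psi$ and $\psi H_2\psi$ have the same differential expression; more to the point, the interior boundary condition at $0$ is the same (the admissible $B_{0,\theta_0}$ enters both $H_j$), so one can build a single local resolvent $Q(z)$ on $[0,\delta]$ — e.g.\ the resolvent of $H_1(\theta_0,\theta_1')$ for some auxiliary boundary condition $\theta_1'$ at $\delta$ if one cuts the interval, or more simply $Q(z)=(H_1(\theta_0,\theta_1)+z)^{-1}$ itself — and use it as a parametrix for \emph{both} global operators near $0$.

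The standard pasting identity is the key computation. Write, for $i=1,2$,
\begin{equation}
   (H_i(\theta_0,\theta_i)+z)^{-1}\varphi = \psi\, Q(z)\,\varphi + (H_i(\theta_0,\theta_i)+z)^{-1}\,\bigl[\varphi, -\tfrac{d^2}{dx^2}\bigr]\,Q(z)\,\varphi + (H_i+z)^{-1}(1-\psi)\,\ast,
\end{equation}
the point being that the commutator $[\varphi,-d^2/dx^2] = -\varphi'' - 2\varphi' \tfrac{d}{dx}$ is a first-order operator supported in $[\delta',\delta]$, hence \emph{away from} the problematic singularity at $0$, and $(1-\psi)$ is also supported away from $0$. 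Subtracting the $i=1$ and $i=2$ identities, the leading terms $\psi Q(z)\varphi$ cancel exactly, and one is left with a sum of terms each of which is a product of at least one global resolvent $(H_i+z)^{-1}$ against a first-order operator with support bounded away from $0$, followed by $Q(z)\varphi$ or a remainder. On such supports we may invoke Lemma~\plref{l:InterpolationInequality}: multiplication by an $L^2_\comp(0,1]$ function composed with $(H_i(\theta_0,\theta_i)+z)^{-1}$ gains $|z|^{-2/3}$, and the derivative $\varphi' \tfrac{d}{dx}$ composed with a resolvent is controlled using \eqref{eq:DetClass3} (the $L^2\to H^1$ bound) which gives $|z|^{-1/2}$; together with the trace-norm factor $|z|^{-1/2}$ from \eqref{eq:DetClass2} on one resolvent and operator-norm decay $|z|^{-1}$ from \eqref{eq:DetClass1} on the others, one assembles products whose trace norm is $O(|z|^{-3/2})$. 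One has to be mildly careful to arrange that exactly one factor is taken in trace norm and the rest in operator norm, and that the interpolation estimate \eqref{eq:ML1003231} is used for the factor carrying a derivative; counting the powers, $|z|^{-1/2}$ (trace) $\times\, |z|^{-1/2}$ (the $H^1$-gain near the commutator) $\times\, |z|^{-1/2}$ (another operator-norm resolvent on $\supp\varphi'$) $= |z|^{-3/2}$, and all other terms decay at least as fast.

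The main obstacle is bookkeeping rather than conceptual: making the pasting identity rigorous at the level of \emph{domains} when the two operators $H_1(\theta_0,\theta_1)$ and $H_2(\theta_0,\tilde\theta_1)$ may have different boundary conditions at $1$ and, in the factorizable regime, different form domains. The cut-off $\varphi$ is supported in $[0,\delta']$ with $\delta'<\delta$, so it kills everything near $x=1$ and the right-end boundary conditions are irrelevant for $Q(z)\varphi$; what needs checking is that $\psi Q(z)\varphi$ maps into $\dom(H_i(\theta_0,\theta_i))$, which holds because $\psi$ vanishes near $1$ (so the right condition is vacuous) and $Q(z)\varphi$ already satisfies the correct condition at $0$ since $\theta_0$ is common to both — here one uses that $V_1=V_2$ near $0$ so the functional $c_2$ (which depends on $V$) is the same for both operators on the relevant functions, cf.\ Corollary~\plref{cor:MaxDom}. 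Once the identity is justified, the estimate is a routine application of Lemmata~\plref{l:InterpolationInequality} and the determinant-class bounds \eqref{eq:DetClass1}--\eqref{eq:DetClass3}, exactly in the spirit of \cite[Appendix B]{LMP:CCC} but tracking the low-regularity constants explicitly.
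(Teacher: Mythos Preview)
Your parametrix-pasting strategy is the right circle of ideas, and the domain discussion (why $\psi Q(z)\varphi$ lies in $\dom(H_i)$ for both $i$, using that $V_1=V_2$ on $\supp\psi$ and that the $\theta_0$-condition is common) is essentially what is needed.  But the power counting has a genuine gap.

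First a minor point: in the identity you want, the commutator arises from moving $H_i+z$ past the \emph{outer} cutoff $\psi$, not $\varphi$.  The correct form of the pasting identity is
\[
   (H_i+z)^{-1}\varphi \;=\; \psi\,Q(z)\,\varphi \;-\; (H_i+z)^{-1}\,[-\partial_x^2,\psi]\,Q(z)\,\varphi,
\]
obtained from $(H_i+z)\bigl(\psi Q(z)\varphi\bigr)=\varphi+[-\partial_x^2,\psi]Q(z)\varphi$ (here one uses $\psi\varphi=\varphi$ and $H_1=H_2$ on $\supp\psi$).  After subtraction the leading term cancels and each remaining summand has the shape
\[
   (H_i+z)^{-1}\,[-\partial_x^2,\psi]\,Q(z)\,\varphi.
\]
This contains only \emph{two} resolvents and \emph{one} commutator.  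Taking trace norm on one resolvent via \eqref{eq:DetClass2} and controlling $[-\partial_x^2,\psi]\,\chi Q(z)$ by the $L^2\to H^1$ bound \eqref{eq:DetClass3} gives only $O(|z|^{-1/2})\cdot O(|z|^{-1/2})=O(|z|^{-1})$.  Your ``third factor $|z|^{-1/2}$ from another operator-norm resolvent on $\supp\varphi'$'' is not present in the expression: there is no further resolvent to harvest, and one cannot extract both the $H^1$-gain and an additional $|z|^{-1/2}$ from the same copy of $Q(z)$.

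The paper closes this gap by a double-commutator manoeuvre.  With $R(z)=\psi\bigl[(H_1+z)^{-1}-(H_2+z)^{-1}\bigr]\phi$ (here $\phi,\psi$ are two nested cutoffs, both $\equiv 1$ on $[0,\delta']$, with $\supp\phi\cap\supp\psi'=\emptyset$), one commutes $(H_1+z)$ through $\psi$ on the left \emph{and} $(H_2+z)$ through $\phi$ on the right (via the adjoint), obtaining
\[
  R(z)=(H_1+z)^{-1}[-\partial_x^2,\psi]\bigl((H_1+z)^{-1}-(H_2+z)^{-1}\bigr)[\partial_x^2,\phi](H_2+z)^{-1}.
\]
Now there are three resolvents and two interior-supported commutators, and the count $|z|^{-1/2}\cdot|z|^{-1/2}\cdot|z|^{-1/2}=|z|^{-3/2}$ goes through.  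Your approach can be repaired in the same spirit: apply the pasting identity a second time, from the left (to $\varphi(H_i+z)^{-1}$, via the adjoint), which inserts a second commutator and a third resolvent.  As written, however, the argument stops one step short and only yields $O(|z|^{-1})$.
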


\pagebreak[3]
\begin{proof}
We choose cut-off functions $\phi, \psi\in C^{\infty}_0[0,\delta)$, cf. Figure \plref{fig:CutOff}, such that they are identically one over $[0,\delta']$ and 
\begin{itemize}
\item $\supp(\phi)\subset \supp(\psi)$,
\item $\supp(\phi)\cap \supp(d\psi)=\emptyset$.
\end{itemize}

\begin{figure}[h]
\begin{center}

\begin{tikzpicture}[scale=1.3]
\draw[->] (-0.2,0) -- (7.7,0);
\draw[->] (0,-0.2) -- (0,2.2);

\draw (-0.2,2) node[anchor=east] {$1$};
\draw (0.5,-0.2) node[anchor=north] {$\delta'$} -- (0.5,0.2);
\draw (7.5,-0.2) node[anchor=north] {$\delta$} -- (7.5,0.2);

\draw (0,2) -- (4,2);
\draw (1,2) .. controls (2.4,2) and (1.6,0) .. (3,0);
\draw[dashed] (1,-0.2) -- (1,2.2);
\draw[dashed] (3,-0.2) -- (3,2.2);
\draw (4,2) .. controls (5.4,2) and (4.6,0) .. (6,0);
\draw[dashed] (4,-0.2) -- (4,2.2);
\draw[dashed] (6,-0.2) -- (6,2.2);
\draw (1.5,1) node {$\phi$};
\draw (4.5,1) node {$\psi$};

\end{tikzpicture}

\caption{The cutoff functions $\phi$ and $\psi$.}
\label{fig:CutOff}
\end{center}
\end{figure}
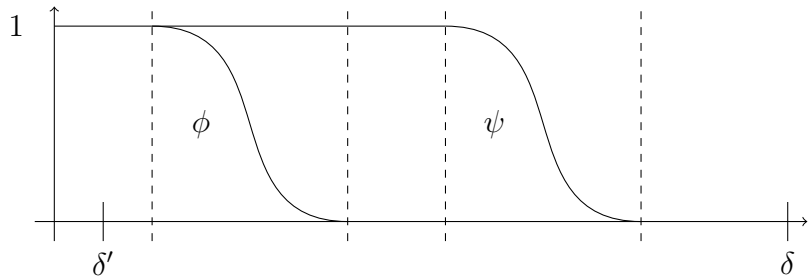

In particular these conditions yield $\psi\phi=\phi$. 
In this proof we will write for brevity $H_1$ instead of $H_1(\theta_0,\theta_1)$
and $H_2$ instead of $H_2(\theta_0,\tilde\theta_1)$.

We now consider 
\[
R(z):=\psi\bigl[(H_1+z)^{-1}-(H_2+z)^{-1}\bigr]\phi.\]
$R(z)$ maps into the domain of $H_j$ and on the support of $\psi$ the
differential expressions $H_1$ and $H_2$ coincide; moreover $\psi \dom(H_1)=\psi\dom(H_2)$. Thus 
$(H_1+z)R(z)=[H_1,\psi]\bigl((H_1+z)^{-1}-(H_2+z)^{-1}\bigr)$.
Arguing similarly for $R(z)^*$ and taking adjoints one then finds
\begin{align*}
(H_1+z)R(z)(H_2+z)=[-\partial_x^2, \psi]\bigl((H_1+z)^{-1}-(H_2+z)^{-1}\bigr)[\partial_x^2,\phi],
\end{align*}
where $[\cdot, \cdot]$ denotes the commutator of the corresponding operators and any function is viewed as a multiplication operator.
Hence
\begin{align*}
R(z)=(H_1+z)^{-1}[-\partial_x^2,\psi]\bigl((H_1+z)^{-1}-(H_2+z)^{-1}\bigr) [\partial_x^2,\phi](H_2+z)^{-1}
\end{align*}
and thus
\begin{align*}
\|R(z)\|_{\tr}\leq \|(H_1&+z)^{-1}\|_{\tr}\Bigl(\|[\partial_x^2,\psi](H_1+z)^{-1}\|+\|[\partial_x^2,\psi](H_2+z)^{-1}\|\Bigr)\cdot\\
                         &\cdot \|[\partial_x^2,\phi](H_2+z)^{-1}\|.
\end{align*} 
By \eqref{eq:DetClass2}
we have $\|(H_1+z)^{-1}\|_{\tr}=O(|z|^{-1/2})$. Let $f$ denote $\psi$ or $\phi$. Then
$[\partial_x^2, f]$ is a first order differential operator whose coefficients are compactly supported in $(0,1)$, hence it maps $H^1[0,1]$
continuously into $L^2_{\comp}(0,1)$. Therefore by \eqref{eq:DetClass3}, with a cut--off function $\chi\in\cinfz{(0,1)}$ with
$\chi=1$ in a neighborhood of $\supp ([\pl_x^2,f])$,
\[
\|[\partial_x^2,f](H_j+z)^{-1}\|\le \|[\pl_x^2,f]\|_{H^1\to L^2}\|\chi (H_j+z)\ii\|_{L^2\to H^1} =O(|z|^{-1/2})
\]
for $z\ge z_0$. Hence $\|[\partial_x^2,\phi](H_j+z)^{-1}\|=O(|z|^{-1/2})$ and $\|[\partial_x^2,\psi](H_j+z)^{-1}\|=O(|z|^{-1/2})$ 
and the proposition is proved.
\end{proof}

We note that in this proof the estimate \eqref{eq:DetClass2} was used only for $H_1$. 

\begin{prop}\label{p:Comparison2} Let $V_j\in\sVdet$, $H_j=l_\nu+X^{-1}V_j,
 j=1,2,$ and let
$B_{0,\tilde\theta_0}, B_{0,\theta_0}, B_{1,\theta_1}$ be admissible boundary conditions.
Then for any $\delta>0$
\[
\bigl\|\bigl((H_1(\theta_0,\theta_1)+z)\ii-(H_2(\tilde\theta_0,\theta_1)+z)^{-1}\bigr)\bigr|_{L^2[\delta,1]}\bigr\|_{\tr}=O(|z|^{-3/2}),
\quad z\ge z_0,  z\in \R_+.
\]
\end{prop}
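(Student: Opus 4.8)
The plan is to rerun the boundary-localization argument from the proof of Proposition~\ref{p:Comparison}, but now at the \emph{regular} end point $x=1$, where $H_1(\theta_0,\theta_1)$ and $H_2(\tilde\theta_0,\theta_1)$ carry the identical boundary condition $B_{1,\theta_1}$; the condition at $0$, at which the two operators differ, drops out because we localize with cut-offs vanishing near $0$, which renders the singular boundary condition vacuous. First I would fix $0<\delta''<\delta'<\delta$ and pick $\phi,\psi\in\cinfz{(0,1]}$ with $\phi\equiv1$ on $[\delta,1]$, $\supp\phi\subset[\delta'',1]$, $\psi\equiv1$ on a neighbourhood of $\supp\phi$, $\supp\psi\subset[\delta'',1]$ and $\supp\phi\cap\supp(d\psi)=\emptyset$, so that $\psi\phi=\phi$. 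Write $H_1:=H_1(\theta_0,\theta_1)$, $H_2:=H_2(\tilde\theta_0,\theta_1)$, and let $\chi$ be multiplication by the characteristic function of $[\delta,1]$; since $\chi=\chi\psi=\phi\chi$ we get
\[
 \chi\bigl((H_1+z)^{-1}-(H_2+z)^{-1}\bigr)\chi=\chi\,R(z)\,\chi,\qquad R(z):=\psi\bigl((H_1+z)^{-1}-(H_2+z)^{-1}\bigr)\phi,
\]
and, restriction to a subspace not increasing the trace norm, it suffices to bound $\|R(z)\|_{\tr}$ for $z\ge z_0$.

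Next comes the commutator identity. Since $\psi$ is supported away from $0$ and equals $1$ near $1$, multiplication by $\psi$ maps $\dom(H_j)$ into functions satisfying $B_{1,\theta_1}$ on which the common (on $\supp\psi$) differential expression acts; hence $R(z)$ maps $L^2[0,1]$ into $\dom(H_1)$ and, arguing symmetrically with $R(z)^*$, one may also apply $(H_2+z)$ on the right. As $[H_j,\psi]=[-\partial_x^2,\psi]$ and $[H_j,\phi]=[-\partial_x^2,\phi]$ are first order operators with coefficients compactly supported in $(0,1)$, the manipulation of Proposition~\ref{p:Comparison} gives
\[
 (H_1+z)R(z)(H_2+z)=[-\partial_x^2,\psi]\bigl((H_1+z)^{-1}-(H_2+z)^{-1}\bigr)[\partial_x^2,\phi],
\]
and therefore
\[
 R(z)=(H_1+z)^{-1}[-\partial_x^2,\psi]\bigl((H_1+z)^{-1}-(H_2+z)^{-1}\bigr)[\partial_x^2,\phi](H_2+z)^{-1}.
\]
The determinant-class hypothesis $V_j\in\sVdet$ now finishes the estimate, exactly as in Proposition~\ref{p:Comparison}: by \eqref{eq:DetClass2} one has $\|(H_1+z)^{-1}\|_{\tr}=O(|z|^{-1/2})$, and for $f\in\{\phi,\psi\}$ the operator $[\partial_x^2,f]$ maps $H^1[0,1]$ continuously into $L^2_\comp(0,1)$, so choosing $\chi_0\in\cinfz{(0,1)}$ equal to $1$ near $\supp([\partial_x^2,f])$ and invoking \eqref{eq:DetClass3} yields $\|[\partial_x^2,f](H_j+z)^{-1}\|\le\|[\partial_x^2,f]\|_{H^1\to L^2}\,\|\chi_0(H_j+z)^{-1}\|_{L^2\to H^1}=O(|z|^{-1/2})$ for $j=1,2$. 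Grouping the three factors and estimating the first in the trace norm and the remaining two in the operator norm, one gets $\|R(z)\|_{\tr}=O(|z|^{-1/2})\cdot O(|z|^{-1/2})\cdot O(|z|^{-1/2})=O(|z|^{-3/2})$, which is the assertion.

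The step I expect to cost the most care — the only real difference from Proposition~\ref{p:Comparison} — is checking that multiplication by $\psi$ really carries $\dom(H_2)$ into $\dom(H_1)$. For $u\in\dom(H_2)$ the function $\psi u$ is supported in $[\delta'',1]$, so the singular boundary condition at $0$ is satisfied automatically and $B_{1,\theta_1}$ is preserved; what remains is $H_1(\psi u)\in L^2[0,1]$. A direct computation, using the equation satisfied by $u$, shows that $H_1(\psi u)$ differs from $-\psi''u-2\psi'u'+\psi\,H_2u$ — a function manifestly in $L^2$ — only by the term $\psi X^{-1}(V_1-V_2)u$; since $\sV_\nu$-potentials are controlled only in $L^1$ near $x=1$, this term need not lie in $L^2$ unless $V_1$ and $V_2$ coincide on $[\delta'',1]$. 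This is automatic when $V_1=V_2$ (the case in which one uses the proposition to change only the boundary condition at $0$), and is otherwise the hypothesis one has to carry along. With the two differential expressions matched on $[\delta'',1]$, the argument above goes through unchanged.
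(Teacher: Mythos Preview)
Your localization argument at $x=1$ is precisely the tool the paper uses, but only as one step, not directly between $H_1$ and $H_2$. You have put your finger on the obstruction yourself: without the extra hypothesis that $V_1=V_2$ on a neighbourhood of $1$, the term $\psi X^{-1}(V_1-V_2)u$ need not lie in $L^2$, so $\psi\,\dom(H_2)\not\subset\dom(H_1)$ and, more to the point, the commutator identity you wrote down acquires the residual term $-\psi X^{-1}(V_1-V_2)(H_2+z)^{-1}\phi$ on the right which you have no way to control. Since the proposition is stated for \emph{arbitrary} $V_1,V_2\in\sVdet$, ``carrying the hypothesis along'' does not prove the statement as written.

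The paper bridges the gap by passing through the free Laplacian $\Delta=-\partial_x^2$ with boundary conditions $(\tilde\theta_0,\theta_1)$. One sets $H_3:=\Delta+q$ with $q:=\bigl(\tfrac{\nu^2-1/4}{X^2}+X^{-1}V_1\bigr)\,1_{[\delta/2,1]}\in L^2_\comp(0,1]$; then $H_1$ and $H_3$ share their potential on $[\delta/2,1]$ and the boundary condition at $1$, so your cut-off argument now applies verbatim and yields the $O(|z|^{-3/2})$ bound on $L^2[\delta,1]$ for $(H_1(\theta_0,\theta_1)+z)^{-1}-(H_3(\tilde\theta_0,\theta_1)+z)^{-1}$. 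For the second leg the explicit resolvent kernel of $\Delta(\tilde\theta_0,\theta_1)$ gives $\|q(\Delta(\tilde\theta_0,\theta_1)+z)^{-1}\|_{\tr}=O(|z|^{-1/2})$, and a Neumann series then yields $\|(H_3(\tilde\theta_0,\theta_1)+z)^{-1}-(\Delta(\tilde\theta_0,\theta_1)+z)^{-1}\|_{\tr}=O(|z|^{-3/2})$. This compares $H_1(\theta_0,\theta_1)$ with $\Delta(\tilde\theta_0,\theta_1)$ on $L^2[\delta,1]$; repeating the two steps with $V_2$ in place of $V_1$ compares $H_2(\tilde\theta_0,\theta_1)$ with the \emph{same} $\Delta(\tilde\theta_0,\theta_1)$, and the triangle inequality finishes.

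It is true that in the sole application (the proof of Theorem~\ref{t:TraceAsymptotics}) the factorized operator produced by Proposition~\ref{p:FactorH} actually shares its potential with $H$ near \emph{both} endpoints, so your restricted argument would cover that use; but the proposition is stated, and proved in the paper, without that restriction.
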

\begin{proof}
Fix $\delta>0$ and put
\begin{equation}\label{eq:ML100303-2}
  H_3:= -\frac{d^2}{dx^2}+\Bigl(\frac{\nu^2-1/4}{X^2}+\frac{1}{X}V_1\Bigr) \;1_{[\delta/2,1]}
     =: \Delta+q,
\end{equation}
with $q\in L^2_\comp(0,1], \Delta:=-\frac{d^2}{dx^2}$. 

Exactly as in Proposition \ref{p:Comparison} one now shows
\begin{equation}\label{eq:ML100303-3}
\bigl\|\bigl((H_1(\theta_0,\theta_1)+z)\ii-(H_3(\tilde\theta_0,\theta_1)+z)\ii \bigr)\bigr|_{L^2[\delta,1]}\bigr\|_{\tr}=O(|z|^{-3/2}),
\end{equation}
for $z\ge z_0,  z\in \R_+.$ Furthermore, an elementary calculation involving the explicitly computable resolvent kernel of
$\Delta(\tilde \theta_0,\theta_1)$ shows $\| q(\Delta(\tilde\theta_0,\theta_1)+z)\ii\|_{\tr}=O(|z|^{-1/2})$. 
A Neumann series argument then gives
\begin{equation}\label{eq:ML100303-4}
 \bigl\|\bigl((H_3(\tilde\theta_0,\theta_1)+z)\ii-(\Delta(\tilde\theta_0,\theta_1)+z)\ii \bigr)\bigr|_{L^2[\delta,1]}\bigr\|_{\tr}=O(|z|^{-3/2}).
\end{equation}
\eqref{eq:ML100303-3} and \eqref{eq:ML100303-4} imply
for $z\ge z_0,  z\in \R_+$,
\begin{equation}\label{eq:ML100303-5}
 \bigl\|\bigl((H_1(\theta_0,\theta_1)+z)\ii-(\Delta(\tilde\theta_0,\theta_1)+z)\ii \bigr)\bigr|_{L^2[\delta,1]}\bigr\|_{\tr}=O(|z|^{-3/2}).
\end{equation}
The same line of reasoning applies to $H_2$ and hence
\eqref{eq:ML100303-5} also holds with $H_2(\tilde\theta_0,\theta_1)$ instead of $H_1(\theta_0,\theta_1)$, whence the result.
\end{proof}

\begin{theorem}\label{t:TraceAsymptotics}
Let $V\in\sVdet, \nu>0,$ and let $H=l_\nu+X\ii V$ be the corresponding regular singular Sturm-Liouville operator. 
Let $0\le \theta_j<\pi$ ($\theta_0=0$ if $\nu\ge 1$). 

Then the resolvent of  $H(\theta_0,\theta_1)$ is trace class. Moreover,  there
is a $z_0\ge 0$ such that $H(\theta_0,\theta_1) +z$ is invertible for $z\ge z_0$ and
\[\Tr\bigl(H(\theta_0,\theta_1)+z\bigr)^{-1}=\frac{a}{\sqrt{z}}+\frac{b}{z}+R_3(z),\quad z\ge z_0,\]
where  $a= \frac 12,$
\begin{equation}
\begin{split}
     b&=-\frac 12\bigl(\mu(B_{0,\theta_0})+\mu(B_{1,\theta_1})\bigr)=
                      \begin{cases}
                        -\frac 12 \nu-\frac 14, & \textup{if }\theta_0=\theta_1=0,\\
                        -\frac 12 \nu+\frac 14, & \textup{if }\theta_0=0, 0<\theta_1<\pi,\\ 
                         \frac 12 \nu-\frac 14, & \textup{if }0<\theta_0<\pi, \theta_1=0,\\
                         \frac 12 \nu+\frac 14, & \textup{if }0<\theta_0, \theta_1<\pi,
                      \end{cases}
\end{split}
\end{equation}
(cf. \eqref{eq:DefMu1}) is independent of $V$, and the remainder $R_3(z)$ satisfies 
\begin{align}
    &\lim_{z\to\infty} z R_3(z)=0,  \quad z\in \R_+,                                       \label{eq:RemainderEstimateGeneralA}\\
    &\int_{z_0}^\infty |R_3(z)|dx <\infty, \quad z_0>\max
    \spec\bigl(-H(\theta_0,\theta_1)\bigr)\cap \R.   \label{eq:RemainderEstimateGeneralB}
\end{align}
In particular the zeta-determinant of $H(\theta_0,\theta_1)$ is well--defined by the formula 
\eqref{eq:ZetaPrimeZero},
\begin{align}
\log \det\nolimits_{\zeta} L= -\regint_\Gamma \Tr(H(0,\theta)+z)\ii) dz.
\end{align}
\end{theorem}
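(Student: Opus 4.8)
The plan is to reduce everything to Theorem~\ref{5-b-t}, treating the two possibilities for the boundary condition at $0$ separately. To begin with, trace--classness of the resolvent of $H(\theta_0,\theta_1)$ and invertibility of $H(\theta_0,\theta_1)+z$ for $z\ge z_0$ are immediate from the hypothesis $V\in\sVdet$: the defining estimates \eqref{eq:DetClass1}, \eqref{eq:DetClass2} assert exactly these two facts. So only the trace expansion needs to be proved. If $\theta_0=0$ there is nothing more to do: $\sVdet\subset\sV_\nu$, hence $H(0,\theta_1)$ is precisely the m--sectorial operator of Theorem~\ref{5-b-t}, which already supplies the expansion with $a=\frac12$ and $b=-\frac12\bigl(\nu+\mu_1(B_{1,\theta_1})\bigr)$ and a remainder satisfying \eqref{eq:RemainderEstimateGeneralA}, \eqref{eq:RemainderEstimateGeneralB}; by \eqref{eq:DefMu1} this gives the first two lines of the asserted value of $b$.

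Assume now $0<\theta_0<\pi$, so necessarily $0<\nu<1$. Here I would exploit the factorization. By Proposition~\ref{p:FactorH} --- choosing $\vartheta_1=\theta_1$ if $\theta_1>0$, and any $\vartheta_1\in(0,\pi)$ if $\theta_1=0$ --- there is a $\go(x)=x^{-\nu+1/2}\tilde\go$ with $\tilde\go\in\sA$ nowhere vanishing and $H\go=0$ near $0$ and near $1$, and closed extensions $D_1,D_2$ of the operators $d_1,d_2$ in \eqref{eq:d1d2} such that $H(\theta_0,\theta_1)=D_1D_2+W$ with $W\in L^2_\comp(0,1)$. The decisive feature of this choice is that, by the case analysis of Section~\ref{p:Cases}, the flipped product $D_2D_1$ equals $H_{21}(0,0)$ (Case~I, when $\theta_1>0$) resp.\ $H_{21}(0,\pi-\vartheta_1)$ (Case~II, when $\theta_1=0$): it carries the Dirichlet (Friedrichs) condition at $0$, and $H_{21}=l_{1-\nu}+X^{-1}V_{21}$ with $V_{21}\in\sV_\nu=\sV_{1-\nu}$ (these spaces coincide since $\nu,1-\nu\neq0$; the membership was noted after \eqref{eq:H21}). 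Since $0<1-\nu<1$, Theorem~\ref{5-b-t} applies to $H_{21}(0,\cdot)$ and yields
\[
  \Tr(D_2D_1+z)^{-1}=\frac{1/2}{\sqrt z}+\frac{b'}{z}+R'(z),\qquad
  b'=-\frac12\bigl((1-\nu)+\mu_1\bigr),
\]
with $\mu_1=\frac12$ in Case~I and $\mu_1=-\frac12$ in Case~II, the remainder $R'$ obeying \eqref{eq:RemainderEstimateGeneralA}, \eqref{eq:RemainderEstimateGeneralB}.

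It remains to transfer this to $D_1D_2$ and then to reinstate $W$. The non--zero eigenvalues of $D_1D_2$ and of $D_2D_1$ coincide with equal algebraic multiplicities; and by Proposition~\ref{p:CommutatorWronskian}, in Case~I the operator $D_2D_1$ is invertible while $\ker D_1D_2=\C\,\go$ is one--dimensional --- and $0$ is in fact a \emph{semisimple} eigenvalue of $D_1D_2$, since $(D_1D_2)^2 f=0$ gives $(D_2D_1)(D_2 f)=D_2(D_1D_2 f)=0$ (because $D_1D_2 f\in\C\go$ and $d_2\go=0$), hence $D_2 f=0$ by invertibility of $D_2D_1$, hence $D_1D_2 f=0$ --- whereas in Case~II both operators are invertible with identical spectra. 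Writing the resolvent trace as the sum of $(\lambda+z)^{-1}$ over eigenvalues counted with algebraic multiplicity, we obtain
\[
  \Tr(D_1D_2+z)^{-1}=\Tr(D_2D_1+z)^{-1}+\frac{d}{z},\qquad
  d=\dim\ker D_1D_2-\dim\ker D_2D_1\in\{1,0\}.
\]
Adding $d/z$ to $b'/z$ turns $b'$ into $\frac\nu2+\frac14$ in Case~I and leaves it equal to $\frac\nu2-\frac14$ in Case~II, exactly the third and fourth lines of the claimed value of $b$. Finally, $H\go=0$ near $0$ forces the potential $V_{12}$ of $D_1D_2=H_{12}(\vartheta_0,\cdot)$ to coincide with $V$ near $0$, so $V_{12}\in\sVdet$ by Lemma~\ref{l:DetClassCriterion}; hence \eqref{eq:ResolventComparison} gives
\[
  \bigl\|(H(\theta_0,\theta_1)+z)^{-1}-(D_1D_2+z)^{-1}\bigr\|_{\tr}\le C\,\|W\|_{L^2}\,|z|^{-7/6},\qquad z\ge z_0,
\]
a remainder which again satisfies \eqref{eq:RemainderEstimateGeneralA}, \eqref{eq:RemainderEstimateGeneralB}. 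Combining the last three displays gives the expansion for $\Tr(H(\theta_0,\theta_1)+z)^{-1}$ with the stated $a,b$ and a remainder $R_3$ of the required type; the concluding assertion that $\detz H(\theta_0,\theta_1)$ is well defined then follows from Section~\ref{ss:ZetaDeterminant}. I expect the main obstacle to lie in the bookkeeping of the constant $b$ across the factorization --- the substitution $\nu\mapsto 1-\nu$ in the model parameter of $D_2D_1$, the change of the boundary--condition invariant $\mu_1$ at the right endpoint, and the additive contribution $d$ of the one--dimensional kernel that appears when passing from $D_1D_2$ to $D_2D_1$ --- together with verifying that this kernel is semisimple, so that "algebraic multiplicity'' may legitimately be replaced by $\dim\ker$ in the eigenvalue expansion of the resolvent trace.
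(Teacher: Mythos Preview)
Your proof is correct and follows essentially the same architecture as the paper: reduce to Theorem~\ref{5-b-t} directly when $\theta_0=0$, and when $0<\theta_0<\pi$ factorize via Proposition~\ref{p:FactorH}, apply Theorem~\ref{5-b-t} to $D_2D_1$ (which carries the Friedrichs condition at $0$), and track the constant $b$ through the substitution $\nu\mapsto 1-\nu$ and the kernel contribution $d/z$.

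The one genuine difference is in the comparison step between $H(\theta_0,\theta_1)$ and $D_1D_2$. The paper invokes the localization Propositions~\ref{p:Comparison} and~\ref{p:Comparison2}, which together give a global trace--norm bound $O(|z|^{-3/2})$ by splitting $[0,1]$ into a neighborhood of $0$ and its complement. You instead observe that $V_{12}\in\sVdet$ (since it agrees with $V$ near $0$, Lemma~\ref{l:DetClassCriterion}) and then apply \eqref{eq:ResolventComparison} directly to the perturbation $W\in L^2_\comp(0,1)$, obtaining $O(|z|^{-7/6})$. This is weaker but entirely sufficient for \eqref{eq:RemainderEstimateGeneralA}, \eqref{eq:RemainderEstimateGeneralB}, and it is shorter: it avoids the cutoff--commutator machinery of Propositions~\ref{p:Comparison}, \ref{p:Comparison2} altogether. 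You also make explicit the semisimplicity of the zero eigenvalue of $D_1D_2$ in Case~I, a point the paper uses implicitly when writing $\Tr(D_1D_2+z)^{-1}=\Tr(D_2D_1+z)^{-1}+z^{-1}$.
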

\begin{proof}
By Proposition \ref{p:FactorH} we may choose a factorizable operator 
$H_1(\theta_0,\theta_1)=D_1D_2$ such that there is a $\delta>0$ 
such that the coefficients of $H_1$ and $H$ coincide on the interval
$[0,\delta]$. Here, $D_1,D_2$ are appropriate closed extensions of
the operators $d_1, d_2$ in \eqref{eq:d1d2} with $\mu=-\nu$.
Then by Propositions \ref{p:Comparison}, \ref{p:Comparison2} we find
\begin{equation}\label{eq:ML100303-6}
   \bigl\|\bigl((H(\theta_0,\theta_1)+z)\ii-(D_1D_2+z)\ii \bigr)\bigr\|_{\tr}=O(|z|^{-3/2}),
\quad z\ge z_0,  z\in \R_+,
\end{equation}
and hence
\begin{equation}
  \Tr\bigl( H(\T_0,\T_1)+z\bigr)\ii =\Tr\bigl( D_1D_2+z\bigr)\ii+O(|z|^{-3/2}).
\end{equation}
We now have to discuss the four possible cases listed in Section \plref{p:Cases}, see also
\eqref{eq:H12}, \eqref{eq:H21}:

\subsubsection*{Case I: $D_1=d_{1,rr}, D_2=d_{2,aa}$} Then $D_1D_2$ has a one-dimensional null space
and $D_2D_1$ is invertible. Applying Theorem \ref{5-b-t} to $D_2D_1$ we obtain 
\begin{equation}
    \Tr\bigl(D_1D_2+z\bigr)\ii = \Tr\bigl(D_2D_1+z\bigr)\ii + z\ii\\
                               = \frac{a}{\sqrt{z}}+\frac{b}{z}+R_3(z),
\end{equation}
where $R_3(z)$ has the claimed properties \eqref{eq:RemainderEstimateGeneralA} and \eqref{eq:RemainderEstimateGeneralB}
and 
$a=1/2, b=-1/2(1-\nu+ 1/2-2)= 1/2(\nu+ 1/2)$.
Note that in formulas involving $D_2D_1$, according to \eqref{eq:H21}, the $\nu$ has to be replaced by $1-\nu$. 

\subsubsection*{Case II: $D_1=d_{1,ra}, D_2=d_{2,ar}$} Then $D_1D_2$ and $D_2D_1$ are both invertible and
hence $\Tr(D_1D_2+z)\ii=\Tr(D_2D_1+z)\ii$, and we can proceed as in Case I.

In the remaining cases III ($D_1=d_{1,ar}, D_2=d_{2,ra}$) and IV ($D_1=d_{1,aa}, D_2=d_{2,rr}$) one can apply
Theorem \ref{5-b-t} directly to $D_1D_2$.
\end{proof}

\section{Variation of the regular singular potential}\label{q-section}
\label{sec:VarPot}

In this section we discuss the behavior of the fundamental system of solutions under 
a certain variation of the potential and derive a variational formula for the zeta-determinant.

\subsection*{Standing assumptions} 
Let $\nu\ge 0, V\in\sVdet$ and let $W_\eta\in L^\infty[0,1]+L^2_\comp(0,1]$
be a family of functions depending on a real or complex parameter $\eta$.
To avoid unnecessary technicalities we assume that $W_\eta$ is of the form
$W_\eta=W_{1,\eta}+W_{2,\eta}$ where  $W_{1,\eta}\in L^\infty[0,1], W_{2,\eta}\in L^2_\comp(0,1]$ satisfy
\begin{enumerate}
\item $\eta\mapsto W_{1,\eta}$ is differentiable as a map into the Banach space $L^\infty[0,1]$,
\item there is a fixed $\delta>0$ such that $\supp W_{2,\eta}\subset [\delta,1]$ and $\eta\mapsto {W_{2,\eta}}\bigl|_{[\delta,1]}$
is differentiable as a map into the Banach space $L^2[\delta,1]$.
\end{enumerate}
For notational convenience we assume $W_0=0$ and put $V_\eta:=V+X W_\eta$ and
\begin{equation}
    H_\eta:=l_\nu +X^{-1} V_\eta = l_\nu+X\ii(V+X W_\eta )=: H_0+W_\eta=: -\frac{d^2}{dx^2}+q_\eta.
\end{equation}
$\eta_0=0$ serves as a base point for a perturbative construction of a fundamental system.

\subsection{Fundamental solutions and their asymptotics}
According to Theorem \plref{5-a-t} let $g_{1,\eta}$ be the unique solution of the 
ODE $H_\eta g_{1,\eta}=0$ with $g_{1,\eta}(x)\sim x^{\nu_1}$, as $x\to 0+$. Note that
the second solution $g_{2,\eta}$ in Theorem \plref{5-a-t} is not uniquely determined
by the requirement $g_{2,\eta}(x)=-\frac{1}{2\nu} x^{\nu_2}$, cf. Remark \plref{rem:ML20100430}. 
Since the solutions now depend on
the parameter $\eta$, the choice of $g_{2,\eta}$ becomes important. Before we specify
$g_{2,\eta}$ we discuss the dependence of $g_{1,\eta}$ on $\eta$. To do so recall
the operator $K_\nu$ from \eqref{eq:VolterraOperatorA} in Section \ref{sec:FunSys}.
For $\ga\ge 0$ consider the Banach space $X^\ga C[0,1]$ with norm
\begin{equation}
      \|f\|_\ga:=\sup_{0\le x\le 1} | x^{-\ga} f(x)|,
\end{equation}
and the Banach space $C^1_\ga[0,1]$ consisting of those functions in $f\in
\bigl(X^\ga C[0,1]\bigr)\cap C^1(0,1]$
with $f'\in X^{\ga-1} C[0,1]$ and norm
\begin{equation}
     \|f\|_{C^1_\ga}:= \|f\|_\ga + \|f'\|_{\ga-1}.
\end{equation}
For $f\in X^\ga C[0,1]$ the inequality \eqref{14-a} gives
\begin{equation}
   |(K_\nu V)^n f(x)| \leq  x^\ga \frac{1}{|\nu|^nn!} \|f\|_{\ga} \Bigl(\int_0^x |V(y)|dy\Bigr)^n, \ \nu\neq 0, \\
\end{equation}
thus $K_\nu V$ is a bounded operator on $X^\ga C[0,1]$ with spectral radius zero. Furthermore, for $f\in X^\ga C[0,1]$
(cf. \eqref{eq:ML1003055}) 
\begin{equation}
    | (K_\nu V f)'(x)|\le x^{\ga -1} \|f\|_\ga \int_0^x |V(y)| dy,
\end{equation}
hence $K_\nu V$ maps $X^\ga C[0,1]$ continuously into $C^1_\ga[0,1]$. 

Recall from \eqref{24-a} that $g_{1,\eta}(x)=x^{\nu_1}(1+\phi_\eta(x))$ with
\begin{equation}
    \phi_\eta=(I-K_\nu V_\eta)\ii K_\nu V_\eta \one.
\end{equation}
Consequently $\phi_\eta$ is differentiable in $\eta$ and
\begin{equation}
\begin{split}
    \pl_\eta \phi_\eta =(I-&K_\nu V_\eta)\ii K_\nu (X \pl_\eta W_\eta)\one \\
                           &+(I-K_\nu V_\eta)\ii K_\nu (X \pl_\eta W_\eta) (I-K_\nu V_\eta)\ii K_\nu V_\eta \one.
\end{split}
\end{equation}
Since $\pl_\eta W_\eta$ is bounded near $0$, 
the operator $K_\nu (X \pl_\eta W_\eta)$ maps $C^1_\ga[0,1]$ continuously into $C^1_{\ga+2}[0,1]$ and hence we have proved

\begin{lemma}\label{l:ML1003061} Under the assumptions stated at the beginning
 of this section,
$g_{1,\eta}$ is differentiable in $\eta$ with
$\pl_\eta g_{1,\eta}(x)=O(x^{\nu_1+2}), \pl_\eta g_{1,\eta}'(x)=O(x^{\nu_1+1})$ as $x\to 0+$.
Moreover, the $O$--constants are locally uniform in $\eta$ and hence
$g_{1,\eta}(x)-g_{1,\eta_0}(x)=O(x^{\nu_1+2})$.
\end{lemma}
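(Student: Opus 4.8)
The plan is to deduce the Lemma entirely from the representation $g_{1,\eta}(x)=x^{\nu_1}\bigl(1+\phi_\eta(x)\bigr)$, $\phi_\eta=(I-K_\nu V_\eta)\ii K_\nu V_\eta\one$, together with the formula for $\pl_\eta\phi_\eta$ already displayed above (we treat $\nu>0$; the case $\nu=0$ is the same with \eqref{18-a} in place of \eqref{14-a}). First I would make the differentiability of $\eta\mapsto\phi_\eta$ precise: since $V_\eta=V+X W_\eta$ depends affinely on $W_\eta$ and $\eta\mapsto W_\eta$ is differentiable into $L^\infty[0,1]+L^2_\comp(0,1]$, the family $\eta\mapsto K_\nu V_\eta$ is differentiable as a map into the bounded operators on each of the spaces $X^\ga C[0,1]$ and $C^1_\ga[0,1]$, with derivative $K_\nu(X\pl_\eta W_\eta)$. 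By the estimate \eqref{14-a} and the accompanying bound for $(K_\nu V_\eta f)'$ (which hold verbatim with $V$ replaced by $V_\eta$), every $K_\nu V_\eta$ has spectral radius zero on those spaces, so $(I-K_\nu V_\eta)\ii=\sum_n(K_\nu V_\eta)^n$ converges, is bounded, and depends differentiably on $\eta$ — all of this locally uniformly in $\eta$. Combining these facts, $\phi_\eta$ is differentiable and $\pl_\eta\phi_\eta=(I-K_\nu V_\eta)\ii K_\nu(X\pl_\eta W_\eta)(\one+\phi_\eta)$, which is the displayed formula after inserting $(I-K_\nu V_\eta)\ii K_\nu V_\eta\one=\phi_\eta$.

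The decay is then a matter of counting powers of $x$. Near $0$ one has $W_\eta=W_{1,\eta}\in L^\infty$, hence $X\,\pl_\eta W_\eta=O(x)$, and feeding this into the pointwise estimates for $K_\nu(X\rho)$ (the analogues of \eqref{14-a} and of the derivative bound preceding the Lemma, with $V$ replaced by $X\,\pl_\eta W_\eta$) gives the gain-of-two already noted in the text: $K_\nu(X\pl_\eta W_\eta)$ maps $X^\ga C[0,1]$ continuously into $C^1_{\ga+2}[0,1]$, locally uniformly in $\eta$. Since $\one+\phi_\eta=\tilde g_{1,\eta}\in X^0C[0,1]$ with norm locally bounded in $\eta$, the function $K_\nu(X\pl_\eta W_\eta)(\one+\phi_\eta)$ lies in $C^1_2[0,1]$; applying $(I-K_\nu V_\eta)\ii$, which is bounded on $C^1_2[0,1]$ by the same spectral-radius argument and again locally uniformly in $\eta$, keeps it in $C^1_2[0,1]$. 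Hence $\pl_\eta\phi_\eta(x)=O(x^2)$ and $\pl_\eta\phi_\eta'(x)=O(x)$ as $x\to 0+$, locally uniformly in $\eta$, and differentiating $g_{1,\eta}=x^{\nu_1}(1+\phi_\eta)$ gives $\pl_\eta g_{1,\eta}(x)=x^{\nu_1}\pl_\eta\phi_\eta(x)=O(x^{\nu_1+2})$ and $\pl_\eta g_{1,\eta}'(x)=\nu_1x^{\nu_1-1}\pl_\eta\phi_\eta(x)+x^{\nu_1}\pl_\eta\phi_\eta'(x)=O(x^{\nu_1+1})$, with constants locally uniform in $\eta$.

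For the last assertion I would integrate the derivative just estimated: $g_{1,\eta}(x)-g_{1,\eta_0}(x)=\int_{\eta_0}^\eta\pl_s g_{1,s}(x)\,ds$ (along a path from $\eta_0$ to $\eta$ if the parameter is complex), and the bound $\pl_s g_{1,s}(x)=O(x^{\nu_1+2})$, uniform for $s$ in a fixed neighborhood of that path, yields $g_{1,\eta}(x)-g_{1,\eta_0}(x)=O(x^{\nu_1+2})$. The step I expect to require genuine care, rather than routine estimation, is the locally uniform control in $\eta$: one has to verify that the spectral-radius-zero bounds for $K_\nu V_\eta$ and the gain-of-two bound for $K_\nu(X\pl_\eta W_\eta)$ depend on $\eta$ only through quantities such as $\int_0^1|V_\eta|$, $\|\pl_\eta W_{1,\eta}\|_\infty$ and $\|\pl_\eta W_{2,\eta}\|_{L^2[\delta,1]}$, which are locally bounded by the standing differentiability hypotheses on $W_{1,\eta}$ and $W_{2,\eta}$ — and it is precisely here that those hypotheses are used.
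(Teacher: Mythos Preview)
Your proposal is correct and follows essentially the same approach as the paper: both arguments use the resolvent formula $\phi_\eta=(I-K_\nu V_\eta)^{-1}K_\nu V_\eta\one$ on the weighted spaces $X^\ga C[0,1]$ and $C^1_\ga[0,1]$, differentiate in $\eta$, and exploit the gain-of-two mapping property of $K_\nu(X\,\partial_\eta W_\eta)$ together with the spectral-radius-zero bound for $K_\nu V_\eta$ to place $\partial_\eta\phi_\eta$ in $C^1_2[0,1]$. Your write-up is in fact somewhat more explicit than the paper's (which simply states the mapping property and declares the lemma proved), particularly in spelling out the differentiability of $\eta\mapsto(I-K_\nu V_\eta)^{-1}$, the integration step for the final assertion, and the dependence of the constants on $\eta$ only through the quantities $\int_0^1|V_\eta|$, $\|\partial_\eta W_{1,\eta}\|_\infty$, $\|\partial_\eta W_{2,\eta}\|_{L^2[\delta,1]}$.
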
 
After these preparations we can discuss the second fundamental solution 
$g_{2,\eta}$. For $\eta$ in a neighborhood of $0$ we can fix $x_0\in (0,1)$ such that
$g_{1,\eta}(x)\not=0$ for $0\le x\le x_0$. For these $x$ we note
\begin{equation}\label{18-c}
\begin{split}
g_{1,\eta}(x)^{-2}&-g_{1,0}(x)^{-2}\\
    &=\frac{[g_{1,\eta}(x)+g_{1,0}(x)][g_{1,0}(x)-g_{1,\eta}(x)]}{(g_{1,\eta}(x))^2 (g_{1,0}(x))^2}=O(x^{2-2\nu_1}), \quad x\to 0+\, ,
\end{split}
\end{equation}
where the $O$--constant is independent of $\eta$.
Hence $g_{1,\eta}^{-2}-g_{1,0}^{-2}$ is integrable over $(0,x_0]$ and we put for $x\in (0,x_0)$
\begin{align}
g_{2,0}(x)&= g_{1,0}(x) \int_x^{x_0} g_{1,0}(y)^{-2} dy,\label{21-c}\\
g_{2,\eta}(x)&=-g_{1,\eta}(x)\int_0^x [g_{1,\eta}(y)^{-2}-g_{1,0}(y)^{-2}]dy+\frac{g_{1,\eta}(x)}{g_{1,0}(x)}g_{2,0}(x).\label{eq:ML1003221}
\end{align}
From \eqref{18-c}, \eqref{21-c} and \eqref{eq:ML1003221} we immediately get
\begin{lemma}\label{2-c-t} $g_{1,\eta},g_{2,\eta}$ is a fundamental system of
 solutions for the \ODE\ $H_\eta g=0$
satisfying \eqref{40}, \eqref{40-a}. 
Moreover, $g_{2,\eta}$ is also differentiable in $\eta$ and we have
for $\nu>0$
\begin{align}
g_{2,\eta}(x)&=g_{2,0}(x)+ O(x^{\nu_2+2}), \\
\partial_{\eta}g_{2,\eta}(x)&=O(x^{\nu_2+2}), \quad
\partial_{\eta}g_{2,\eta}'(x)=O(x^{\nu_2+1}),
\end{align}
as $x\to 0$. For $\nu=0$ the estimates are 
$O(x^{5/2-\nu}\log x)=O(x^{5/2}\log x),$  $O(x^{\nu_2+2}\log x)=O(x^{5/2}\log
x),$ $O(x^{\nu_2+1}\log x)=O(x^{3/2}\log x)$, respectively.
\end{lemma}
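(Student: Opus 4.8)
The plan is to read everything off the explicit formulas \eqref{21-c}, \eqref{eq:ML1003221}, the comparison estimate \eqref{18-c}, and the $\eta$--differentiability of $g_{1,\eta}$ from Lemma \plref{l:ML1003061}; no genuinely new analytic input is needed, the formula \eqref{eq:ML1003221} having been arranged precisely so that the $\eta$--dependence is concentrated in $g_{1,\eta}$ and in integrals that converge at $x=0$. First I would check that $g_{2,\eta}$ is a second solution of the required shape. Recall that $g_{1,\eta}(x)=x^{\nu_1}(1+\phi_\eta(x))=x^{\nu_1}\tilde g_{1,\eta}(x)$ with $\tilde g_{1,\eta}\in C[0,1]$, $\tilde g_{1,\eta}(0)=1$, so \eqref{40} holds. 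Substituting \eqref{21-c} into \eqref{eq:ML1003221}, rewriting $\tfrac{g_{1,\eta}}{g_{1,0}}g_{2,0}=g_{1,\eta}\int_x^{x_0}g_{1,0}(y)^{-2}dy$ and using $\int_x^{x_0}g_{1,\eta}^{-2}=\int_x^{x_0}g_{1,0}^{-2}+\int_x^{x_0}[g_{1,\eta}^{-2}-g_{1,0}^{-2}]$, one collects the integrals (convergent by \eqref{18-c}) into
\[
   g_{2,\eta}(x)=g_{1,\eta}(x)\Bigl(\int_x^{x_0}g_{1,\eta}(y)^{-2}\,dy+\lambda_\eta\Bigr),\qquad
   \lambda_\eta:=-\int_0^{x_0}\bigl[g_{1,\eta}(y)^{-2}-g_{1,0}(y)^{-2}\bigr]\,dy,
\]
for $0<x<x_0$, with $\lambda_\eta$ a finite constant. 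Since $g_{1,\eta}$ solves $H_\eta g=0$ and is nowhere zero on $(0,x_0]$, reduction of order shows that $g_{1,\eta}(x)\int_x^{x_0}g_{1,\eta}(y)^{-2}dy$ solves $H_\eta g=0$; adding $\lambda_\eta g_{1,\eta}$ leaves it a solution on $(0,x_0)$, which extends uniquely to $(0,1]$. Its behaviour as $x\to0$ is that of $g_{1,\eta}\int_x^{x_0}g_{1,\eta}^{-2}$ ($\lambda_\eta g_{1,\eta}=O(x^{\nu_1})$ being of strictly higher order), which is exactly the reduction-of-order computation in the proof of Theorem \plref{5-a-t}, cf. \eqref{38-a}; so $g_{2,\eta}$ has the form \eqref{40-a} with continuous $\tilde g_{2,\eta}$, $\tilde g_{2,\eta}(0)=1$. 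Because $\nu_1\neq\nu_2$ for $\nu>0$ (resp. $g_{2,0}\sim\sqrt x\log x$ against $g_{1,0}\sim\sqrt x$ for $\nu=0$), $g_{1,\eta},g_{2,\eta}$ is a fundamental system satisfying \eqref{40}, \eqref{40-a}.

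For differentiability and the decay estimates I would keep the splitting $g_{2,\eta}=A_\eta+B_\eta$ of \eqref{eq:ML1003221} as it stands, with $A_\eta(x)=-g_{1,\eta}(x)\int_0^x[g_{1,\eta}(y)^{-2}-g_{1,0}(y)^{-2}]\,dy$ and $B_\eta(x)=\tfrac{g_{1,\eta}(x)}{g_{1,0}(x)}g_{2,0}(x)$. For $B_\eta$: Lemma \plref{l:ML1003061} gives $g_{1,\eta}/g_{1,0}=\tilde g_{1,\eta}/\tilde g_{1,0}\to1$ and $\pl_\eta(g_{1,\eta}/g_{1,0})=\pl_\eta g_{1,\eta}/g_{1,0}=O(x^{2})$ locally uniformly in $\eta$, while $g_{2,0}(x)=O(x^{\nu_2})$ (resp. $O(x^{1/2}|\log x|)$ for $\nu=0$); hence $B_\eta-g_{2,0}=\tfrac{g_{1,\eta}-g_{1,0}}{g_{1,0}}g_{2,0}=O(x^{\nu_2+2})$, $\pl_\eta B_\eta=O(x^{\nu_2+2})$, and one $x$--derivative together with $\pl_\eta g_{1,\eta}'=O(x^{\nu_1+1})$ yields $\pl_\eta B_\eta'=O(x^{\nu_2+1})$. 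For $A_\eta$: \eqref{18-c} gives $g_{1,\eta}^{-2}-g_{1,0}^{-2}=O(x^{2-2\nu_1})$, so $\int_0^x[\,\cdot\,]=O(x^{3-2\nu_1})$ and $A_\eta(x)=O(x^{\nu_1})O(x^{3-2\nu_1})=O(x^{\nu_2+2})$; the $\eta$--derivative of the integrand, $\pl_\eta(g_{1,\eta}^{-2})=-2g_{1,\eta}^{-3}\pl_\eta g_{1,\eta}=O(x^{2-2\nu_1})$ (again Lemma \plref{l:ML1003061}), gives $\pl_\eta A_\eta=O(x^{\nu_2+2})$ and, after one $x$--derivative, $O(x^{\nu_2+1})$. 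Adding the two contributions gives $g_{2,\eta}=g_{2,0}+O(x^{\nu_2+2})$ and the asserted $\pl_\eta$--estimates.

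The only point needing real care is the interchange of $\pl_\eta$ with the integrals near $x=0$ (in $A_\eta$ and in the constant $\lambda_\eta$): this is exactly where the \emph{locally uniform} $O$--constants in Lemma \plref{l:ML1003061} are used, since then $\pl_\eta(g_{1,\eta}(y)^{-2})$ has the $\eta$--independent majorant $C\,y^{2-2\nu_1}$, integrable near $0$ in the relevant range, so dominated convergence legitimizes differentiating under the integral sign; all the rest is bookkeeping of powers of $x$. For $\nu=0$ each $O$--power above acquires a factor $|\log x|$, inherited from $g_{2,0}\sim\sqrt x\log x$, from $g_{1,0}^{-2}\sim x^{-1}$ and from the $\nu=0$ branch of Theorem \plref{5-a-t}; this gives the stated $O(x^{5/2}\log x)$, $O(x^{5/2}\log x)$, $O(x^{3/2}\log x)$, the argument being otherwise identical. (If $\nu\ge1$ one replaces $\int_0^x$ by $\int_{x_1}^x$ plus a constant to keep the integrals finite, but then $g_{2,\eta}\notin L^2$ and only the Friedrichs case is used later, so this is immaterial.)
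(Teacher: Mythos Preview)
Your proof is correct and is exactly the approach the paper intends: the paper's own proof is the single sentence ``From \eqref{18-c}, \eqref{21-c} and \eqref{eq:ML1003221} we immediately get'' Lemma \ref{2-c-t}, and you have simply supplied the bookkeeping behind that sentence, using the splitting $g_{2,\eta}=A_\eta+B_\eta$ built into \eqref{eq:ML1003221} together with Lemma \ref{l:ML1003061}. Two small remarks: your claim that for $\nu=0$ ``each $O$--power acquires a factor $|\log x|$'' is a harmless overstatement (the $A_\eta$ piece does not pick up a logarithm, only $B_\eta$ does via $g_{2,0}$), and your parenthetical observation about $\nu\ge 1$ is well taken---the integral in \eqref{eq:ML1003221} is written assuming integrability, which via \eqref{18-c} needs $\nu<1$, but as you note only the Friedrichs case is used later for $\nu\ge 1$ so this is immaterial.
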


Lemma \plref{l:ML1003061} and \plref{2-c-t} imply: 

\begin{cor}\label{cor:AsympWronskian} For $\nu>0$
we have the following asymptotics for the Wronskians $W(g_{j,\eta},\pl_\eta g_{k,\eta})=
g_{j,\eta}g_{k,\eta}'-g_{j,\eta}' g_{k,\eta}, j,k=1,2,$ as $x\to 0+$:
\begin{align}
W(g_{1,\eta}, \partial_{\eta} g_{1,\eta})(x)&=O(x^{2\nu_1+1}), \label{28-c} \\
W(g_{2,\eta}, \partial_{\eta} g_{1,\eta})(x)&=O(x^{\nu_1+\nu_2+1})=O(x^2), \label{29-c} \\
W(g_{1,\eta}, \partial_{\eta} g_{2,\eta})(x)&=O(x^{\nu_1+\nu_2+1})=O(x^2), \label{30-c} \\
W(g_{2,\eta}, \partial_{\eta} g_{2,\eta})(x)&=O(x^{2\nu_2+1}). \label{31-c} 
\end{align}
If $\nu=0$ then the estimates are $O(x^2\log x)$ in all four cases.

Hence for $\nu\ge 0$ and all $j,k=1,2,$ we have
\[ 
\lim\limits_{x\to 0} W(g_{j,\eta},\pl_\eta g_{k,\eta})(x)=0.
\]
\end{cor}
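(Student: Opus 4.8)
The plan is to reduce everything to multiplying the pointwise asymptotics already recorded in Lemma \plref{l:ML1003061} and Lemma \plref{2-c-t}. First I would note that on $(0,1]$ the parameter derivative $\partial_\eta$ commutes with $\partial_x$, so that in each Wronskian $W(g_{j,\eta},\partial_\eta g_{k,\eta})=g_{j,\eta}\,(\partial_\eta g_{k,\eta})'-g_{j,\eta}'\,(\partial_\eta g_{k,\eta})$ the term $(\partial_\eta g_{k,\eta})'$ equals $\partial_\eta(g_{k,\eta}')$; the needed bounds on this mixed derivative are precisely the ones stated in the two lemmas, namely $\partial_\eta g_{1,\eta}'(x)=O(x^{\nu_1+1})$ and, for $\nu>0$, $\partial_\eta g_{2,\eta}'(x)=O(x^{\nu_2+1})$ (with an additional factor $\log x$ when $\nu=0$), all $O$--constants being locally uniform in $\eta$.

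Next I would simply expand each Wronskian into its two summands and multiply the bounds $g_{1,\eta}(x)=O(x^{\nu_1})$, $g_{1,\eta}'(x)=O(x^{\nu_1-1})$, $g_{2,\eta}(x)=O(x^{\nu_2})$, $g_{2,\eta}'(x)=O(x^{\nu_2-1})$, $\partial_\eta g_{1,\eta}(x)=O(x^{\nu_1+2})$, $\partial_\eta g_{2,\eta}(x)=O(x^{\nu_2+2})$. In every one of the four cases both summands turn out to have the same order: $O(x^{2\nu_1+1})$ for \eqref{28-c}, $O(x^{\nu_1+\nu_2+1})$ for \eqref{29-c} and \eqref{30-c}, and $O(x^{2\nu_2+1})$ for \eqref{31-c}; the only simplification used is $\nu_1+\nu_2=1$ from Theorem \plref{5-a-t}, turning $O(x^{\nu_1+\nu_2+1})$ into $O(x^2)$. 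When $\nu=0$ the second solution carries a logarithm, so the bounds from Lemma \plref{2-c-t} acquire a factor $\log x$, and with $\nu_1=\nu_2=1/2$ the same bookkeeping produces $O(x^2\log x)$ in all four cases, the weaker bound absorbing those summands that are in fact $O(x^2)$.

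For the concluding limit one only has to observe that the exponents are strictly positive: $2\nu_1+1=2\nu+2>0$ in all cases, while $\nu_1+\nu_2+1=2$ and, in the range $0\le\nu<1$ in which $g_{2,\eta}$ actually enters the discussion, $2\nu_2+1=2-2\nu>0$; since also $x^2\log x\to 0$, this yields $W(g_{j,\eta},\partial_\eta g_{k,\eta})(x)\to 0$ as $x\to 0+$ for all $j,k=1,2$. I expect no genuine obstacle here; the only points deserving (routine) attention are the interchange of $\partial_\eta$ and $\partial_x$ and the $\eta$--uniformity of the $O$--estimates, both of which are already contained in Lemmas \plref{l:ML1003061} and \plref{2-c-t}, and the remainder is bookkeeping of exponents.
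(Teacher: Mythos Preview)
Your proposal is correct and is exactly the approach the paper has in mind: the corollary is stated there as an immediate consequence of Lemmas \ref{l:ML1003061} and \ref{2-c-t} with no further argument, and your term-by-term multiplication of the recorded asymptotics is precisely that deduction. One small slip worth noting: for $\nu=0$ in case \eqref{31-c} both $g_{2,\eta}$ and $\partial_\eta g_{2,\eta}$ (and their $x$-derivatives) carry a logarithmic factor, so the straightforward product actually yields $O\bigl(x^2(\log x)^2\bigr)$ rather than $O(x^2\log x)$; this is harmless for the concluding limit, and the paper's own stated bound seems to share the same imprecision.
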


Now we are ready to state the variational result which generalizes  
\cite[Prop. 3.4]{Les:DRS} to arbitrary boundary conditions and to more general potentials:
\begin{theorem}\label{thm:PotentialVariation}
\textup{1. } Let $0<\nu<1$, $V\in\sVdet$ and let $\eta\mapsto W_\eta\in L^\infty[0,1]+L^2_\comp(0,1]$ 
be differentiable 
in the sense described at the beginning of this section. Furthermore, let
$0\le \theta_j<\pi, j=0,1$ and
let $H_\eta=l_\nu+X^{-1} V+W_\eta$. Fix $\eta_0$ and let $g_{j,\eta}$ be the fundamental system
constructed above, relative to the base point $\eta_0$ ($g_{j,\eta_0}$ plays the role of the $g_{j,0}$ above).

Then we have $H_\eta(\theta_0,\theta_1)=H_{\eta_0}(\theta_0,\theta_1)+ W_{\eta}-W_{\eta_0}$. 
Moreover, if $H_{\eta_0}(\theta_0,\theta_1)$ is invertible then 
$\eta\mapsto\log\detz H_{\eta}(\theta_0,\theta_1)$
is differentiable at $\eta_0$ and if $\varphi_\eta,\psi_\eta$ denotes a fundamental system which is normalized for
the boundary conditions $B_{j,\theta_j}, j=0,1$ we have
\begin{equation}\label{35-c}
\frac{d}{d\eta}\bigl|_{\eta_0}\log \detz H_\eta(\theta_0, \theta_1)=\frac{d}{d\eta}\bigl|_{\eta_0}\log W(\psi_\eta, \varphi_\eta).
\end{equation} 

\textup{2. } Let $\nu\ge 0$ and let $\eta\mapsto V_\eta\in \sV_\nu$ be differentiable 
(recall from Def. \plref{def:FunctionSpaces} that $\sV_\nu$ is
naturally a Fr\'echet space). Let $H_\eta=l_\nu+X^{-1} V_\eta$, $0\le \theta<\pi$. If $H_{\eta_0}(0,\theta_0)$
is invertible then $\eta\mapsto \log\detz H_{\eta}(0,\theta)$ is differentiable at $\eta_0$ and formula \eqref{35-c}
holds accordingly.
\end{theorem}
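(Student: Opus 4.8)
The plan is to run the scheme of \cite{BFK:DEB} and \cite{Les:DRS}: differentiate the contour representation \eqref{eq:ZetaPrimeZero} of $\log\detz H_\eta(\theta_0,\theta_1)$ in $\eta$, recognise the $\eta$-derivative of the resolvent trace as a $z$-derivative, integrate by parts along $\Gamma$, and read off the surviving boundary term from the Green's function at $z=0$. Throughout write $H_\eta$ for $H_\eta(\theta_0,\theta_1)$ and, at the fixed $\eta_0$, set $\dot u:=\partial_\eta u|_{\eta_0}$; in particular $\dot q:=\partial_\eta q_\eta|_{\eta_0}$ is the multiplication operator $\dot W_{1,\eta_0}+\dot W_{2,\eta_0}\in L^\infty[0,1]+L^2_\comp(0,1]$ in case~1, resp.\ $X^{-1}\dot V_{\eta_0}$ with $\dot V_{\eta_0}\in\sV_\nu$ in case~2. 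Since $V_\eta=V+XW_\eta\in\sVdet$ by Lemma~\ref{l:DetClassCriterion}, Theorem~\ref{t:TraceAsymptotics} (resp.\ Theorem~\ref{5-b-t} in case~2) applies to every $H_\eta$, so the zeta-determinant is defined; as $H_{\eta_0}$ is invertible the contour $\Gamma$ may be taken to pass through $0$, and $H_\eta$ stays invertible for $\eta$ near $\eta_0$ by continuity of the spectrum, hence $W(\psi_{\eta_0},\varphi_{\eta_0})\neq0$.

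First I would establish, for $z$ on $\Gamma$, the pointwise identity
\begin{equation}\label{eq:plan-dz}
 \partial_\eta\Tr\bigl(H_\eta+z\bigr)^{-1}\big|_{\eta_0}=\frac{d}{dz}F(z),\qquad F(z):=\Tr\bigl(\dot q\,(H_{\eta_0}+z)^{-1}\bigr).
\end{equation}
This comes from the second resolvent identity together with the assumed differentiability of $\eta\mapsto W_\eta$ (resp.\ $\eta\mapsto V_\eta$), using that $\dot q\,(H_{\eta_0}+z)^{-1}$ is bounded --- in case~1 by Lemma~\ref{l:InterpolationInequality}, in case~2 by the Hilbert--Schmidt bound of Proposition~\ref{p:PerturbationEstimate} applied to $|\dot V_{\eta_0}|^{1/2}\in\sV_\nu$ --- that $(H_{\eta_0}+z)^{-1}$ is trace class, cyclicity of the trace, and $\tfrac{d}{dz}(H+z)^{-1}=-(H+z)^{-2}$. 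The key technical step is then to differentiate \eqref{eq:ZetaPrimeZero} under the regularized integral: one must show that the difference quotients $\tfrac{1}{\eta-\eta_0}[\Tr(H_\eta+z)^{-1}-\Tr(H_{\eta_0}+z)^{-1}]$ converge to \eqref{eq:plan-dz} with a bound that is uniform for $\eta$ near $\eta_0$ and regularized-integrable along $\Gamma$. This is obtained by running the Neumann-series and complex-interpolation estimates behind Lemma~\ref{l:InterpolationInequality} (resp.\ Proposition~\ref{p:PerturbationEstimate}, Theorem~\ref{5-b-t}) with constants depending continuously on $\eta$; the low-regularity hypotheses enter only through fixed $L^1$- resp.\ $\sV_\nu$-norms of $V$ and of the family $W_\eta$, which is exactly what makes the uniformity available.

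Granting this, $\frac{d}{d\eta}|_{\eta_0}\log\detz H_\eta=-\regint_\Gamma F'(z)\,dz$. Now $F$ extends holomorphically across the arc of $\Gamma$ near $0$, and by the same estimates together with $\|(H_{\eta_0}+z)^{-1}\|_{\tr}=O(|z|^{-1/2})$ (cf.\ \eqref{eq:DetClass2} and the trace-norm bounds entering \eqref{eq:NeumannSeries}, \eqref{eq:ResolventComparison}) one has $F(z)=O(|z|^{-1/2})$ as $z\to\infty$ along $\Gamma$, hence $F(z)\to0$ there, while $F'(z)=O(|z|^{-3/2})$ is $L^1$ at infinity by a Cauchy estimate; therefore the fundamental-theorem-of-calculus rule for regularized integrals gives $\regint_\Gamma F'(z)\,dz=\lim_{z\to\infty,\,\Gamma}F(z)-F(0)=-F(0)$. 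Consequently
\begin{equation}\label{eq:plan-F0}
\begin{split}
 \frac{d}{d\eta}\Big|_{\eta_0}\log\detz H_\eta&=F(0)=\int_0^1\dot q(x)\,G_{\eta_0}(x,x;0)\,dx\\
 &=\frac{1}{W(\psi_{\eta_0},\varphi_{\eta_0})}\int_0^1\dot q(x)\,\varphi_{\eta_0}(x)\psi_{\eta_0}(x)\,dx,
\end{split}
\end{equation}
where I use that the Green's function of $H_{\eta_0}$ is $G_{\eta_0}(x,y;0)=W(\psi_{\eta_0},\varphi_{\eta_0})^{-1}\varphi_{\eta_0}(\min(x,y))\psi_{\eta_0}(\max(x,y))$; the right-hand side is homogeneous of degree $0$ in each of $\varphi_{\eta_0},\psi_{\eta_0}$, so the precise normalisations are immaterial here. (Integrability of the last integral near $0$ is the same computation that gives $|G_{\eta_0}(x,x;0)|\in L^1$ there.)

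It remains to recognise that integral as $\frac{d}{d\eta}|_{\eta_0}W(\psi_\eta,\varphi_\eta)$. For this I would use that, relative to the fixed family $g_{j,\eta}$, one has $\varphi_\eta=\alpha g_{1,\eta}+\beta g_{2,\eta}$ with $\alpha,\beta$ \emph{independent} of $\eta$ (determined by $\theta_0$ and the normalisation at $0$), so $\dot\varphi_{\eta_0}=\alpha\dot g_{1,\eta_0}+\beta\dot g_{2,\eta_0}$ carries the decay of Lemmas~\ref{l:ML1003061} and \ref{2-c-t}. Differentiating $H_\eta\varphi_\eta=0$ in $\eta$ gives $-\dot\varphi_{\eta_0}''+q_{\eta_0}\dot\varphi_{\eta_0}=-\dot q\,\varphi_{\eta_0}$, whence $\tfrac{d}{dx}W(\psi_{\eta_0},\dot\varphi_{\eta_0})=\dot q\,\psi_{\eta_0}\varphi_{\eta_0}$ and likewise $\tfrac{d}{dx}W(\dot\psi_{\eta_0},\varphi_{\eta_0})=-\dot q\,\psi_{\eta_0}\varphi_{\eta_0}$; so $\frac{d}{d\eta}|_{\eta_0}W(\psi_\eta,\varphi_\eta)=W(\dot\psi_{\eta_0},\varphi_{\eta_0})(x)+W(\psi_{\eta_0},\dot\varphi_{\eta_0})(x)$ is independent of $x$. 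As $x\to1$: $\psi_\eta$ is normalised at the \emph{regular} endpoint $1$, so $\psi_\eta(1),\psi_\eta'(1)$ do not depend on $\eta$, giving $\dot\psi_{\eta_0}(1)=\dot\psi_{\eta_0}'(1)=0$ and $W(\dot\psi_{\eta_0},\varphi_{\eta_0})(1)=0$; as $x\to0$, writing $\psi_{\eta_0}$ near $0$ as a combination of $g_{1,\eta_0},g_{2,\eta_0}$ and invoking Corollary~\ref{cor:AsympWronskian} gives $W(\psi_{\eta_0},\dot\varphi_{\eta_0})(0{+})=0$. Hence $\frac{d}{d\eta}|_{\eta_0}W(\psi_\eta,\varphi_\eta)=W(\psi_{\eta_0},\dot\varphi_{\eta_0})(1)=\int_0^1\dot q\,\psi_{\eta_0}\varphi_{\eta_0}\,dx$, and dividing by $W(\psi_{\eta_0},\varphi_{\eta_0})$ in \eqref{eq:plan-F0} yields \eqref{35-c}. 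Part~2 is identical once Lemmas~\ref{l:ML1003061}, \ref{2-c-t}, Corollary~\ref{cor:AsympWronskian} and Lemma~\ref{l:InterpolationInequality} are replaced by their counterparts for the singular variation $\eta\mapsto V_\eta\in\sV_\nu$ with Dirichlet condition at $0$: the $\eta$-differentiability of $\phi_\eta=(I-K_\nu V_\eta)^{-1}K_\nu V_\eta\one$ in the function spaces of Section~\ref{sec:FunSys}, the resulting vanishing of the relevant Wronskians at $0$, and the trace bound $\|X^{-1}\dot V_{\eta_0}(H_{\eta_0}(0,\theta)+z)^{-1}\|_{\tr}=O(|z|^{-1/2})$ from Proposition~\ref{p:PerturbationEstimate}; compare \cite[Prop.~3.4]{Les:DRS}. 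The main obstacle throughout is honest bookkeeping of these low-regularity estimates --- the uniformity in $\eta$ needed for differentiation under $\regint_\Gamma$ in case~1, and control of the genuinely singular factor $X^{-1}\dot V_{\eta_0}$ in case~2 --- everything else being the mechanical Wronskian computation above.
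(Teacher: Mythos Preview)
Your proposal is correct and follows essentially the same route as the paper: differentiate the contour integral \eqref{eq:ZetaPrimeZero} under the sign (justified in case~1 by the $O(|z|^{-7/6})$ bound of \eqref{eq:ResolventComparison}, in case~2 by Proposition~\ref{p:PerturbationEstimate}), rewrite the result as a total $z$-derivative, apply the fundamental theorem of calculus along $\Gamma$ to get $\Tr(\dot q\,H_{\eta_0}^{-1})$, and then carry out the Wronskian computation using Corollary~\ref{cor:AsympWronskian} at $x=0$ and the normalisation of $\psi_\eta$ at $x=1$. The only point you leave implicit that the paper spells out is the domain stability $H_\eta(\theta_0,\theta_1)=H_{\eta_0}(\theta_0,\theta_1)+W_\eta-W_{\eta_0}$, which follows from Lemmas~\ref{l:ML1003061} and~\ref{2-c-t} (the differences $g_{j,\eta}-g_{j,\eta_0}$ decay fast enough that the functionals $c_1,c_2$ on $\dom(H_{\eta,\max})$ are $\eta$-independent); and the paper bounds $F'(z)$ directly rather than via your Cauchy estimate, but the outcome is the same.
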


\begin{proof}
1. Let $0<\nu<1$. By Lemma \plref{l:ML1003061} and Lemma \plref{2-c-t} we have 
\begin{equation}
\begin{split}
   g_{1,\eta}(x)-g_{1,\eta_0}(x)&=O(x^{5/2}),\\
    g_{2,\eta}(x)-g_{2,\eta_0}(x)&=O(x^{3/2}),
\end{split}
\quad x\to 0.
\end{equation}
Hence by Theorem \plref{thm:LaplacianMaximalGeneral} the domain of $H_{\eta,\max}$ as well as the functionals
$c_1, c_2$ are independent of $\eta$. Thus we have indeed
$H_\eta(\theta_0,\theta_1)=H_{\eta_0}(\theta_0,\theta_1)+ W_{\eta}-W_{\eta_0}$. The proof of Lemma \plref{l:InterpolationInequality}
shows that $W_\eta$ is $H_{\eta_0}(\theta_0,\theta_1)$-bounded and the assumptions on the map $\eta\mapsto W_{\eta}$ then
imply that $\eta\mapsto H_\eta(\theta_0,\theta_1)$ is a graph continuous family of self-adjoint operators; in particular
there is an $\eps>0$ such that $H_\eta(\theta_0,\theta_1)$ is invertible for $|\eta-\eta_0|<\eps$. From now on we 
assume $|\eta-\eta_0|<\eps$. 

From the estimate \eqref{eq:ResolventComparison}  we conclude that 
\begin{equation}
\begin{split}
    \log\detz &H_\eta(\theta_0,\theta_1) -    \log\detz H_{\eta_0}(\theta_0,\theta_1)\\
     &=-\int_\Gamma \Tr\bigl((H_\eta(\theta_0,\theta_1)+z)\ii - (H_{\eta_0}(\theta_0,\theta_1)+z)\ii\bigr) dz
\end{split}
\end{equation}
where the integrand on the right is absolutely summable as it is $O(|z|^{-7/6})$, $z\to \infty$.

Furthermore, according to our assumptions on $W_\eta$ we have
\begin{equation}
\begin{split}
     \frac{d}{d\eta} &\Bigl((H_\eta(\theta_0,\theta_1)+z)\ii - (H_{\eta_0}(\theta_0,\theta_1)+z)\ii \Bigr)\\
        &   =-(H_\eta(\theta_0,\theta_1)+z)\ii (\partial_\eta W_\eta) (H_\eta(\theta_0,\theta_1)+z)\ii.
\end{split}
\end{equation}
By \eqref{eq:ResolventComparison} the trace norm of the right hand side is $O(|z|^{-7/6})$ where the
$O-$constant is locally independent of $\eta$. 
By the Dominated Convergence Theorem we may thus differentiate under the integral and find
\begin{equation}
\begin{split}\label{eq:DetVariation}
\frac{d}{d\eta}&\log \detz  H_\eta(\theta_0, \theta_1)\\
     =&\int_\Gamma \Tr\bigl((H_\eta(\theta_0,\theta_1)+z)\ii (\partial_\eta W_\eta) 
                     (H_\eta(\theta_0,\theta_1)+z)\ii\bigr) dz\\
    = &-\int_\Gamma  \frac{d}{dz} \Tr\bigl( (\partial_{\eta} W_{\eta}) (H_\eta(\theta_0,\theta_1)+z)\ii \bigr)dz\\
    = & \Tr \bigl( (\partial_\eta W_{\eta}\bigr) H_\eta(\theta_0,\theta_1)\ii \bigr).
\end{split}
\end{equation}
Having established this identity we can now proceed as
in the proof of \cite[Prop. 3.4]{Les:DRS}, making essential use of
Corollary \plref{cor:AsympWronskian}.

The kernel $G_\eta(x,y)$ of $H_\eta(\theta_0,\theta_1)$ is given by
\begin{equation}\label{39-c}
G_\eta(x,y)=
W(\psi_\eta, \varphi_\eta)^{-1}\varphi_\eta(x) \psi_\eta(y), \quad x\leq y,
\end{equation}
$W(\psi_\eta, \varphi_\eta)\neq 0$ since $H_\eta(\theta_0,\theta_1)$ is invertible by assumption. 
Since $g_{j,\eta}$ are differentiable in $\eta$ (Lemma \plref{l:ML1003061} and Lemma \plref{2-c-t})
so are $\varphi_\eta, \psi_\eta$. In fact, the normalization condition implies
\begin{equation}
\begin{split}
        \varphi_\eta&=\begin{cases} - \cot \theta_0 \cdot g_{1,\eta}+ g_{2,\eta}, &\textup{if } 0<\theta_0<\pi, \\
                                       g_{1,\eta},  & \textup{if }\theta_0=0.
                      \end{cases}\\
        \psi_\eta   &= a_\eta g_{1,\eta} + b_\eta g_{2,\eta},
\end{split}
\end{equation}
where $a_\eta, b_\eta$ depend differentiably on $\eta$. 
Differentiating the formula $ \varphi''_{\theta, \eta}=q_{\eta}\varphi_{\theta, \eta}$ 
with respect to $\eta$ gives
\begin{equation}
\partial_{\eta} \varphi''_\eta
   =(\partial_{\eta} q_{\eta}) \varphi_\eta +q_{\eta}\partial_{\eta} \varphi_\eta 
   =(\partial_{\eta} W_{\eta}) \varphi_\eta+ q_{\eta}\partial_{\eta} \varphi_\eta, \label{40-c}
\end{equation}
and hence 
\begin{equation}
\begin{split}
(\partial_{\eta}W_{\eta}) \varphi_\eta \psi_\eta
=&(\partial_{\eta} \varphi''_\eta) \psi_\eta - q_{\eta}(\partial_{\eta} \varphi_\eta) \psi_\eta 
=(\partial_{\eta} \varphi_\eta)'' \psi_\eta - (\partial_{\eta} \varphi_\eta) \psi''_\eta \\
=&\frac{d}{dx}\bigl((\partial_{\eta} \varphi_\eta)' \psi_\eta-(\partial_{\eta} \varphi_\eta) 
     \psi'_\eta \bigr) 
=\frac{d}{dx} W(\psi_\eta, \partial_{\eta}\varphi_\eta). \label{50-c}
\end{split}
\end{equation}
Thus we find
\[
\begin{split}
\frac{d}{d\eta}&\log\detz H_\eta(\theta_0,\theta_1)
   =\,\Tr \bigl((\partial_\eta W_\eta ) H_\eta(\theta_0,\theta_1)\ii \bigr) \\
   =&\, W(\psi_\eta, \varphi_\eta)\ii
          \int_0^1 \frac{d}{dx} W(\psi_\eta, \partial_\eta \varphi_\eta)(x) dx \\
   =&\, W(\psi_\eta, \varphi_\eta)\ii
    \bigl( W(\psi_\eta, \partial_\eta \varphi_\eta)(1) - 
    \lim_{x\to 0+}  W(\psi_\eta, \partial_\eta\varphi_\eta)(x)\bigr). 
\end{split}
\]
By Corollary \ref{cor:AsympWronskian} we have 
\begin{equation}\label{60-c}
\lim\limits_{x\to 0+}W(\psi_\eta, \partial_\eta\varphi_\eta)(x)=0.
\end{equation}
On the other hand 
\begin{equation}\label{61-c}
W(\partial_\eta \psi_\eta, \varphi_\eta)(1)=0,
\end{equation}
since $\psi_\eta$ is normalized with $\psi_\eta(1)=0$ and $\psi'_\eta(1)=-1$ 
in case of Dirichlet boundary conditions and 
with $\psi_\eta(1)=1$ in case of generalized Neumann boundary conditions.

Note that in contrast to \cite{Les:DRS}, the proof of relation \eqref{60-c} 
requires a careful asymptotic analysis of the fundamental solutions
as summarized in Corollary \ref{cor:AsympWronskian}.

In view of \eqref{60-c} and \eqref{61-c} we arrive at
\[
\begin{split}
\frac{d}{d\eta}\log \detz H_\eta(\theta_0,\theta_1)&=
  \, W(\psi_\eta, \varphi_\eta)\ii \bigl(
 W(\psi_\eta, \partial_\eta \varphi_\eta)(1)+ W(\partial_\eta \psi_\eta,
 \varphi_\eta)(1)\bigr) \\
  &=\, W(\psi_\eta, \varphi_\eta)^{-1} \frac{d}{d\eta} W(\psi_\eta, \varphi_\eta)
  =\,\frac{d}{d\eta}\log W(\psi_\eta, \varphi_\eta)
\end{split}
\]
and the proof of 1. is complete.

2. For the proof of 2. we only have to note that by Proposition \plref{p:PerturbationEstimate} we can estimate
the trace norm of $(H_\eta(0,\theta)+z)\ii (\partial_\eta V_\eta) (H_\eta(0,\theta)+z)\ii$
by $C |z|\ii R(z) $ where $R(z)$ satisfies \eqref{eq:IntCon-b} and the constant $C$ is locally
independent of $\eta$. Thus we conclude the variation formula \eqref{eq:DetVariation}. The remaining
arguments are then completely analogous to the proof of 1.
\end{proof}

\pagebreak[3]
\begin{remark} One can also prove a variation formula for the dependence of the zeta-determinant
on the boundary conditions $\theta_0,\theta_1$. For the variation of $\theta_1$ at the regular
end this is standard, see e.g. \cite[Prop. 3.6]{Les:DRS}. For the variation of $\theta_0$ the proof
is much more delicate. Due to our approach via factorizable operators the result is not needed and
therefore omitted. However, the factorization method does not extend to matrix valued potentials
in a straightforward way. So, if one would like to generalize the results of this paper to matrix valued
potentials with regular singularities then one would probably need to establish a formula for the
variation of the zeta-determinant under the variation of the boundary conditions at the singular end.
\end{remark}
\subsection{Proof of the Main Theorem \plref{thm:Main}}
We are now finally ready to prove the Main Theorem \plref{thm:Main}. 
As in \cite[Sec. 4]{Les:DRS} we first note that \eqref{eq:Main} is obviously
true if $H(\theta_0,\theta_1)$ is not invertible. Furthermore, if $\varphi(\cdot,z),\psi(\cdot,z)$
denote the normalized solutions for $H(\theta_0,\theta_1)+z$ it follows from Theorem
\plref{thm:PotentialVariation} (surely, for $V\in \sVdet$ the family $z\mapsto V+ z X$ satisfies
the standing assumptions of the beginning of this section)
that $\detz (H(\theta_0,\theta_1)+z)$ and $W(\psi(\cdot,z),\varphi(\cdot,z))$ are holomorphic
functions in $\C$ with the same logarithmic derivative. Hence it suffices to
prove the formula for $H(\theta_0,\theta_1)+z$ for one $z\in\C$.

Let us now first assume that $\theta_0=0,$ i.e. at the left end point we have the Dirichlet boundary condition. 
Except for the low regularity assumptions on the potential this case was treated in
\cite{Les:DRS}. From loc. cit. we will only use the result that 
the formula \eqref{eq:Main} holds for $\theta_0=0$ and $V(x)=x z$, i.e.
for the operator $l_\nu(0,\theta_1)+z$. To reduce the claim
to this case we consider $V_\eta:=\eta V$. 
By Proposition \plref{p:PerturbationEstimate}, $L_\nu:=l_\nu(0,\theta_1)$ is self--adjoint
and bounded below and from \eqref{eq:PerturbationEstimate} we infer that
$H_\eta(0,\theta_1)+z:=L_\nu+\eta X^{-1}V+z$ is invertible for $0\le \eta\le 1$ and
$z\ge z_0$.  Hence we may apply the variation result Theorem \plref{thm:PotentialVariation}, 2.
and we are reduced to the case $V=0$ and thus to \cite{Les:DRS}.

Next we consider the case $0<\theta_0<1$. As noted before this necessarily
means $\nu<1$, since
for $\nu\ge 1$ the left end point is in the limit point case. The case $\nu=0$ is beyond the
scope of this paper and so we assume $0<\nu<1$. By Proposition \plref{p:FactorH} we have
$H(\theta_0,\theta_1)=D_1D_2+W$ with $W\in L^2_\comp(0,1]$ and
$D_1=d_{1,ra}, D_2=d_{2,ar}$ if $\theta_1=0,$ and $D_1=d_{1,rr}, D_2=d_{2,aa}$ 
if $\theta_1>0$. Putting $W_\eta=\eta W, 0\le \eta\le 1$, we infer from
Lemma \plref{l:InterpolationInequality} that $D_1D_2+\eta W +z$ is invertible
for $0\le \eta\le 1$ and $z\ge z_0$, thus invoking again the variation result Theorem \plref{thm:PotentialVariation}, 1. we are reduced to prove the formula \eqref{eq:Main} for the operator $D_1 D_2+z$.
Note that $D_2 D_1$ has the Dirichlet boundary condition at $0$ and hence \eqref{eq:Main}
holds for $D_2 D_1+z$ by the first part of this proof. 
We now look at the cases already discussed in the proof of
Proposition \plref{p:CommutatorWronskian}. We use the notation from loc. cit., in particular
$\varphi_z,\psi_z$ denote a pair of normalized solutions for $(D_2D_1+z)g=0$ and 
$\tilde \varphi_z=\frac{1}{2-2\nu}d_1\varphi_z,\tilde\psi_z=-d_1\psi_z$ the corresponding
pair of normalized solutions for $(D_1D_2+z)g=0$.

Denote by $\mu_0,\mu_1$ the invariants defined in 
\eqref{eq:DefMu0}, \eqref{eq:DefMu1} of the boundary conditions for $D_1D_2$.
Denote by $\mu_j'$ the corresponding invariants for $D_2 D_1$.

\textup{Case I: $0<\theta_1<\pi$.} The kernel of $D_1 D_2$ is one--dimensional and 
$D_2 D_1$ is invertible. We have $\mu_0=-\nu, \mu_1=-1/2$. $\mu_0'=1-\nu, \mu_1'=1/2$. 
Thus, using the proven formula \eqref{eq:Main} for $D_2 D_1$ and Proposition \plref{p:CommutatorWronskian}
\begin{equation}
\begin{split}
    \detz(D_1D_2+z)&=z\detz(D_2D_1+z)\\
     &=\frac{z\pi}{2^{\mu_0'+\mu_1'}\Gamma(\mu_0'+1)\Gamma(\mu_1'+1)} W(\psi_z,\varphi_z)\\
     &= \frac{ 2 (1-\nu)\pi}{2^{\mu_0+\mu_1+2} \Gamma(\mu_0+2)\Gamma(\mu_1+2)}W(\tilde \psi_z,\tilde \varphi_z) \\
     &= \frac{\pi}{2^{\mu_0+\mu_1} \Gamma(\mu_0+1)\Gamma(\mu_1+1)}W(\tilde \psi_z,\tilde \varphi_z).
\end{split}
\end{equation}

\textup{Case II: $\theta_1=0$.} Here $D_1D_2$ and $D_2D_1$ are both invertible and
we have $\mu_0=-\nu,\mu_1=1/2$. $\mu_0'=1-\nu, \mu_1'=-1/2$. 
Thus
\begin{equation}
\begin{split}
    \detz(D_1D_2+z)&=\detz(D_2D_1+z)\\
      &=\frac{\pi}{2^{\mu_0'+\mu_1'}\Gamma(\mu_0'+1)\Gamma(\mu_1'+1)} W(\psi_z,\varphi_z)\\
      &= \frac{2 (1-\nu)\pi}{2^{\mu_0+\mu_1} \Gamma(\mu_0+2)\Gamma(\mu_1)} W(\tilde \psi_z,\tilde \varphi_z) \\
      &= \frac{\pi}{2^{\mu_0+\mu_1} \Gamma(\mu_0+1)\Gamma(\mu_1+1)} W(\tilde \psi_z,\tilde \varphi_z).
\end{split}
\end{equation}
The proof is complete.\hfill\qed

\begin{remark}\label{r:Concluding} We conclude by mentioning that Theorem \plref{thm:Main} can be extended to potentials with
regular singularities at both end points (and otherwise having the same regularity properties as
the class $\sVdet$). The formula \eqref{eq:Main} remains the same. 
For the proof one first employs the factorization method we used here to arrange that, say at the left
end point, one has Dirichlet boundary conditions. For this boundary condition a variation formula for the
variation of the singular potential was proved in \cite[Prop. 3.7]{Les:DRS}. This variation formula is
still valid for our class of potentials and it allows to deform the parameter $\nu$ to $\nu=1/2$. Now one
is basically in the situation with one regular end point and one singular end point and Theorem 
\plref{thm:Main} can be applied.
The details are left to the reader.
\end{remark}

\bibliography{localbib.bib}
\bibliographystyle{amsalpha-lmp}

\end{document}